\newtheorem{Rem}{Remark}
\newtheorem{sats}{Theorem}
\newtheorem{prop}{Proposition}
\newtheorem{lem}{Lemma}
\newtheorem{kor}{Corollary}
\newcommand{\banm}{\begin{anm}}
\newcommand{\eanm}{\end{anm}}
\begin{document}

\title
{The Dirichlet problem \\
for  non-divergence parabolic equations\\
with discontinuous in time coefficients. \\
}
\author
{Vladimir Kozlov, Alexander Nazarov}

%\address{Vladimir Kozlov \hfill\break
%Department of Mathematics, University of Link\"oping, SE-581 83
%Link\"oping, Sweden\hfill\break Alexander Nazarov\hfill\break
%Mathematics and Mechanics Department, St.-Petersburg State
%University, Universitetskii pr. 28, St.-Petersburg, 198504, Russia
% }
%\email{vlkoz@mai.liu.se, al.il.nazarov@gmail.com}

\date{}
%\thanks{}

%\begin{abstract}We consider the Dirichlet problem for
%non-divergence parabolic equation with discontinuous in $t$
%$coefficients in a half-space. The main result is weighted coercive
%$estimates of solutions in anisotropic Sobolev spaces. We give an
%$application of this result to linear and quasi-linear parabolic
%$equations in a bounded domain. In particular, if the boundary is of
%$class $C^{1,\delta}$, $\delta\in [0,1]$, then we present a coercive
%$estimate of solutions in weighted anisotropic Sobolev spaces, where
%$the weight is a power of the distance to the boundary.
%$\end{abstract}

\maketitle

\centerline{Dedicated to V.A. Solonnikov on the occasion of his 75th
jubilee}

\section{Introduction}

In 2001  N.Krylov observed in \cite{Kr2} and \cite{Kr} that for
non-divergence parabolic equations  coercive estimates for
solutions can be proved even when the leading coefficients are
only measurable functions with respect to $t$. Namely, he
considered the equation
\begin{equation}\label{Jan1}
({\cal L}_0u)(x,t)\equiv\partial_tu(x,t)-
a^{ij}(t)D_iD_ju(x,t)=f(x,t)
\end{equation}
in $\mathbb R^n\times\mathbb R$, where $D_j=\partial/\partial x_j$
and $a^{ij}$ are measurable real valued functions of $t$
satisfying $a^{ij}=a^{ji}$ and
\begin{equation}\label{Jan2}
\nu|\xi|^2\le a^{ij}\xi_i\xi_j\le \nu^{-1}|\xi|^2, \qquad
\xi\in{\mathbb R}^n, \quad \nu=const>0.
\end{equation}
He proved that for $f\in L_{p,q}(\mathbb R^{n}\times\mathbb R)$
with $1<p,q<\infty$, where  $L_{p,q}(\Omega\times\mathbb R)$ is
the space of functions on $\Omega\times\mathbb R$  with finite
norm
\begin{equation}\label{Jan3}
\|f\|_{p,q}=\Big (\int\limits_{\mathbb R}\Big
(\int\limits_{\Omega} |f(x,t)|^pdx\Big )^{q/p}dt\Big )^{1/q},
\end{equation}
equation (\ref{Jan1}) has a unique solution such that
$\partial_tu$ and $D_iD_ju$ belong to $L_{p,q}(\mathbb
R^n\times\mathbb R)$ and
\begin{equation}\label{Jan4}
\|\partial_tu\|_{p,q}+\sum_{ij} \|D_iD_ju\|_{p,q}\leq
C\|f\|_{p,q}\;.
\end{equation}

Let us turn to  the Dirichlet boundary value problem in the
half-space $\mathbb R_+^{n}=\{x=(x',x_n)\in\mathbb R^n:x_n>0\}$.
Now equation (\ref{Jan1}) is satisfied for $x_n>0$ and $u=0$ for
$x_n=0$. The following weighted coercive  estimate
\begin{equation}\label{Jan5}
\|x_n^\mu\partial_tu\|_{p,q}+\sum_{ij} \|x_n^\mu
D_iD_ju\|_{p,q}\leq C\|x_n^\mu f\|_{p,q}\;,
\end{equation}
was proved in \cite{Kr2}, where $1<p,q<\infty$ and
$\mu\in\,(1-1/p,2-1/p)$. Furthermore from \cite{DoKr} and
\cite{Kim}, it follows that the solution of the Dirichlet problem
to (\ref{Jan1}) satisfies estimate (\ref{Jan4}) for $\mu=0$ and $p=q$,
$p\in\,(1,\infty)$.

One of the main results of this paper is the proof of  estimate
(\ref{Jan5}) for solutions of the Dirichlet problem to (\ref{Jan1})
 for arbitrary $p$ and $q$ from $(1,\infty)$ and for $\mu$
satisfying
\begin{equation}\label{Mart2a}
-1/p<\mu<2-1/p\,.
\end{equation}
We also prove analogs of estimates (\ref{Jan4}) and (\ref{Jan5}),
where the norm $\|\cdot \|_{p,q}$ is replaced by
$$
|\!|\!|f|\!|\!|_{p,q}=\Big (\int\limits_\Omega\Big
(\int\limits_{\mathbb R} |f(x,t)|^qdt\Big )^{p/q}dx\Big )^{1/p}.
$$
These norms and corresponding spaces, which will be denoted by
$\widetilde L_{p,q}(\Omega\times \mathbb R)$, play important role
in the theory of quasilinear  non-divergence parabolic equation
(see \cite{Na}).

In  Sect. \ref{solv} we give some applications of our results to
the Dirichlet problem for linear and quasi-linear non-divergence
parabolic equations with discontinuous in time coefficients in
cylinders $\Omega\times (0,T)$, where $\Omega$ is a bounded domain in $\mathbb
 R^n$. We prove solvability results in weighted $L_{p,q}$ and $\widetilde
 L_{p,q}$ spaces, where the weight is a power of the distance to the boundary of $\Omega$.
The smoothness of the boundary is characterized by smoothness of
local isomorphisms in neighborhoods of boundary points, which
flatten the boundary. In particular, if the boundary is of the class
${\cal C}^{1,\delta}$ with $\delta\in [0,1]$, then for solutions to the
linear problem (\ref{Jan1}) in $\Omega\times (0,T)$, where the
coefficients $a^{ij}$ may depend on $x$ (namely, $a^{ij}\in
C(\Omega\to L^\infty (0,T))$), with zero initial and Dirichlet
boundary conditions the following coercive estimate is proved in
Theorem \ref{linear}:
\begin{eqnarray*}
&&\|(\widehat d(x))^\mu\partial_tu\|_{p,q}+\sum_{ij} \|(\widehat
d(x))^\mu D_iD_ju\|_{p,q}\leq
C\|(\widehat d(x))^\mu f\|_{p,q},\\
&&|\!|\!|(\widehat d(x))^\mu\partial_tu|\!|\!|_{p,q}+\sum_{ij}
|\!|\!|(\widehat d(x))^\mu D_iD_ju|\!|\!|_{p,q}\leq
C|\!|\!|(\widehat d(x))^\mu f|\!|\!|_{p,q},
\end{eqnarray*}
where $\mu$, $p$, $q$ and $\delta$ satisfy $1<p,q<\infty$, \
$1-\delta-\frac{1}{p}<\mu<2-\frac{1}{p}$. Here we use the notation
${\cal C}^{1,0}$ for boundaries of the class ${\cal C}^1$. For $p=q$ and
$\delta=0$ this estimated was proved in \cite{KyKr}.

In order to prove estimate (\ref{Jan5}) we use an approach based
on the study  of the Green function. We obtain point-wise
estimates for the Green function for the Dirichlet problem in the
half-space and its derivatives, see Sect.\ref{Rn+1}. The main
ingredient in the proof is the decomposition of the kernel
$$ \frac {x_n^{\mu}}{y_n^{\mu}}\, D_{x_i}D_{x_j}\Gamma^{\cal D}(x,y;t, s)$$
into the sum of truncated singular kernel $\chi_{\{x_n>\sqrt{t-
s}\}}D_{x_i}D_{x_j}\Gamma(x,y;t,s)$ and the complement kernel, see
Sect.\ref{Rn+2}. Here $\Gamma$ and $\Gamma^{\cal D}$ are the Green
functions for the whole space and for the half-space respectively.
The boundedness of singular operators with truncated kernels in
$L_{p,q}$ and $\widetilde L_{p,q}$ spaces is proved in
Sect.\ref{Rn}. Then, using local estimates for solutions to
parabolic equations in the half-space, we show that the complement
kernels have weak singularities and give estimates of the norms of
corresponding operators in $L_{p,q}$ and $\widetilde L_{p,q}$
spaces. This leads to the proof of (\ref{Jan5}) under condition
(\ref{Mart2a}). Similar decompositions of the Green function  were
used by V.A. Solonnikov in \cite{Sol1} and \cite{Sol}.

\medskip

We shall use the following notation: $x=(x_1,\dots,x_n)=(x',x_n)$
is a point in $\mathbb R^n$; ${\mathbb R}^n_+=\{x \in{\mathbb
R}^n: x_n>0\}$ is a half-space;
$$
Q_R(x^0,t^0)=\{ (x,t): |x-x^0|<R,\; 0<t^0-t<R^2\}
$$
is a cylinder;
$$
Q_R^+(x^0,t^0)=\{ (x,t): |x-x^0|<R,\; x_1>0,\; 0<t^0-t<R^2\}.
$$
The last notation will be used only for $x^0\in\overline{\mathbb
R^n_+}$. We adopt the convention regarding summation from $1$ to
$n$ with respect to repeated indices.  We use the letter $C$ to
denote various positive constants. To indicate that $C$ depends on
some parameter $a$, we sometimes write $C_a$.

\section{The estimates in the whole space}\label{Rn}

Let us consider  equation (\ref{Jan1}). Using the Fourier
transform with respect to  $x$ one can obtain the following
representation of solution through the right-hand side:
\begin{equation}\label{TTN1}
u(x,t)=\int\limits_{-\infty}^t\int\limits_{{\mathbb R}^n}
\Gamma(x,y;t,s) f(y,s)\ dy\,ds,
\end{equation}
where $\Gamma$ is the Green function of the operator ${\cal L}_0$
 given by
\begin{equation*}
\Gamma(x,y;t,s)= \frac { \det\big(\int_s^t
A(\tau)d\tau\big)^{-\frac 12}} {(4\pi)^{\frac n2}}    \exp
\bigg(-\frac {\Big(\big(\int_s^t A(\tau) d\tau\big)^{-1}
(x-y),(x-y)\Big)}4\bigg)
\end{equation*}
for $t>s$ and $0$ otherwise.  Here  $A(t)$ is  the matrix $\{
a^{ij}(t)\}_{i,j=1}^n$. The above representation implies, in
particular,
 the following estimates for $\Gamma$.

\begin{prop} Let $\alpha$ and $\beta$ be two arbitrary multi-indices. Then
\begin{equation}\label{May0}
|D_x^\alpha D_y^\beta \Gamma(x,y;t,s)|\le C\,(t-s)^{-\frac
{n+|\alpha|+|\beta|}2}
\,\exp\left(-\frac{\sigma|x-y|^2}{t-s}\right)
\end{equation}
and
\begin{equation}\label{May0'}
|\partial_tD_x^\alpha D_y^\beta \Gamma(x,y;t,s)|\le
C\,(t-s)^{-\frac {n+|\alpha|+|\beta|}2-1}
\,\exp\left(-\frac{\sigma|x-y|^2}{t-s}\right),
\end{equation}
for $x,y\in\mathbb R^n$ and $s<t$. Here $\sigma$ depends only on
the ellipticity constant $\nu$ and $C$ may depend on $\nu$,
$\alpha$ and $\beta$.
\end{prop}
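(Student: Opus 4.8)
The plan is to derive the estimates directly from the explicit Gaussian formula for $\Gamma$, reducing everything to elementary facts about the matrix $B(t,s):=\int_s^t A(\tau)\,d\tau$ and about derivatives of Gaussians. First I would record the key consequence of the ellipticity assumption \eqref{Jan2}: integrating the pointwise bound $\nu I\le A(\tau)\le\nu^{-1}I$ over $[s,t]$ gives $\nu(t-s)I\le B(t,s)\le\nu^{-1}(t-s)I$ as symmetric matrices. Hence $B(t,s)$ is invertible for $t>s$, its eigenvalues lie in $[\nu(t-s),\nu^{-1}(t-s)]$, and therefore
\begin{equation*}
\nu(t-s)I\le B(t,s)^{-1}\cdot(t-s)^2\quad\text{is not quite it};
\end{equation*}
more precisely $\nu(t-s)\,|\xi|^2\le (B(t,s)\xi,\xi)$ yields $(B(t,s)^{-1}\eta,\eta)\le \frac{1}{\nu(t-s)}|\eta|^2$ and similarly $(B(t,s)^{-1}\eta,\eta)\ge \nu(t-s)^{-1}|\eta|^2$, while $\det B(t,s)$ lies between $(\nu(t-s))^n$ and $(\nu^{-1}(t-s))^n$. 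This already shows that the scalar prefactor $\det B(t,s)^{-1/2}$ is comparable to $(t-s)^{-n/2}$ and that the exponent $\tfrac14(B(t,s)^{-1}(x-y),(x-y))$ is bounded below by $\tfrac{\nu}{4}\,\tfrac{|x-y|^2}{t-s}$.

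Next I would set $\tau:=t-s$ and $z:=x-y$ and write $\Gamma(x,y;t,s)=\tau^{-n/2}G(\tau,z)$, where, after the substitution $z=\sqrt\tau\,w$, $G$ takes the form $c_n\,\det M(t,s)^{-1/2}\exp\!\big(-\tfrac14(M(t,s)^{-1}w,w)\big)$ with $M(t,s):=\tau^{-1}B(t,s)$ a symmetric matrix whose eigenvalues lie in the fixed compact set $[\nu,\nu^{-1}]\subset(0,\infty)$, independently of $t,s$. Any $x$- or $y$-derivative of $\Gamma$ is, up to sign, a $z$-derivative; by the chain rule $D_z = \tau^{-1/2}D_w$, so $D_x^\alpha D_y^\beta\Gamma = (-1)^{|\beta|}\tau^{-(n+|\alpha|+|\beta|)/2}\,(D_w^{\alpha+\beta}\widetilde G)(M(t,s),w)$, where $\widetilde G(M,w)=c_n\det M^{-1/2}e^{-(M^{-1}w,w)/4}$. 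A derivative of order $k$ in $w$ of $\widetilde G$ is $\widetilde G$ times a polynomial in $w$ (with coefficients depending on entries of $M^{-1}$) of degree $k$; since $M$ ranges over a compact set of uniformly positive-definite matrices, those coefficients are uniformly bounded, and the product of $e^{-(M^{-1}w,w)/4}$ with any fixed polynomial in $w$ is bounded by $C\,e^{-\sigma_0|w|^2}$ for a suitable $\sigma_0$ between $0$ and $\tfrac{\nu}{8}$, say. Translating $w=z/\sqrt\tau$ back gives the factor $\exp(-\sigma_0|x-y|^2/(t-s))$, which is \eqref{May0} with $\sigma=\sigma_0$.

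Finally, for \eqref{May0'} I would differentiate in $t$. Since $B(t,s)$ is Lipschitz in $t$ with $\partial_t B(t,s)=A(t)$ (for a.e.\ $t$), one has $\partial_t M^{-1}$ and $\partial_t\det M^{-1/2}$ controlled by $\tau^{-1}$ times bounded quantities, and differentiating the representation $\Gamma=\tau^{-n/2}\widetilde G(M,z/\sqrt\tau)$ in $t$ produces exactly one extra factor of order $\tau^{-1}$ (either from the explicit $\tau^{-n/2}$, or from $\partial_t$ hitting $M$ through $A(t)$, or from $\partial_t(z/\sqrt\tau)$); in every case the Gaussian structure is preserved and only the polynomial-in-$w$ prefactor changes, so the same bound $C\,e^{-\sigma_0|w|^2}$ applies after slightly shrinking $\sigma_0$. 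Combining with the $|\alpha|$, $|\beta|$ derivatives as above gives the $(t-s)^{-(n+|\alpha|+|\beta|)/2-1}$ bound. The only mildly delicate point is the $t$-differentiation, since $A(t)$ is merely measurable; but the estimates are pointwise in $(x,y)$ and the bound $\|\partial_t B(t,s)\|=\|A(t)\|\le\nu^{-1}$ holds for a.e.\ $t$, which is all that is needed, so this is not a genuine obstacle. The main (still routine) work is bookkeeping the polynomial prefactors and choosing $\sigma$; there is no conceptual difficulty.
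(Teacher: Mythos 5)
Your proof is correct; the paper itself gives no proof of this Proposition (it merely remarks that the estimates ``follow from'' the explicit formula), so your argument is precisely the expected elementary derivation that the authors suppress. The key observations --- that $B(t,s)=\int_s^t A(\tau)\,d\tau$ satisfies $\nu(t-s)I\le B\le\nu^{-1}(t-s)I$, that after the rescaling $w=(x-y)/\sqrt{t-s}$ the rescaled matrix $M=\tau^{-1}B$ lies in a fixed compact set of positive-definite matrices, and that each derivative produces a polynomial prefactor in $w$ that is absorbed by slightly shrinking the exponent --- are all sound, and you correctly note that the only mildly delicate point (measurability of $A(t)$) is harmless because one only needs the a.e.\ bound $\|\partial_t B\|\le\nu^{-1}$.

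One small remark: for (\ref{May0'}) there is a slightly cleaner route that avoids differentiating $\det B^{-1/2}$ and $B^{-1}$ explicitly. Since $\Gamma$ solves $\partial_t\Gamma=a^{ij}(t)D_{x_i}D_{x_j}\Gamma$ for $t>s$, applying $D_x^\alpha D_y^\beta$ gives
$\partial_t D_x^\alpha D_y^\beta\Gamma=a^{ij}(t)\,D_{x_i}D_{x_j}D_x^\alpha D_y^\beta\Gamma$,
and then (\ref{May0'}) follows immediately from (\ref{May0}) with $\alpha$ replaced by $\alpha+e_i+e_j$ together with the a.e.\ bound $|a^{ij}(t)|\le\nu^{-1}$. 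This sidesteps Jacobi's formula and the computation of $\partial_t B^{-1}$, but your direct computation is of course equally valid.
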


In what follows we denote by the same letter the kernel  and the
corresponding integral operator, i.e.
\begin{equation}\label{Feb15a}
({\cal K}h)(x,t)=\int\limits_{-\infty}^t\int\limits_{\mathbb R^n}
{\cal K}(x,y;t,s)h(y,s)\,dyds.
 \end{equation}

In order to prove an analog of estimate (\ref{Jan4}) for
$\widetilde L_{p,q}$ we need the following lemma. We introduce the
kernels ${\mathfrak
G}_{ij}(x,y;t,s)=D_{x_i}D_{x_j}\Gamma(x,y;t,s)$. Thus the notation
${\mathfrak G}_{ij}$ is used both for the kernel and for the
corresponding operator defined by (\ref{Feb15a}).

\begin{lem}\label{weak_1}
Let a function $h$ be supported in the cylinder $|y-y^0|\le\delta$
and satisfy $\int h(y,s)\ dy\equiv 0$ for almost all $s$. Then
\begin{equation}\label{Feb10a}
\int\limits_{|x-y^0|>2\delta}\Vert ({\mathfrak G}_{ij}
h)(x,\cdot)\Vert_q\ dx\le C\,|\!|\!| h|\!|\!|_{1,q},
\end{equation}

\noindent where $C$ does not depend on $\delta$ and $y^0$.
\end{lem}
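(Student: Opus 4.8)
The plan is to exploit the cancellation $\int h(y,s)\,dy\equiv 0$ in the standard Calderón–Zygmund fashion, adapted to the parabolic scaling and to the mixed norm $|\!|\!|\cdot|\!|\!|_{1,q}$. Fix $s$ and write, using the vanishing mean,
\[
({\mathfrak G}_{ij}h)(x,t)=\int\limits_{-\infty}^t\int\limits_{|y-y^0|\le\delta}
\big({\mathfrak G}_{ij}(x,y;t,s)-{\mathfrak G}_{ij}(x,y^0;t,s)\big)\,h(y,s)\,dy\,ds.
\]
By the mean value theorem the difference of kernels is bounded by $\delta\,\sup_{z}|D_y D_{x_i}D_{x_j}\Gamma(x,z;t,s)|$ with $z$ on the segment $[y,y^0]$; since $|x-y^0|>2\delta$ and $|y-y^0|\le\delta$ we have $|x-z|\ge\tfrac12|x-y^0|$, so Proposition (estimate (\ref{May0}) with $|\alpha|=2$, $|\beta|=1$) gives
\[
\big|{\mathfrak G}_{ij}(x,y;t,s)-{\mathfrak G}_{ij}(x,y^0;t,s)\big|
\le C\,\delta\,(t-s)^{-\frac{n+3}2}\exp\!\Big(-\frac{\sigma|x-y^0|^2}{4(t-s)}\Big).
\]

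Next I would integrate out the variables in the right order to produce the $|\!|\!|\cdot|\!|\!|_{1,q}$ norm of $h$. For fixed $x$ with $|x-y^0|>2\delta$, take the $L_q$ norm in $t$ of $({\mathfrak G}_{ij}h)(x,\cdot)$; by Minkowski's integral inequality this is at most
\[
\int\limits_{\mathbb R}\int\limits_{|y-y^0|\le\delta}
\Big\|\,\chi_{\{t>s\}}\,C\delta\,(t-s)^{-\frac{n+3}2}
e^{-\sigma|x-y^0|^2/(4(t-s))}\Big\|_{L_q(dt)}\,|h(y,s)|\,dy\,ds.
\]
Call $K(x-y^0,s-s')$ with $s'=t$ the scalar kernel in brackets; a change of variable $t-s=|x-y^0|^2\tau$ shows $\|K(x-y^0,\cdot)\|_{L_q(dt)} = C\,\delta\,|x-y^0|^{-(n+3)+2/q'}\cdot|x-y^0|^{2/q}$ up to a finite constant coming from $\int_0^\infty \tau^{-q(n+3)/2}e^{-\sigma q/(4\tau)}\,d\tau<\infty$; i.e. it decays like $\delta\,|x-y^0|^{-n-1}$ for $n\ge1$. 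Hence, integrating in $x$ over $\{|x-y^0|>2\delta\}$,
\[
\int\limits_{|x-y^0|>2\delta}\Vert({\mathfrak G}_{ij}h)(x,\cdot)\Vert_q\,dx
\le C\,\delta\!\!\int\limits_{|x-y^0|>2\delta}\!\!|x-y^0|^{-n-1}\,dx
\cdot\!\!\int\limits_{\mathbb R}\int\limits_{|y-y^0|\le\delta}\!\!|h(y,s)|\,dy\,ds.
\]
The spatial integral equals $C\,\delta\cdot\delta^{-1}=C$, independent of $\delta$ and $y^0$, while the last double integral is dominated by $C\,|\!|\!|h|\!|\!|_{1,q}$ after one more application of Minkowski (or H\"older in $s$ on the bounded support followed by the definition of the mixed norm). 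Actually the cleanest route is to keep $\|h(y,\cdot)\|_{L_q(dt)}$ through Minkowski from the start, so that $\int_{|y-y^0|\le\delta}\|h(y,\cdot)\|_q\,dy=|\!|\!|h|\!|\!|_{1,q}$ appears directly; this is the version I would write up.

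The main obstacle is the bookkeeping of the convolution in $t$ in a way compatible with the mixed norm: one must apply Minkowski's inequality to pull the $L_q(dt)$ norm inside both the $dy$ and the $ds$ integrals before estimating, and then check that the resulting scalar one–dimensional kernel $s\mapsto\|(t-s)^{-(n+3)/2}e^{-\sigma r^2/(t-s)}\chi_{\{t>s\}}\|_{L_q(dt)}$ is integrable in $s$ uniformly, with the correct power of $r=|x-y^0|$ so that the $x$–integration over $\{r>2\delta\}$ contributes exactly $\delta^{-1}$ and cancels the $\delta$ from the cancellation estimate. The exponential decay in $r^2/(t-s)$ makes all the $\tau$–integrals converge (here $n+3\ge4>2$, so even $q=1$ is fine), and the homogeneity of the Gaussian under parabolic scaling makes every power match up automatically; no borderline cases arise. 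Everything else is routine.
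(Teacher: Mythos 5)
Your overall strategy (cancellation via $\int h\,dy\equiv 0$, gradient estimate for the kernel, then integration to recover $|\!|\!|h|\!|\!|_{1,q}$) is the same as the paper's, but the computation of $\|K(x-y^0,\cdot)\|_{L_q(dt)}$ after applying Minkowski in \emph{both} $dy$ and $ds$ is wrong. The change of variable $t-s=r^2\tau$ (with $r=|x-y^0|$) gives
\begin{equation*}
\Bigl(\int_0^\infty (r^2\tau)^{-q(n+3)/2}\,e^{-q\sigma/(4\tau)}\,r^2\,d\tau\Bigr)^{1/q}=C\,r^{2/q-(n+3)},
\end{equation*}
not $r^{-(n+1)}$; your formula $r^{-(n+3)+2/q'}\cdot r^{2/q}$ carries an extra, unaccounted-for factor $r^{2/q'}$, and the two expressions agree only when $q=1$. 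With the correct exponent, the $x$-integration produces $\delta\cdot\delta^{2/q-3}=\delta^{2/q-2}$, which is not uniform in $\delta$ for $q>1$; moreover, Minkowski in $ds$ leaves you with $\int|h(y,s)|\,ds$ rather than $\Vert h(y,\cdot)\Vert_q$, which for $q>1$ cannot be converted to the latter since $h$ is not compactly supported in $s$. Your closing sentence, asking that $s\mapsto\|(t-s)^{-(n+3)/2}e^{-\sigma r^2/(t-s)}\chi_{\{t>s\}}\|_{L_q(dt)}$ be integrable in $s$, is also off: that quantity is constant in $s$, hence never integrable over $\mathbb R$.

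The fix is the route you gesture at in your penultimate sentence but do not carry out: apply Minkowski in $y$ \emph{only}, and for each fixed $y$ treat the inner $ds$-integral as a convolution in time. Young's inequality (exponents $1$ and $q$) gives $\bigl\|\int K(x-y^0,\cdot-s)\,h(y,s)\,ds\bigr\|_{L_q(dt)}\le\|K(x-y^0,\cdot)\|_{L_1}\,\|h(y,\cdot)\|_q$, and the $L_1$-norm of the scalar kernel is what actually decays like $C\delta\,r^{-(n+1)}$, which then makes the $x$-integration yield $\delta\cdot\delta^{-1}=C$ as you wanted, with $\int_{|y-y^0|<\delta}\|h(y,\cdot)\|_q\,dy\le|\!|\!|h|\!|\!|_{1,q}$ left over. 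This is equivalent to (and slightly cleaner than) the paper's argument: the paper applies H\"older in $s$ to split $\exp\cdot(t-s)^{-(n+3)/2}$ into a $1/q$-power factor kept with $|h|^q$ and a $1/q'$-power factor estimated by $C|x-y|^{-(n+1)/q'}$, then Minkowski in $y$ and Fubini in $(t,s)$ — effectively a hand-rolled proof of the Young inequality in this specific case.
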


\begin{proof}
Due to $\int h(y,s)\ dy\equiv 0$, we have
$$({\mathfrak G}_{ij}h)(x,t)=\int\limits_{-\infty}^{t}\int\limits_{\mathbb R^n}
\Bigl({\mathfrak G}_{ij}(x,y;t,s)-{\mathfrak
G}_{ij}(x,y^0;t,s)\Bigr)\, h(y,s)\ dy\ ds.
$$
Using estimate (\ref{May0}) for $\nabla_y{\mathfrak
G}_{ij}(x,y;t,s)$, we obtain
$$
\Bigl|{\mathfrak G}_{ij}(x,y;t,s)-{\mathfrak
G}_{ij}(x,y^0;t,s)\Bigr|\le \frac {C\delta}{(t-s)^{\frac
{n+3}2}}\,\exp \left(-\frac {\sigma_1|x-y|^2}{t-s} \right)
$$
for $|y-y^0|\le\delta$ and $|x-y^0|\ge2\delta$, where $\sigma_1$
is a positive constant depending on $\sigma$. Applying this
estimate together with the H\"older inequality, we get
\begin{eqnarray}\label{est_gamma}
|({\mathfrak G}_{ij}h)(x,t)|&\le &C\delta\int\limits_{\mathbb R^n}
\left(\ \int\limits_{-\infty}^{t} \exp \left(-\frac
{\sigma_1|x-y|^2}{t-s}\right) \frac {|h(y,s)|^q\ ds}{(t-s)^{\frac
{n+3}2}}\right)^{\frac 1q}\nonumber\\
&\times&\left(\ \int\limits_{-\infty}^{t} \exp \left(-\frac
{\sigma_1|x-y|^2}{t-s} \right) \frac {ds}{(t-s)^{\frac
{n+3}2}}\right)^{\frac 1{q'}}dy.
\end{eqnarray}

 \noindent Using the change of variable $\tau=(t-s)|x-y|^{-2}$ in
the last integral over $(-\infty,t)$, we estimate it by
$C|x-y|^{-(n+1)}$. Therefore,
\begin{equation*}
|({\mathfrak G}_{ij}h)(x,t)|\le C\int\limits_{\mathbb R^n} \left(\
\int\limits_{-\infty}^{t} \exp \left(-\frac
{\sigma_1|x-y|^2}{t-s}\right) \frac {|h(y,s)|^q\ ds}{(t-s)^{\frac
{n+3}2}}\right)^{\frac 1q}\!\frac{\delta dy}{|x-y|^{(n+1)/q'}}
\end{equation*}
for $|x-y_0|>2\delta$.
 Integrating this estimate and applying   Minkowski's inequality, we
obtain
\begin{eqnarray*}
\int\limits_{|x-y^0|>2\delta}\! \Vert ({\mathfrak
G}h)(x,\cdot)\Vert_q\ dx&\leq&
C\!\!\int\limits_{|x-y^0|>2\delta}\int\limits_
{|y-y^0|<\delta}\ \frac {\delta\ dy\ dx} {|x- y|^{\frac {n+1}{q'}}}\\
&\times &\bigg(\ \int\limits_{-\infty}^{\infty}
\int\limits_{-\infty}^t\exp \bigg(-\frac {\sigma_1|x-y|^2}{t-s}
\bigg) \frac {|h(y,s)|^q\ ds\ dt}
{({t-s})^{\frac {n+3}2}}\bigg)^{\frac 1q} \\
&\le& C\int\limits_{|y-y^0|<\delta}\| h(y,\cdot)\|_q\ dy
\sup\limits_{|y-y^0|<\delta} \int\limits_ {|x-y^0|>2\delta}\frac
{\delta\ dx}
{|x- y|^{\frac {n+1}{q'}}}\\
&\times&\left(\ \sup\limits_{s\ge 0}\int\limits_s^{\infty} \exp
\left(-\frac {\sigma_1|x-y|^2}{t- s} \right) \frac {dt}
{({t-s})^{\frac {n+3}2}}\right)^{\frac 1q}.
\end{eqnarray*}
Using again the change of variable $\tau=(t-s)|x-y|^{-2}$ in the
last integral, we estimate it by $C|x-y|^{-(n+1)}$, and hence
$$\int\limits_{|x-y^0|>2\delta}\Vert ({\mathfrak G}_{ij}h)(x,\cdot)\Vert_q\
dt\le C\,|\!|\!| h|\!|\!|_{1,q}\, \sup\limits_{|y-y^0|<\delta}
\int\limits_ {|x-y^0|>2\delta}\frac {\delta\ dx} {|x- y|^{n+1}}\le
C\,|\!|\!| h|\!|\!|_{1,q},
$$
which coincides with (\ref{Feb10a}).
\end{proof}

\begin{sats}\label{space}
Let $p, q\in\,(1,\infty)$ and $f\in \widetilde L_{p,q}(\mathbb
R^n\times\mathbb R)$.
 Then the solution of equation {\rm (\ref{Jan1})} given by {\rm (\ref{TTN1})} satisfies
\begin{equation}\label{est_space}
|\!|\!|\partial_t u|\!|\!|_{p,q}+\sum_{ij}|\!|\!|
D_iD_ju|\!|\!|_{p,q}\le C\ |\!|\!| f|\!|\!|_{p,q},
\end{equation}
where $C$ depends only on $\nu$, $p$, $q$.
\end{sats}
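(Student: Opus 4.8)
The plan is to prove the estimate for the second-order spatial derivatives $D_iD_ju$, since the bound for $\partial_t u$ then follows by writing $\partial_t u = a^{ij}(t)D_iD_j u + f$ and using $|a^{ij}|\le\nu^{-1}$. So it suffices to show that each operator ${\mathfrak G}_{ij}$, defined by the kernel $D_{x_i}D_{x_j}\Gamma$ via (\ref{Feb15a}), is bounded on $\widetilde L_{p,q}(\mathbb R^n\times\mathbb R)$. The natural route is a Calderón–Zygmund argument, \emph{but carried out in the $x$-variable with values in $L_q(\mathbb R_t)$}: we treat ${\mathfrak G}_{ij}$ as a singular integral operator acting on functions $x\mapsto h(x,\cdot)\in L_q(\mathbb R)$, with the parabolic structure only entering through the $t$-integration. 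The $\widetilde L_{p,q}$ norm is exactly the $L_p(\mathbb R^n; L_q(\mathbb R))$ norm, so a vector-valued Calderón–Zygmund theorem will give $L_p$ boundedness for all $p\in(1,\infty)$ once we have (a) an $L_2$-type endpoint and (b) the Hörmander-type cancellation condition on the kernel.

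The cancellation/Hörmander condition is precisely what Lemma \ref{weak_1} provides: for $h$ supported in a ball $|y-y^0|\le\delta$ with $\int h(y,s)\,dy\equiv 0$, one has $\int_{|x-y^0|>2\delta}\|({\mathfrak G}_{ij}h)(x,\cdot)\|_q\,dx\le C\,|\!|\!|h|\!|\!|_{1,q}$, with $C$ independent of $\delta,y^0$; this is exactly the form of the cancellation hypothesis in the vector-valued Calderón–Zygmund machinery (it replaces the pointwise Hörmander kernel estimate $\int_{|x-y^0|>2|y-y^0|}|K(x,y)-K(x,y^0)|\,dx\le C$). For the endpoint, the cleanest choice is $p=q$: then $\widetilde L_{q,q}=L_{q,q}$ is just $L_q(\mathbb R^{n+1})$ with Lebesgue measure, and the bound $\|{\mathfrak G}_{ij}h\|_{q}\le C\|h\|_{q}$ on $L_q(\mathbb R^{n+1})$ is Krylov's estimate (\ref{Jan4}) with $p=q$, valid for all $q\in(1,\infty)$ by \cite{Kr}, \cite{DoKr}, \cite{Kim}. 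So the scheme is: fix any target $p\in(1,\infty)$, pick $q=p$ as the "good" exponent where boundedness is already known, then run the following interpolation/extrapolation.

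Concretely I would argue in two stages. First, fix $q\in(1,\infty)$ and regard $q$ as the fixed integrability in $t$; we want $L_p(\mathbb R^n;L_q)$ boundedness for all $p\in(1,\infty)$. We know it for $p=q$ from (\ref{Jan4}), and Lemma \ref{weak_1} gives the Hörmander condition. The standard Calderón–Zygmund decomposition argument (in the $x$-variable, $L_q(\mathbb R)$-valued, splitting a function at height $\lambda$ into a good part in $L_q(\mathbb R^{n+1})$ and a sum of mean-zero bad pieces on dyadic cubes, then applying Lemma \ref{weak_1} to each bad piece) yields the weak-type $(1,1)$ bound $\mes\{x:\|({\mathfrak G}_{ij}h)(x,\cdot)\|_q>\lambda\}\le C\lambda^{-1}|\!|\!|h|\!|\!|_{1,q}$. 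Marcinkiewicz interpolation between this weak $(1,1)$ bound and the strong $(q,q)$ bound gives $L_p(\mathbb R^n;L_q)$ boundedness, i.e.\ $|\!|\!|{\mathfrak G}_{ij}h|\!|\!|_{p,q}\le C|\!|\!|h|\!|\!|_{p,q}$, for $1<p\le q$; a duality argument (the adjoint of ${\mathfrak G}_{ij}$ has a kernel of the same structure, so it satisfies the analogous Lemma) covers $q\le p<\infty$. Running this for every $q\in(1,\infty)$ gives the full range. The main obstacle is making the vector-valued Calderón–Zygmund decomposition clean: one must verify that the bad functions produced on the dyadic cubes inherit the cancellation $\int h(y,s)\,dy=0$ \emph{for a.e.\ $s$} (not just $\int\!\!\int h=0$), which is automatic since the averaging in the decomposition is over the $x$-cube only, and that the good function is controlled in $L_q(\mathbb R^{n+1})$; once the decomposition respects the product structure, Lemma \ref{weak_1} plugs in directly and the rest is routine. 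Finally, $\partial_t u = a^{ij}D_iD_j u + f$ together with (\ref{Jan2}) gives $|\!|\!|\partial_t u|\!|\!|_{p,q}\le C(|\!|\!|f|\!|\!|_{p,q}+\sum_{ij}|\!|\!|D_iD_j u|\!|\!|_{p,q})\le C|\!|\!|f|\!|\!|_{p,q}$, completing the proof of (\ref{est_space}).
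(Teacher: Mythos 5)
Your argument is essentially the one the paper uses: both start from Krylov's estimate (\ref{Jan4}) at $p=q$ as the diagonal endpoint, both use Lemma \ref{weak_1} as the H\"ormander-type cancellation condition, both dualize to cover $p>q$, and both recover $\partial_t u$ from the equation. The only difference is presentational: the paper compresses your explicit vector-valued Calder\'on--Zygmund decomposition, weak-$(1,1)$ bound, and Marcinkiewicz interpolation into a single citation of \cite[Theorem 3.8]{BIN}, which is precisely the packaged version of that machinery.
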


\begin{proof} From (\ref{Jan4}) it follows boundedness of
${\mathfrak G}_{ij}$ in $L_q(\mathbb R^n\times \mathbb R)$,
$1<q<\infty$, which implies the first condition in \cite[Theorem
3.8]{BIN} with $p=r=q$. Lemma \ref{weak_1} is equivalent to the
second condition in this theorem with $p=q$. Therefore, we can
apply Theorem 3.8 \cite{BIN} to the operator ${\mathfrak G}_{ij}$
and it ensures that this operator is bounded in $\widetilde
L_{p,q}(\mathbb R^n\times \mathbb R)$ for any $p\in\,(1,q)$. For
$p>q$ its boundedness follows from the boundedness of the adjoint
operator in $\widetilde L_{p',q'}(\mathbb R^n\times\mathbb R)$
which is proved by verbatim repetition of previous arguments.

Thus, we obtain the estimate of the second term in
(\ref{est_space}). The estimate of the first term follows now from
(\ref{Jan1}).
\end{proof}

In Section \ref{Rn+2} we need the following estimate for the
operator corresponding to the  truncated kernels
$$
\widehat{\mathfrak G}_{ij}(x,y;t,s)= \chi_{\{x_n>\sqrt{t- s}\}}\,
D_{x_i}D_{x_j}\Gamma(x,y;t,s),
$$
where $\chi$ stands for the indicator function.
\begin{sats}\label{trunc}
Let $p, q\in\,(1,\infty)$, and let $j\ne n$. Then the integral
operator $\widehat{\mathfrak G}_{ij}$ is bounded both in
$L_{p,q}(\mathbb R^n\times \mathbb R)$ and $\widetilde
L_{p,q}(\mathbb R^n\times \mathbb R)$ spaces.
\end{sats}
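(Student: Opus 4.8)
The plan is to reduce the statement to the boundedness of $\widehat{\mathfrak G}_{ij}$ on $L_q(\mathbb R^n\times\mathbb R)$ for every $q\in(1,\infty)$, and then to reuse the interpolation scheme from the proof of Theorem \ref{space}. The weak-type ingredient is cheap: Lemma \ref{weak_1} holds \emph{verbatim} for $\widehat{\mathfrak G}_{ij}$, because the truncation $\chi_{\{x_n>\sqrt{t-s}\}}$ does not involve $y$ and satisfies $0\le\chi\le1$, whence $|\nabla_y\widehat{\mathfrak G}_{ij}(x,y;t,s)|\le|\nabla_y{\mathfrak G}_{ij}(x,y;t,s)|$ and the proof is unchanged. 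Together with the $L_q$-bound this gives, through \cite[Theorem 3.8]{BIN} and duality for $p>q$, boundedness in $\widetilde L_{p,q}$; boundedness in $L_{p,q}$ follows in the same manner from the companion weak-type estimate obtained by interchanging the roles of $x$ and $t$ and using the $\partial_s$-counterpart of (\ref{May0'}). The only nonstandard point in the latter is the jump of $\chi$ in the variable $s$: it is concentrated on the set $\{t-s\approx x_n^2\}$, i.e.\ at parabolic distance $\approx x_n$ from $(x,t)$, and a direct estimate exploiting the Gaussian decay of $\Gamma$ off that scale shows that it contributes a bounded amount. So everything reduces to the $L_q$-bound.

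For the $L_q$-bound I would first treat $q=2$, and this is where the hypothesis $j\ne n$ is used. Since $\chi_{\{x_n>\sqrt{t-s}\}}$ involves neither $y$ nor the tangential variables $x'=(x_1,\dots,x_{n-1})$, and $\Gamma$ depends on $x-y$, the Fourier transform in $x'$ turns $\widehat{\mathfrak G}_{ij}$ into a family, indexed by the dual variable $\xi'\in\mathbb R^{n-1}$, of operators $T_{\xi'}$ on $L_2(\mathbb R_{x_n}\times\mathbb R_t)$ with kernel $\chi_{\{x_n>\sqrt{t-s}\}}\,(i\xi_j)\,D_{x_i}\widetilde\Gamma(\xi';x_n-y_n;t,s)$, where $\widetilde\Gamma$ is the partial Fourier transform of $\Gamma$ and, by the ellipticity (\ref{Jan2}),
$$
|\widetilde\Gamma(\xi';z;t,s)|\le C\,(t-s)^{-1/2}\exp\Big(-\frac{c\,z^2}{t-s}\Big)\exp\big(-c\,|\xi'|^2(t-s)\big)
$$
(when $i=n$ the $x_n$-derivative of $\widetilde\Gamma$ merely adds a harmless factor $|\xi'|+|z|/(t-s)$). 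The point of $j\ne n$ is that $D_{x_j}$ contributes only the multiplier $i\xi_j$, with $|\xi_j|\le|\xi'|$; after the parabolic rescaling normalising $|\xi'|=1$, Schur's test applies uniformly in $\xi'$, the one-dimensional $y_n$-integration $\int\exp(-cz^2/(t-s))\,dy_n=C(t-s)^{1/2}$ being precisely what makes the resulting $t$- and $x_n$-integrals converge (they reduce to elementary integrals of the form $\int_0^\infty\tau^{-k/2}\exp(-a/\tau-c\tau)\,d\tau$ with small $k$, integrated, where needed, once more in $x_n$); the truncation enters only through $0\le\chi\le1$. Plancherel in $x'$ then gives boundedness of $\widehat{\mathfrak G}_{ij}$ on $L_2(\mathbb R^n\times\mathbb R)$. (For $i=j=n$ — the only case excluded, since $\widehat{\mathfrak G}_{ij}=\widehat{\mathfrak G}_{ji}$ — the analogous row integral is $\int_0^\infty\tau^{-1}\,d\tau=\infty$.)

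To pass from $q=2$ to general $q\in(1,\infty)$, I would check that the kernel of $\widehat{\mathfrak G}_{ij}$ is a Calderón–Zygmund kernel relative to the parabolic metric on $\mathbb R^{n+1}$, in both pairs of variables. The size bound and the smoothness away from the surface $x_n=\sqrt{t-s}$ are exactly the estimates (\ref{May0}), (\ref{May0'}) of the Proposition applied to $D_{x_i}D_{x_j}\Gamma$. The only new contribution to the Hörmander integrals comes from the jump of $\chi$, which lives where $\sqrt{t-s}$ separates $x_n$ from its perturbed value, hence on a set (of $s$, of $x_n$, or of $t$) of small measure sitting at $t-s\approx x_n^2$, where $|D_{x_i}D_{x_j}\Gamma|\le C\,x_n^{-(n+2)}\exp(-\sigma|x-y|^2/x_n^2)$; splitting according to whether $x_n$ exceeds the relevant scale — and using the Gaussian factor when it does not — shows that this contribution is bounded. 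The standard Calderón–Zygmund machinery then upgrades the $L_2$-bound to boundedness on $L_q(\mathbb R^n\times\mathbb R)$ for all $q\in(1,\infty)$, which by the first paragraph finishes the proof.

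The main obstacle is the $L_2$-estimate of the second paragraph — the Fourier reduction together with the Schur bounds on the fibres, the place where $j\ne n$ is genuinely needed — and, closely related, the verification that the truncated kernel, discontinuous across $x_n=\sqrt{t-s}$, still obeys the Hörmander conditions. Everything else merely repeats the argument already carried out for Theorem \ref{space}.
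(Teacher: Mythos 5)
Your plan matches the paper's proof step for step (an $L_2$-bound; an upgrade to $L_q$ for all $q\in(1,\infty)$ via a H\"ormander-type cancellation estimate on parabolic cylinders plus duality, the paper invoking \cite[Theorem 3 and \S5.3]{St}; and then the two mixed-norm bounds from weak-type estimates in the outer variable through \cite[Theorem 3.8]{BIN} and duality). Your remarks that the proof of Lemma~\ref{weak_1} carries over verbatim to $\widehat{\mathfrak G}_{ij}$ because $\chi_{\{x_n>\sqrt{t-s}\}}$ is $y$-independent and $0\le\chi\le1$, and that the jump of $\chi$ across $t-s=x_n^2$ requires a separate thin-shell estimate, are precisely the paper's Steps 4 and 3, where that jump contributes a term of size $\delta/(t-s)$ after an explicit change of variables. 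The one place where you take a genuinely different path is the $L_2$-bound, which is also where $j\ne n$ enters: you use the partial Fourier transform in $x'$ alone and run Schur's test on the fibre operators $T_{\xi'}$, uniformly in $\xi'$; your computation is sound, including the observation that $i=j=n$ makes the corresponding Schur integral $\int_0^{x_n^2}\tau^{-1}e^{-c|\xi'|^2\tau}\,d\tau$ diverge. The paper instead first subtracts the full-space and half-line-restricted operators, which are already $L_2$-bounded by \cite{Kr}, to reduce to $\widetilde{\mathfrak G}_{ij}=\chi_{\{x_n\in(0,\sqrt{t-s})\}}{\mathfrak G}_{ij}$, then takes the \emph{full} Fourier transform in $x$, so that the truncation appears as the explicit multiplier $(\eta_n-\xi_n)^{-1}\big(e^{i\sqrt{t-s}(\eta_n-\xi_n)}-1\big)$, and estimates with H\"older and Plancherel. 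Both approaches exploit $j\ne n$ identically (one factor $\xi_j$ is tangential, so the normal-direction singularity drops by one degree); yours is the more structural argument, the paper's the one that displays the symbol explicitly.
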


\begin{proof} Step 1. {\em Boundedness in $L_{2}(\mathbb R^n\times \mathbb R)$}.
Since the boundedness of the operators ${\mathfrak G}_{ij}$ and
$\chi_{\{x_n>0\}}\, {\mathfrak G}_{ij}$ in $L_2$ follows from
(\ref{Jan4}), it suffices to show that the operator with kernel
$$
\widetilde{\mathfrak G}_{ij}(x,y;t,s)=\chi_{\{x_n\in\,(0,\sqrt{t-s})\}}\,D_{x_i}D_{x_j}\Gamma(x,y;t,s)
$$
is bounded.

The Fourier transform with respect to $x$ gives
\begin{eqnarray*}
{\cal F}(\widetilde{\mathfrak
G}_{ij}h)(\xi',\eta_n,t)&=&\int\limits_{-\infty}^t
\int\limits_{\mathbb R}\exp \Big(-\Big(\big( {\textstyle\int_s^t
A(\tau)
d\tau} \big) \xi,\xi\Big)\Big)\,\xi_i\xi_j\,({\cal F}h)(\xi,s)\\
&\times&\frac
{\exp(i\sqrt{t-s}(\eta_n-\xi_n))-1}{\eta_n-\xi_n}\,d\xi_nds.
\end{eqnarray*}
Using the assumption $j\ne n$ and (\ref{Jan2}) we obtain
\begin{eqnarray*}
|{\cal F}(\widetilde{\mathfrak G}_{ij}h)(\xi',\eta_n,t)|&\le&
\int\limits_{0}^\infty\int\limits_{\mathbb R}\exp \big(-\nu |\xi|^2
s\big)\,
|\xi|\,|\xi'|\\
&\times& |({\cal F}h)(\xi,t-s)|\ \phi\big(\sqrt s
(\eta_n-\xi_n)\big)\sqrt{s} \,d\xi_nds,
\end{eqnarray*}
where $\phi(\tau)=\big|\tau^{-1}(\exp(i\tau)-1)\big|$. By the
H\"older inequality,
\begin{eqnarray*}
&&|{\cal F}(\widetilde{\mathfrak G}_{ij}h)(\xi',\eta_n,t)|^2\le
\int\limits_{\mathbb R} |\xi'|^2 \int\limits_0^\infty \exp
\big(-\nu |\xi|^2 s\big)\sqrt{s}\,ds\,d\xi_n\\
&&\times\int\limits_{\mathbb R}\int\limits_0^\infty |({\cal
F}h)(\xi,t-s)|^2 |\xi|^2\exp \big(-\nu|\xi|^2 s\big) \phi^2(\sqrt s
(\eta_n-\xi_n)) \sqrt{s}\,dsd\xi_n.
\end{eqnarray*}
First two integrals give a constant.  Therefore,
\begin{eqnarray*}
&&\int\limits_{\mathbb R}\int\limits_{\mathbb R} |{\cal
F}(\widetilde{\mathfrak G}_{ij}h)(\xi',\eta_n,t)|^2d\eta_ndt\le
C\int\limits_{\mathbb R}\phi^2(\sqrt s (\eta_n- \xi_n))
\sqrt{s}\,d\eta_n\times\\
&&\!\!\!\int\limits_{\mathbb R}\!\!\! \int\limits_{\mathbb R}|({\cal
F}h)(\xi,\tau)|^2 d\tau \int\limits_0^\infty|\xi|^2\exp \big(-\nu
|\xi|^2 s\big) dsd\xi_n \le\!\! C\!\!\int\limits_{\mathbb R}
\!\!\!\int\limits_{\mathbb R}|({\cal F}h)(\xi,\tau)|^2 d\tau d\xi_n.
\end{eqnarray*}
We integrate this inequality with respect to $\xi'$, and the
statement follows by the Plancherel theorem.\medskip

Step 2. {\em Boundedness in $L_p(\mathbb R^n\times \mathbb R)$}.
For a function $h$ supported in the cylinder $Q_\delta(y^0,s^0)$
and satisfying $\int h(y,s)\ dyds= 0$ the following inequality is
valid:
\begin{equation}\label{Jan23cq}
\int\limits_{\mathbb R^n\backslash Q_{2\delta}(y^0,s^0)}
|(\widehat {\mathfrak G}_{ij} h)(x,t)|\ dxdt\le C\,\| h\|_{1},
\end{equation}
where $C$ does not depend on $\delta$, $y^0$ and $s^0$. Since the
proof of this inequality repeats, with some simplifications, the
proof of estimate (\ref{Jan23c}) below, we confine ourselves  to
proving (\ref{Jan23c}). By \cite[Theorem 3 and \S5.3]{St}, the
estimate (\ref{Jan23cq}) and Step 1 provide the boundedness of
$\widehat{\mathfrak G}_{ij}$ in $L_p(\mathbb R^n\times \mathbb R)$
for $1<p<2$. As in the proof of Theorem \ref{space}, the
boundedness for $2<p<\infty$ is proved  by duality argument.\medskip

Step 3. {\em Boundedness in $L_{p,q}(\mathbb R^n\times \mathbb
R)$}. Next, let us show that for a function $h$ supported in the
layer $|s-s^0|\le\delta$ and satisfying $\int h(y,s)\ ds\equiv 0$
for almost all $y$,
\begin{equation}\label{Jan23c}
\int\limits_{|t-s^0|>2\delta}\Vert (\widehat {\mathfrak G}_{ij}
h)(\cdot,t)\Vert_p\ dt\le C\,\| h\|_{p,1},
\end{equation}

\noindent where $C$ does not depend on $\delta$ and $s^0$. Since
$(\widehat {\mathfrak G}_{ij} h)(x,t)=0$ for $s>t+2\delta$, the
integral in (\ref{Jan23c}) is actually taken over $t>s_0+2\delta$.
By $\int h(y, s)\ ds\equiv 0$, we have
\begin{equation}\label{difference1}
(\widehat{\mathfrak G}_{ij}h)(x,t)=
\int\limits_{-\infty}^{t}\int\limits_{\mathbb R^n_+}
\Bigl(\widehat{\mathfrak G}_{ij}(x,y,t, s)- \widehat{\mathfrak
G}_{ij}(x,y,t, s^0)\Bigr)\, h(y, s)\ dy ds.
\end{equation}
For $| s- s^0|<\delta$ and $t- s^0>2\delta$, inequalities
(\ref{May0}) and (\ref{May0'})  imply
\begin{eqnarray*}
&&\left|\widehat{\mathfrak G}_{ij}(x,y,t, s)- \widehat{\mathfrak
G}_{ij}(x,y,t, s^0)\right| \le\int\limits_{s^0}^s|\partial_\tau
{\mathfrak G}_{ij}(x,y,t, \tau)|\,d\tau\\
&&+|{\mathfrak G}_{ij}(x,y,t,t-x_n^2)|\,\chi_{x_n^2\in[t-s,t-s^0]}
\le \frac {C\delta} {(t- s)^{\frac {n+4}2}}\, \exp
\left(-\frac {\sigma|x-y|^2}{t- s}\right)\\
&&+\frac {C\chi_{x_n^2\in[t-s,t-s^0]}}{(t-s)^{\frac {n+2}2}}
\,\exp\left(-\frac {\sigma|x-y|^2}{t-s}\right) =: {\cal
I}_1(x,y,t, s)+{\cal I}_2(x,y,t, s).
\end{eqnarray*}
Using this in estimating of the right-hand side in
(\ref{difference1}), we obtain
$$
\int\limits_{|t- s^0|>2\delta}\Vert (\widehat{\mathfrak
G}_{ij}h)(\cdot, t)\Vert_p\ dt \le \int\limits_{|t-
s^0|>2\delta}\Vert ({\cal I}_1h)(\cdot, t)\Vert_p\ dt
+\int\limits_{|t- s^0|>2\delta}\Vert ({\cal I}_2h)(\cdot,
t)\Vert_p\ dt.
$$
The first term is estimated by $\Vert h\Vert_{p,1}$ in the same
way as (\ref{est_gamma}). Let us estimate the second term. We have
\begin{eqnarray*}
|({\cal I}_2h)(x,t)|&\le& C\,\int\limits_{-\infty}^{t}\frac
{\chi_{\{x_n^2\in(t-s,t-s^0)\}}\
ds}{t-s}\biggl(\!\int\limits_{\mathbb R^n_+}\exp
\left(-\frac{\sigma|x-y|^2}{t-s} \right) \frac {dy}{(t- s)^{\frac
n2}} \biggr)^{\frac{1}{p'}}\\
&\times&\biggl(\!\int\limits_{\mathbb R^n_+} \exp
\left(-\frac{\sigma|x-y|^2} {t- s}\right)\frac {|h(y, s)|^p\ dy}
{(t- s)^{\frac n2}}\biggr)^{\frac{1}{p}}\\
\end{eqnarray*}
The last integral is bounded uniformly with respect to  $x$, $t$
and $s$. Since $| s- s^0|<\delta$, we have
$[t-s,t-s^0]\subset[t-s^0-\delta,t-s^0+\delta]$. Using the
Minkowski inequality, we obtain
\begin{eqnarray*}
\int\limits_{|t- s^0|>2\delta}\Vert ({\cal I}_2h)(\cdot, t)\Vert_p\
dt&\le &C\,\int\limits_{ s^0-\delta}^{ s^0+\delta} \|
h(\cdot,s)\|_p\ ds \,\int\limits_
{ s^0+2\delta}^{\infty}\frac {dt}{t-s}\\
&\times &\biggl(\ \sup\limits_y\int\limits_{\mathbb R^{n-1}}
\int\limits_{\sqrt {t-s^0-\delta}}^{\sqrt {t-s^0+\delta}} \exp
\left(-\frac {\sigma|x-y|^2}{t-s} \right) \frac {dx'dx_n}
{(t-s)^{\frac n2}}\biggr)^{\frac{1}{p}}.
\end{eqnarray*}
Denote by $I_2$ the integral in the last large brackets. Using the
change of variables $x=z\sqrt{t- s}$, $y=w\sqrt{t- s}$ and
integrating w.r.t. $z'$, we obtain
\begin{eqnarray*}
I_2&=&C\,\!\int\limits_{\sqrt {\frac {t-s^0-\delta}{t-s}}} ^{\sqrt
{\frac {t-s^0+\delta}{t-s}}} \exp\left(-\sigma |z_n-w_n|^2 \right)\ dz_n  \\
&\le &C\, \left(\mbox {\large $\textstyle \sqrt {\frac
{t-s^0+\delta}{t-s}}- \sqrt {\frac {t-s^0-\delta}{t-s}}$ }\right)\le
C\,\frac {\delta}{t-s}.
\end{eqnarray*}
Thus,
$$\int\limits_{|t- s^0|>2\delta}\Vert ({\cal I}_2h)(\cdot, t)\Vert_p\ dt\le
C\,\Vert h\Vert_{p,1}\, \sup\limits_{| s- s^0|<\delta}
\int\limits_ { s^0+2\delta}^{\infty}\frac {\delta^{1/p} dt} {(t-
s)^{1+1/p}}\le C\,\Vert h\Vert_{p,1}.
$$
By \cite[Theorem 3.8]{BIN}, the estimate (\ref{Jan23c}) and Step 2
provide the boundedness of $\widehat{\mathfrak G}_{ij}$ in
$L_{p,q}(\mathbb R^n\times \mathbb R)$ for $q\in\,(1,p)$. Using
duality argument, we obtain boundedness for
$q\in\,(p,\infty)$.\medskip

Step 4. {\em Boundedness  in $\widetilde L_{p,q}(\mathbb R^n\times
\mathbb R)$}. For a function $h$ supported in the cylinder
$|y-y^0|\le\delta$ and satisfying $\int h(y,s)\ dy\equiv 0$, the
following inequality
\begin{equation}\label{Jan23cqq}
\int\limits_{|x-y^0|>2\delta}\Vert (\widehat {\mathfrak G}_{ij}
h)(x,\cdot)\Vert_q\ dx\le C\,|\!|\!| h|\!|\!|_{1,q},
\end{equation}
holds, where $C$ does not depend on $\delta$ and $y^0$. The proof
of (\ref{Jan23cqq}) repeats literally the proof of Lemma
\ref{weak_1}. By \cite[Theorem 3.8]{BIN}, the estimate
(\ref{Jan23cqq}) and Step 2 provide   boundedness of
$\widehat{\mathfrak G}_{ij}$ in $\widetilde L_{p,q}(\mathbb
R^n\times \mathbb R)$ for $p\in\,(1,q)$. The boundedness  for
$p\in\,(q,\infty)$ follows by duality argument.
\end{proof}

\section{The Green function in a half-space}\label{Rn+1}

We denote  by $\Gamma^{\cal D}(x,y,t,s)$ the Green function of the
operator ${\cal L}_0$ in the half-space ${\mathbb R}^n_+$ subject
to the homogeneous Dirichlet boundary condition on the boundary
$x_n=0$. From the maximum principle it follows that
$0\le\Gamma^{\cal D}(x,y,t,s)\le \Gamma(x,y,t,s)$ and hence by
(\ref{May0})
\begin{equation}\label{Okt1}
|\Gamma^{\cal D}(x,y,t,s)|\le C\,(t-s)^{-\frac{n}{2}}\exp
\left(-\frac{\sigma|x-y|^2}{t-s}\right).
\end{equation}
The aim of this section is to prove point-wise estimates for
derivatives of $\Gamma^{\cal D}$.

We need a well-known local gradient estimate for solutions to
parabolic equations in a half-space. The next statement can be
found (up to scaling) in Ch. III, Sect. 11 and 12 in \cite{LSU}.

\begin{prop}\label{Pr1a} {\rm (i)} Let $u\in W^{2,1}_2(Q_R(x^0,t^0))$ solve the equation ${\cal
L}_0u=0$  in $Q_R(x^0,t^0)$ . Then
$$
|D u|\le \frac{C}{R}\sup_{Q_{R}(x^0,t^0)} u\qquad \mbox{in}\quad
Q_{R/2}(x^0,t^0).
$$
{\rm (ii)} Let $u\in W^{2,1}_2(Q_R^+(x^0,t^0))$ solve the equation
${\cal L}_0u=0$  in $Q_R^+(x^0,t^0)$ and let $u|_{x_n=0}=0$. Then
$$
|D u|\le \frac{C}{R}\sup_{Q_{R}^+(x^0,t^0)} u\qquad \mbox{in}\quad
Q_{R/2}^+(x^0,t^0).
$$
Here $C$ depends only on $\nu$.

\end{prop}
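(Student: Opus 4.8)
The plan is to remove the scale parameter by a parabolic dilation and then to use in an essential way that the coefficients $a^{ij}$ depend only on $t$. Fixing $(x^0,t^0)$ and setting $v(x,t)=u(x^0+Rx,t^0+R^2t)$, one has $v\in W^{2,1}_2(Q_1)$ (resp. $W^{2,1}_2(Q_1^+)$) and ${\cal L}_0v=0$ with $a^{ij}$ replaced by $a^{ij}(t^0+R^2t)$, which still satisfies (\ref{Jan2}) with the same $\nu$; in case (ii) also $v|_{x_n=0}=0$. Since $D_xv(x,t)=R(Du)(x^0+Rx,t^0+R^2t)$, the two assertions are equivalent to $\sup_{Q_{1/2}}|Dv|\le C(\nu)\sup_{Q_1}|v|$ and its half-space analogue, which is the local gradient estimate of \cite[Ch.~III, \S\S11,12]{LSU}. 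Below I indicate a proof adapted to our situation.

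For the interior bound (i), and for the tangential derivatives $D_kv$ with $k\ne n$ in (ii), the point is that, the coefficients being independent of $x$, each $w=D_kv$ again solves ${\cal L}_0w=0$, with $w|_{x_n=0}=0$ when $k\ne n$ in case (ii). I would then combine two classical facts. First, the energy (Caccioppoli) inequality: testing ${\cal L}_0v=0$ with $\zeta^2v$ for a cut-off $\zeta$ gives $\int_{Q_{3/4}}|Dv|^2\le C\sup_{Q_1}|v|^2$ (and the half-space version, using $v|_{x_n=0}=0$). Second, a local bound $\sup_{Q_{1/2}}|w|\le C\bigl(\int_{Q_{3/4}}|w|^2\bigr)^{1/2}$ for solutions, which follows from the Gaussian estimate (\ref{May0}): representing $\zeta w$ by Duhamel's formula against $\Gamma$ and integrating by parts in $y$ to move one derivative off the commutator term, one is left with kernels dominated by $(t-s)^{-(n+2)/2}\exp(-\sigma|x-y|^2/(t-s))$ integrated over a region where $|x-y|$ stays bounded away from zero. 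Applying the second fact to $w=D_kv$ and inserting the first gives $\sup_{Q_{1/2}}|D_kv|\le C\sup_{Q_1}|v|$.

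It remains to estimate $D_nv$ up to the flat boundary in case (ii), and this is the step that genuinely uses the Dirichlet condition and that I expect to be the main obstacle. (Note that a naive odd reflection across $\{x_n=0\}$ does not reduce it to the interior case, because the mixed coefficients $a^{in}$, $i\ne n$, break the symmetry of ${\cal L}_0$.) I would argue by a Hopf-type comparison: near a boundary point, in a small cylinder adjacent to $\{x_n=0\}$, construct a barrier $\Phi\ge0$ vanishing on the flat face, bounded below by a constant on the remainder of the parabolic boundary, and satisfying ${\cal L}_0\Phi\ge0$; the construction starts from $x_n$, which solves ${\cal L}_0(x_n)=0$, corrected by terms concave in $x'$ and monotone in $t$, the delicate part being to absorb the contribution of the mixed coefficients. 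Comparison then yields $|v|\le C(\sup_{Q_1^+}|v|)\,\Phi$, and dividing by $x_n$ and letting $x_n\to0$ bounds $|D_nv|$ on $\{x_n=0\}$ by $C\sup_{Q_1^+}|v|$. Since $D_nv$ solves ${\cal L}_0(D_nv)=0$ and, by (i), is already controlled at positive distance from the boundary, a last comparison in the thin slab next to $\{x_n=0\}$ propagates the bound to all of $Q_{1/2}^+$. All of this is contained in the boundary gradient estimate of \cite[Ch.~III, \S\S11,12]{LSU}.
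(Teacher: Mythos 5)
Your overall route is the same as the paper's, which does not prove the Proposition but simply refers to the boundary gradient estimate of \cite[Ch.~III, \S\S11--12]{LSU} after a parabolic rescaling; the rescaling step and the use of $x$-independence of $a^{ij}$ to differentiate the equation are both exactly right, and your remark that a naive odd reflection fails because the mixed coefficients $a^{in}$ flip sign under $x_n\mapsto-x_n$ is correct (indeed the paper resorts to odd reflection only for the symmetric operator $\partial_t-a^{nn}\Delta$ in the proof of Lemma~\ref{L1b}). Two caveats in the elaboration deserve attention. First, your local $L^\infty$-from-$L^2$ bound is formulated for interior cylinders via Duhamel against $\Gamma$; to push it up to $\{x_n=0\}$ for the tangential derivatives in part (ii) one must use the Dirichlet Green function $\Gamma^{\cal D}$, and the integration by parts you propose to remove the $Dw$ factor then requires a pointwise bound on $D_y\Gamma^{\cal D}$ --- but those derivative bounds are obtained in Section~\ref{Rn+1} of the paper precisely \emph{from} Proposition~\ref{Pr1a}, so taken literally this is circular. (It is avoidable: one can instead iterate Caccioppoli to control $\|Dw\|_{L^2}$ and use only $0\le\Gamma^{\cal D}\le\Gamma$ with Cauchy--Schwarz, or do as LSU does and argue by energy/De Giorgi--Moser without any Green function.) Second, the barrier sketch for $D_nv$ is not quite right as stated: adding to $x_n$ terms ``concave in $x'$ and monotone in $t$'' destroys the vanishing on $\{x_n=0\}$; the standard construction either keeps a multiplicative factor $x_n\cdot\psi(x',x_n,t)$ or is done in a parabolic paraboloid touching a single boundary point rather than a full small cylinder. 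Since you explicitly flag both of these as the delicate steps and fall back on the LSU reference, the proposal is structurally sound, but these two details should be cited rather than re-derived along the lines sketched.
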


Iterating the above inequalities we arrive at

\begin{lem}\label{L1a}
{\rm (i)} Let $u\in W^{2,1}_2(Q_R(x^0,t^0))$ solve the equation
${\cal L}_0u=0$  in $Q_R(x^0,t^0)$ . Then
$$
|D^\alpha u|\le \frac{C}{R^{|\alpha|}}\sup_{Q_{R}(x^0,t^0)}
u\qquad \mbox{in}\quad Q_{R/2^{|\alpha|}}(x^0,t^0).
$$

{\rm (ii)} Let $u\in W^{2,1}_2(Q_R^+(x^0,t^0))$ solve the equation
${\cal L}_0u=0$  in $Q_R^+(x^0,t^0)$ and let $u|_{x_n=0}=0$. If
$\alpha_1\le 1$ then
$$
|D^\alpha u|\le \frac{C}{R^{|\alpha|}}\sup_{Q_{R}^+(x^0,t^0)}
u\qquad \mbox{in}\quad Q_{R/2^{|\alpha|}}^+(x^0,t^0).
$$
Here $C$ depends only on $\nu$ and $\alpha$.

\end{lem}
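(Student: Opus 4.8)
The plan is to obtain both parts by iterating Proposition \ref{Pr1a}; the only point requiring care is keeping track of which spatial derivatives of a solution are again solutions and, in case (ii), whether they still vanish on $\{x_1=0\}$.

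For part (i), since the coefficients $a^{ij}$ depend only on $t$, every spatial derivative $D^\gamma u$ formally satisfies ${\cal L}_0(D^\gamma u)=0$ in $Q_R(x^0,t^0)$. Granting (see the last paragraph) that $D^\gamma u$ again lies in $W^{2,1}_2$ on each subcylinder compactly contained in $Q_R(x^0,t^0)$, I would write $\alpha=e_{i_1}+\dots+e_{i_{|\alpha|}}$ and apply Proposition \ref{Pr1a}(i) repeatedly: the first application bounds $|D^{e_{i_1}}u|$ on $Q_{R/2}(x^0,t^0)$ by $\tfrac CR\sup_{Q_R(x^0,t^0)}u$; applying it to the solution $D^{e_{i_1}}u$ on $Q_{R/2}(x^0,t^0)$ bounds $|D^{e_{i_1}+e_{i_2}}u|$ on $Q_{R/4}(x^0,t^0)$ by $\tfrac{C}{R/2}\cdot\tfrac CR\sup_{Q_R(x^0,t^0)}u$, and so on. After $|\alpha|$ steps the radius has shrunk to $R/2^{|\alpha|}$ and the accumulated constant is at most $C^{|\alpha|}2^{|\alpha|(|\alpha|-1)/2}$, i.e. it depends only on $\nu$ and $\alpha$ while the $R$-dependence is exactly $R^{-|\alpha|}$; this is the claimed inequality.

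For part (ii) the obstruction is that differentiating in $x_1$ destroys the homogeneous boundary condition, whereas tangential derivatives $D_{x_2},\dots,D_{x_n}$ of a solution vanishing on $\{x_1=0\}$ are again solutions vanishing on $\{x_1=0\}$. I would therefore split $\alpha=\beta+\alpha_1e_1$ with $\beta=(0,\alpha_2,\dots,\alpha_n)$ purely tangential, and first iterate Proposition \ref{Pr1a}(ii) exactly as in part (i), $|\beta|$ times, to get
$$
|D^\beta u|\le\frac{C}{R^{|\beta|}}\sup_{Q_R^+(x^0,t^0)}u\qquad\text{in}\quad Q_{R/2^{|\beta|}}^+(x^0,t^0).
$$
If $\alpha_1=0$ this is the assertion. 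If $\alpha_1=1$, put $v=D^\beta u$; then $v\in W^{2,1}_2$ solves ${\cal L}_0v=0$ in $Q_{R/2^{|\beta|}}^+(x^0,t^0)$ and $v|_{x_1=0}=0$, so one further application of Proposition \ref{Pr1a}(ii) yields
$$
|D^\alpha u|=|D^{e_1}v|\le\frac{C\,2^{|\beta|}}{R}\,\sup_{Q_{R/2^{|\beta|}}^+(x^0,t^0)}|v|\le\frac{C}{R^{|\beta|+1}}\,\sup_{Q_R^+(x^0,t^0)}u
$$
on $Q_{R/2^{|\beta|+1}}^+(x^0,t^0)=Q_{R/2^{|\alpha|}}^+(x^0,t^0)$, as required. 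The hypothesis $\alpha_1\le1$ enters precisely here: it guarantees that at most one non-tangential differentiation is performed, after the boundary estimate has been iterated in the tangential directions.

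The one step that I expect to need genuine (though standard) justification is the regularity claim used at every stage: that $D^\gamma u$ — and, in case (ii), each tangential $D^\gamma u$ — again belongs to $W^{2,1}_2$ on compactly contained subcylinders and solves ${\cal L}_0(D^\gamma u)=0$, so that Proposition \ref{Pr1a} legitimately applies. This follows from the translation invariance of ${\cal L}_0$ in $x$ (and, in case (ii), in the directions parallel to $\{x_1=0\}$) by the usual difference-quotient argument: the $x$-difference quotients of $u$ solve the same equation with the same (homogeneous) boundary data, are bounded in $W^{2,1}_2$ on subcylinders uniformly in the increment, and converge to the corresponding derivatives; for the purely interior statement one may alternatively note that $u$ is $C^\infty$ in $x$.
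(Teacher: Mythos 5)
Your proof is correct and takes essentially the same route as the paper, which gives no details beyond "iterating the above inequalities": you iterate Proposition~\ref{Pr1a}, using part (ii) for tangential derivatives (which preserve the vanishing boundary data) and reserving the single allowed normal differentiation for the final application, exactly as the hypothesis $\alpha_1\le 1$ requires. The regularity remark at the end (difference quotients, or interior smoothness in $x$) is the standard justification the paper leaves implicit.
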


In the next lemma we give local estimates of the normal
derivatives.
\begin{lem}\label{L1b}

Let $u\in W^{2,1}_2(Q_R^+(x^0,t^0))$
 solve the equation ${\cal L}_0u=0$  in $Q_R^+(x^0,t^0)$ and let
$u|_{x_n=0}=0$. Then for $k\ge 2$ and arbitrary small
$\varepsilon>0$ the following inequality
\begin{equation}\label{Kop1a}
x_n^{k-2+\varepsilon}|D_{x_n}^k u|\le
\frac{C}{R^{2-\varepsilon}}\sup_{Q_{R}^+(x^0,t^0)} u\qquad
\mbox{in}\quad Q_{R/8^{|\alpha|}}^+(x^0,t^0)
\end{equation}
holds, where positive constant $C$ depends on $\nu$, $k$ and
$\varepsilon$.
\end{lem}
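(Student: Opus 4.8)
The plan is to reduce the estimate of the high-order normal derivative $D_{x_n}^k u$ to the first-order gradient estimate of Proposition~\ref{Pr1a}(ii) by differentiating the equation, and to handle the loss of control near the boundary $x_n=0$ by an interpolation argument that produces the weight $x_n^{k-2+\varepsilon}$ together with the arbitrarily small negative power $R^{-(2-\varepsilon)}$. The starting observation is that $v:=D_{x_n}^{k-2}u$ still solves ${\cal L}_0v=0$ in a slightly smaller half-cylinder (since the coefficients $a^{ij}$ do not depend on $x$, tangential and normal $x$-derivatives commute with ${\cal L}_0$), but $v$ need not vanish on $x_n=0$, so we may only use Proposition~\ref{Pr1a}(i) and Lemma~\ref{L1a}(i) for interior balls, not the boundary estimate. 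Thus for a point $(x,t)$ with $x_n=:d$ small, applying the interior gradient estimate on the ball of radius $\sim d$ centered at $(x,t)$ gives
\[
|D_{x_n}^{k}u(x,t)|=|D_{x_n}^2 v(x,t)|\le \frac{C}{d^{2}}\sup_{Q_{d/2}(x,t)}|v|=\frac{C}{d^{2}}\sup_{Q_{d/2}(x,t)}|D_{x_n}^{k-2}u|.
\]
So multiplying by $d^{k-2+\varepsilon}$ we must show $d^{k-2+\varepsilon}\cdot d^{-2}\sup_{Q_{d/2}}|D_{x_n}^{k-2}u|\lesssim R^{\varepsilon-2}\sup_{Q_R^+}|u|$, i.e. that $|D_{x_n}^{k-2}u|$ enjoys roughly the bound $d^{2-\varepsilon-(k-2)}$ times $\sup|u|$; this is exactly the statement at one order lower plus a gain. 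This is the structure of an induction.

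Concretely I would prove \eqref{Kop1a} by induction on $k\ge 2$, but it is cleaner to prove the slightly stronger family of estimates: for every integer $m\ge 0$ and every small $\varepsilon>0$,
\[
x_n^{(m-2)_+announced}+\varepsilon}|D_{x_n}^{m}u|\le \frac{C}{R^{2-\varepsilon}}\sup_{Q_R^+(x^0,t^0)}|u|\quad\text{in }Q_{R/8^{m}}^+(x^0,t^0),
\]
where for $m\le 2$ the weight exponent is replaced by $0$ and the claim for $m=0,1,2$ is just Lemma~\ref{L1a}(ii) (with $\alpha=(0,\dots,0,m)$, which satisfies $\alpha_1\le 1$, so the boundary estimate applies and in fact gives the unweighted bound $R^{-m}\sup|u|\le CR^{-2}\sup|u|$ after shrinking $R$, hence certainly the weighted one). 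For the inductive step from $m-2$ and $m-1$ to $m$ (with $m\ge 3$): fix $(x,t)\in Q_{R/8^{m}}^+$, set $d=x_n$. If $d\ge R/8^{m}$ we are in the "interior away from the boundary" regime and can simply invoke Lemma~\ref{L1a}(ii) directly at scale comparable to $R$ to bound $|D_{x_n}^m u|$ by $CR^{-m}\sup|u|\le CR^{-2}\sup|u|$ (here I use $\alpha_1=0\le 1$), and then $d^{m-2+\varepsilon}\le (CR)^{m-2+\varepsilon}$ absorbs into the constant after writing $R^{m-2+\varepsilon}R^{-m}=R^{\varepsilon-2}$. If $d< R/8^{m}$, apply the interior estimate of Lemma~\ref{L1a}(i) for the second-order derivative on the parabolic ball $Q_{d/4}(x,t)\subset Q_R^+$ to $v=D_{x_n}^{m-2}u$, which gives $|D_{x_n}^m u(x,t)|\le C d^{-2}\sup_{Q_{d/4}(x,t)}|D_{x_n}^{m-2}u|$; now on $Q_{d/4}(x,t)$ every point has $x_n$-coordinate between $3d/4$ and $5d/4$, so the inductive hypothesis applied at those points (valid since $Q_{d/4}(x,t)$ sits inside $Q_{R/8^{m-1}}^+$ when $d<R/8^m$) bounds $|D_{x_n}^{m-2}u|$ there by $C d^{-((m-4)_+ +\varepsilon)}R^{-(2-\varepsilon)}\sup_{Q_R^+}|u|$. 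Multiplying by $d^{m-2+\varepsilon}\cdot d^{-2}$ and checking that the $d$-powers combine to a nonnegative power (so that $d<R$ lets us bound it by an $R$-power that merges into $R^{\varepsilon-2}$) closes the induction; the bookkeeping of exponents is the only thing to be careful with, and it works because lowering the order by $2$ lowers the forbidden weight exponent by $2$ as well.

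The main obstacle is purely the exponent bookkeeping near $x_n=0$: one must verify that at each inductive step the combination $d^{\,m-2+\varepsilon}\cdot d^{-2}\cdot d^{-((m-4)_+ +\varepsilon)}$ is a nonnegative power of $d$, so that using $d<R$ (up to the fixed geometric constant $8^{m}$) one may replace it by the corresponding power of $R$ and fold everything into a single factor $R^{\varepsilon-2}$ with an updated constant $C=C(\nu,m,\varepsilon)$. For $m\ge 4$ this exponent is $(m-2+\varepsilon)-2-(m-4+\varepsilon)=0$, exactly borderline — which is why the arbitrarily small $\varepsilon>0$ is needed (it prevents the borderline logarithmic divergence that a naive $\varepsilon=0$ argument would produce when $k=2$, i.e. when estimating $D_{x_n}^2 u$ itself, where $v=u$ does vanish on the boundary but the second $x_n$-derivative is not covered by the boundary estimate since $\alpha=(0,\dots,0,2)$ has $\alpha_n=2$ and the iteration in Lemma~\ref{L1a}(ii) only handles $\alpha_1\le 1$ with no restriction on $\alpha_n$ — wait, that case is fine; the genuine borderline is the repeated halving of the distance-to-boundary scale). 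A secondary technical point is to justify that $v=D_{x_n}^{m-2}u$ indeed solves ${\cal L}_0 v=0$ with enough regularity to apply the interior estimates — this follows from interior parabolic regularity for ${\cal L}_0$ (coefficients measurable in $t$ only, smooth — constant — in $x$), exactly as used implicitly throughout Section~\ref{Rn+1}. Everything else is a direct concatenation of Proposition~\ref{Pr1a} and Lemma~\ref{L1a}.
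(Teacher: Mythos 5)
Your proposal sets up an induction on the order $m$ of the normal derivative, using interior estimates at scale $d=x_n$ to go from $D_{x_n}^{m-2}u$ to $D_{x_n}^{m}u$. The structure is plausible, but the induction does not close because the base cases $m=2$ and $m=3$ fail, and this failure is not merely "bookkeeping" — it reflects the absence of the key extra ingredient that the paper's proof supplies.

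Concretely, for $m=2$ your argument gives, at a point with $x_n=d<R$,
$$|D_{x_n}^2 u(x,t)|\le C\,d^{-2}\sup_{Q_{d/4}(x,t)}|u|\le C\,d^{-2}\sup_{Q_R^+}|u|,$$
hence $d^{\varepsilon}|D_{x_n}^2 u|\le C\,d^{\varepsilon-2}\sup|u|$. Since $\varepsilon-2<0$ and $d<R$, this is \emph{weaker} than the desired $C\,R^{\varepsilon-2}\sup|u|$, not stronger; the bound deteriorates precisely as $d\to 0$. The same happens for $m=3$: you get $d^{1+\varepsilon}|D_{x_n}^3u|\le C\,d^{\varepsilon-1}R^{-1}\sup|u|$, again the wrong direction. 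Your observation that the $d$-powers cancel to zero for $m\ge 4$ is correct, but that step only propagates a correct lower-order bound; it cannot create one. The interior gradient estimate $|D^2v|\lesssim d^{-2}\sup_{B_d}|v|$ loses exactly the power $d^{-2}$ that the weight $x_n^{k-2+\varepsilon}$ must absorb, with no slack whatsoever, and the tiny $\varepsilon$ gives you no help in the interior estimate itself. There is also a secondary misreading: the condition "$\alpha_1\le 1$" in Lemma~\ref{L1a}(ii) restricts the order of the \emph{normal} derivative, so $\alpha=(0,\dots,0,m)$ with $m=2$ is \emph{not} covered by that boundary estimate; this is the whole reason one cannot simply iterate the boundary gradient estimate to reach $k\ge 2$ normal derivatives.

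The paper's proof supplies exactly the missing gain by first establishing a boundary H\"older estimate for the gradient,
$$\sup_{Q_{R/4}^+(x^0,t^0)}\frac{|Du(x,t)-Du(y,s)|}{|x-y|^{\gamma}+|t-s|^{\gamma/2}}\le C\,R^{-1-\gamma}\sup_{Q_R^+}|u|,$$
which is obtained via a localization, the $L^p$ coercive estimate for the constant-coefficient model operator with zero Dirichlet data (Theorem~\ref{space} plus odd reflection), and a Morrey-type embedding. This estimate is uniform \emph{up to the boundary}. Combined with the interior estimate applied to $Du - Du(x^1,t^1)$ on a ball of radius $\rho\sim x^1_n$, it yields
$$|D_{x_n}^k u|\le C\,\rho^{-(k-1-\gamma)}[Du]_{C^\gamma}\le C\,\rho^{-(k-1-\gamma)}R^{-1-\gamma}\sup|u|,$$
so that multiplying by $\rho^{k-2+\varepsilon}$ with $\varepsilon=1-\gamma$ produces exactly $R^{-(2-\varepsilon)}\sup|u|$ with no residual $d$-power. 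The gain of order $\gamma$ coming from the boundary H\"older estimate is precisely what compensates for the loss in the interior estimate; your argument has no analogue of this step, and without it the induction cannot start.
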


\begin{proof}
 If $R/4\le x^0_n$ then
(\ref{Kop1a}) follows from Lemma \ref{L1a}(i). Suppose $R/4>
x^0_n$. Let us prove first that for every $\alpha\in\,]0,1[$
\begin{equation}\label{Sept3}
\sup_{Q_{R/4}^+(x^0,t^0)}\frac{
|Du(x,t)-Du(y,s)|}{|x-y|^{\alpha}+|t-s|^{\frac\alpha2}}\le
CR^{-1-\alpha}\sup_{Q_{R}^+(x^0,t^0)} |u|.
\end{equation}
Let $\eta=\eta(x,t)$  be a smooth function which is equal to $1$
for $|t|\le 1/16$, $|x|\le 1/4$ and equal to $0$ for $t\ge 1/4$,
$|x|\ge 1/2$. We put $\eta_R(x,t)=\eta((t-t^0)/R^2,(x-x^0)/R)$. We
write the equation ${\cal L}_0u=0$ as
\begin{eqnarray}\label{Sept1}
\partial_t(\eta_Ru)&-&a^{nn}\Delta(\eta_Ru)\nonumber\\
&=&\eta_R\widetilde a^{ij}D_iD_ju+
(\partial_t\eta_R)u-a^{nn}(u\,\Delta\eta_R+2D_j\eta_R\,D_ju),
\end{eqnarray}
where $\widetilde a^{ij}(t)=a^{ij}(t)-a^{nn}(t)\delta^{ij}$. We
note that for the operator $\partial_t-a^{nn}\Delta$ with zero
Dirichlet boundary condition estimate (\ref{est_space}) is also
valid since by using the odd extension of solution and the
right-hand side we can reduce the Dirichlet problem in the
half-space to the problem for odd functions in the whole space.
Therefore, applying estimate (\ref{est_space}) with $q=p$ to
equation (\ref{Sept1}) we obtain
\begin{eqnarray*}
\|\partial_tu\|_{L^p(Q^+_{R/4}(x^0,t^0))}&+&
\|D^2u\|_{L^p(Q^+_{R/4}(x^0,t^0))}
\le C\Big(\sum_{j=1}^{n-1}\|D_jDu\|_{L^p(Q^+_{R/2}(x^0,t^0))}\\
&+&R^{-1}\|Du\|_{L^p(Q^+_{R/2}(x^0,t^0))}+
R^{-2}\|u\|_{L^p(Q^+_{R/2}(x^0,t^0))}\Big).
\end{eqnarray*}
Now using  Lemma \ref{L1a}(ii) for estimating the terms in the
right-hand side we arrive at
\begin{equation}\label{Sept2}
\|\partial_tu\|_{L^p(Q^+_{R/4}(x^0,t^0))}+\|D^2u\|_{L^p(Q_{R/4}(x^0,t^0))}
\le C\,R^{\frac{n+2-2p}p}\sup_{Q_{R}(x^0,t^0)} |u|.
\end{equation}
Next, we use the following Morrey-type inequality (see
\cite[Ch.2, Lemma 3.3]{LSU})
\begin{multline*}
\sup_{Q_{R/4}(x^0,t^0)}\frac{
|Du(x,t)-Du(y,s)|}{|x-y|^{\gamma}+|t-s|^{\frac\gamma2}}\le \\
\le CR^{1-\gamma -\frac {n+2}p}\Big
(\|D^2u\|_{L^p(Q^+_{R/4}(x^0,t^0))}+R^{-2}\|u\|_{L^p(Q^+_{R/4}(x^0,t^0))}\Big),
\end{multline*}
which is valid for $p>(n+2)/(1-\gamma)$. Estimating the right-hand
side here by (\ref{Sept2}), we obtain (\ref{Sept3}).

Now we are in position to complete the proof of inequality
(\ref{Kop1a}). We start with the estimate
\begin{equation*}
\rho^{-1+k-\gamma}\sup_{Q_{\rho_1}(x^1,t^1)} |D^k_{x_n} u|\le C
\sup_{Q_{2\rho}(x^1,t^1)} \frac {|Du(x,t)-Du(y,s)|}
{|x-y|^{\gamma}+|t-s|^{\frac\gamma2}},
\end{equation*}
(here $(x^1,t^1)\in Q^+_{R/8^{|\alpha|}}(x^0,t^0)$,
$\rho_1=\rho/2^{k-1}$ and $\rho=x^1_n/2$) which follows from Lemma
\ref{L1a}(i). Since $Q_{2\rho}(x^1,t^1)\subset
Q^+_{R/4}(x^0,t^0)$, we obtain from the last inequality that
\begin{equation*}
\rho^{-1+k-\gamma}\sup_{Q_{\rho_1}(x^1,t^1)} |D^k_{x_n} u|\le
C\sup_{Q^+_{R/4}(x^0,t^0)} \frac
{|Du(x,t)-Du(y,s)|}{|x-y|^{\gamma}+|t-s|^{\frac\gamma2}}.
\end{equation*}
This together with (\ref{Sept3}) leads to (\ref{Kop1a}) with
$\varepsilon=1-\gamma$.
\end{proof}

Combining Lemmas \ref{L1a} and \ref{L1b}, we arrive at
\begin{kor}

Let $u$ satisfy the assumptions of {\rm Lemma \ref{L1b}}. If
$\alpha_1\ge 2$ then for arbitrary small $\varepsilon>0$
\begin{equation}\label{Oct2}
x_n^{\alpha_1-2+\varepsilon}|D^\alpha u|\le
\frac{C}{R^{|\alpha|-\alpha_1+2-\varepsilon}}\sup_{Q_{R}^+(x^0,t^0)}
u\qquad \mbox{in}\quad Q_{R/8^{|\alpha|}}^+(x^0,t^0),
\end{equation}
where $C$ depends on $\nu$, $\alpha$ and $\varepsilon$.
\end{kor}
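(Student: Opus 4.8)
The plan is to combine the two previous results via the factorization $D^\alpha = D_{x_n}^{\alpha_1}\circ D'^{\alpha'}$, where $\alpha'=(\alpha_1,\dots,\alpha_{n-1},0)$ is the tangential part of the multi-index, so that $|\alpha'|=|\alpha|-\alpha_1$. The idea is that tangential derivatives preserve the zero Dirichlet condition and hence are controlled without any weight by Lemma \ref{L1a}(ii), while the remaining purely normal derivatives of order $\alpha_1\ge 2$ are controlled, with the weight $x_n^{\alpha_1-2+\varepsilon}$, by Lemma \ref{L1b}. The one subtlety is that $v:=D'^{\alpha'}u$ is again a solution of ${\cal L}_0v=0$ vanishing on $x_n=0$ (since ${\cal L}_0$ has coefficients depending only on $t$, it commutes with tangential differentiation), so Lemma \ref{L1b} genuinely applies to $v$.

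Concretely, first I would fix $(x^1,t^1)\in Q^+_{R/8^{|\alpha|}}(x^0,t^0)$ and set $\rho=x^1_n$ (or a fixed fraction of it). Applying Lemma \ref{L1a}(ii) to $u$ on the cylinder $Q^+_{R/2}(x^0,t^0)$, which contains $Q^+_{\rho}(x^1,t^1)$ once the constants are arranged, gives
\begin{equation*}
\sup_{Q^+_{R/2^{|\alpha'|}}(x^0,t^0)}|D'^{\alpha'}u|\le \frac{C}{R^{|\alpha'|}}\sup_{Q^+_{R}(x^0,t^0)}u.
\end{equation*}
Then I would apply Lemma \ref{L1b} to $v=D'^{\alpha'}u$ with $k=\alpha_1$ on a cylinder of radius $\sim R$ (keeping in mind the radius-shrinking factors $8^{|\alpha|}$), obtaining
\begin{equation*}
x_n^{\alpha_1-2+\varepsilon}|D_{x_n}^{\alpha_1}v|\le \frac{C}{R^{2-\varepsilon}}\sup v\le \frac{C}{R^{2-\varepsilon}}\cdot\frac{C}{R^{|\alpha'|}}\sup_{Q^+_{R}(x^0,t^0)}u
\end{equation*}
on $Q^+_{R/8^{|\alpha|}}(x^0,t^0)$. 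Since $D_{x_n}^{\alpha_1}v=D^\alpha u$ and $|\alpha'|=|\alpha|-\alpha_1$, the exponent of $R$ is $-(2-\varepsilon)-(|\alpha|-\alpha_1)=-(|\alpha|-\alpha_1+2-\varepsilon)$, which is exactly (\ref{Oct2}).

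The main obstacle — really a bookkeeping nuisance rather than a deep point — is tracking the nested domain-shrinking constants so that all the intermediate cylinders are honestly contained in one another and in $Q^+_R(x^0,t^0)$; this is why the statement is phrased on $Q^+_{R/8^{|\alpha|}}$. One applies Lemma \ref{L1a}(ii) on scale $R$, landing in $Q^+_{R/2^{|\alpha'|}}$, then Lemma \ref{L1b} on that smaller scale, landing in a still smaller cylinder; choosing $8^{|\alpha|}$ comfortably dominates the product $2^{|\alpha'|}\cdot 8^{|\alpha_1|}$ of all shrinkage factors, so everything fits. A minor point to note is that Lemma \ref{L1b} as stated requires $k\ge2$, which is guaranteed here by the hypothesis $\alpha_1\ge2$; the case $\alpha_1<2$ would instead be covered directly by Lemma \ref{L1a}(ii) but is excluded from the corollary. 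After this, the constant $C$ depends only on $\nu$, $\alpha$ and $\varepsilon$ as claimed, since each invoked estimate has a constant of that form.
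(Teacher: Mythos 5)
Your proposal is correct and is exactly the argument the paper intends: the paper's entire ``proof'' of the Corollary is the single sentence ``Combining Lemmas \ref{L1a} and \ref{L1b}, we arrive at,'' and your factorization $D^\alpha=D_{n}^{\alpha_1}\circ D'^{\alpha'}$ --- bounding the purely tangential part $D'^{\alpha'}u$ via Lemma \ref{L1a}(ii), observing that $v=D'^{\alpha'}u$ still solves ${\cal L}_0v=0$ with zero boundary trace because ${\cal L}_0$ has $t$-only coefficients, and then applying Lemma \ref{L1b} with $k=\alpha_1$ to $v$ --- is the natural (and correct) way to carry that out, with the right exponents $-(|\alpha|-\alpha_1+2-\varepsilon)$ on $R$ and the cylinder-shrinking factor $8^{|\alpha|}$ dominating $2^{|\alpha'|}\cdot 8^{\alpha_1}$. (There is a small notational clash, inherited from the paper's own inconsistent use of $\alpha_1$ for the normal component while writing $x_n$ for the normal variable, but this does not affect the argument.)
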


Now let us turn to estimating of derivatives of the Green function.

\begin{lem}\label{LTT2}
The following estimate for the Green function is valid for $ s<t$:
\begin{equation}\label{May1}
|D_x^\alpha D_y^\beta\Gamma^{\cal D}(x,y;t;s)|\le C\,(t-s)^{-\frac
{n+|\alpha|+|\beta|}2}
\cdot\exp\left(-\frac{\sigma|x-y|^2}{t-s}\right),
\end{equation}
where the positive constant $\sigma$ depends only on the
ellipticity constant $\nu$ and $C$ may depend on $\nu$, $\alpha$
and $\beta$, provided one of the following four conditions is
fulfilled:

{\rm (i)}   $\alpha$ and $\beta$ are arbitrary,  and
$x_n\ge\sqrt{(t-s)/8}$, $y_n\ge\sqrt{(t-s)/8}$;

{\rm (ii)}  $\alpha$ and $\beta$  satisfy
 $\alpha_1\le1$ and $\beta_1\le1$ respectively and $x,y\in \mathbb R^n_+$;

{\rm (iii)} $\beta$ is arbitrary, $\alpha$ satisfies
 $\alpha_1\le 1$ and  $y_n\ge\sqrt{(t-s)/8}$;

{\rm (iv)}  $\alpha$ is arbitrary,  $\beta$ satisfies
 $\beta_1\le 1$ and  $x_n\ge\sqrt{(t-s)/8}$.

\end{lem}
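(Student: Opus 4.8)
The plan is to represent $\Gamma^{\cal D}$ via the method of images relative to the fixed coefficients and then exploit the local interior/boundary estimates already established (Lemma \ref{L1a} and its corollary, together with the Gaussian bound \eqref{Okt1}). Fix $(y,s)$ and set $v(x,t)=\Gamma^{\cal D}(x,y;t,s)$; this is a solution of ${\cal L}_0v=0$ in ${\mathbb R}^n_+\times(s,\infty)$ with $v|_{x_n=0}=0$, and by \eqref{Okt1} it satisfies $|v(x,t)|\le C(t-s)^{-n/2}\exp(-\sigma|x-y|^2/(t-s))$. The key observation is that the parabolic scaling natural to a point $(x^0,t^0)$ with $t^0-s\sim r^2$ is of size $r$, and on cylinders $Q_r^+(x^0,t^0)$ with $r\sim\sqrt{t^0-s}$ the sup of $|v|$ is controlled by $C(t^0-s)^{-n/2}\exp(-\sigma'|x^0-y|^2/(t^0-s))$ (shrinking $\sigma$ slightly absorbs the variation of the Gaussian over such a cylinder, using that $|x-y|^2$ changes by at most a comparable amount). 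This is the device that converts every local derivative bound into the claimed pointwise estimate \eqref{May1}.

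Now I would treat the four cases by choosing, for each, the appropriate local estimate and the appropriate radius $R$. In case (i), $x_n\ge\sqrt{(t-s)/8}$ means the point is at distance $\gtrsim\sqrt{t-s}$ from the boundary, so $Q_R(x^0,t^0)$ with $R=c\sqrt{t-s}$ lies entirely in the interior; apply Lemma \ref{L1a}(i) to $D_x^\alpha$ (for any $\alpha$), and for $D_y^\beta$ repeat with $u(y,t):=\Gamma^{\cal D}(x,y;t,s)$ viewed backward in $s$ — note $\Gamma^{\cal D}$ as a function of $(y,s)$ solves the backward adjoint equation, which since $a^{ij}$ depends only on time has the same structure, and $y_n\ge\sqrt{(t-s)/8}$ again keeps us interior. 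In case (ii), where $\alpha_1\le1$ and $\beta_1\le1$, use Lemma \ref{L1a}(ii) (the boundary version) in the $x$-variables with $R=c\sqrt{t-s}$ possibly truncated by $x_n$, and symmetrically in $y$; the constraint $\alpha_1\le1$ is exactly what Lemma \ref{L1a}(ii) permits. Cases (iii) and (iv) are hybrids: one variable is interior (so all derivatives are allowed via the interior estimate), the other is near the boundary with at most one normal derivative (so the boundary estimate applies). In each case one first differentiates in the interior variable, obtaining a new solution to which the boundary estimate in the other variable is applied.

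The main technical obstacle is bookkeeping the interaction of the Gaussian tail with the iteration of local estimates: each application of Lemma \ref{L1a} shrinks the cylinder by a fixed factor, so after $|\alpha|+|\beta|$ steps one works on $Q_{R/2^{|\alpha|+|\beta|}}$, and one must verify that (a) this smaller cylinder still contains the target point, which fixes the admissible range of $R$ up to the dimensional constant hidden in $8^{|\alpha|}$, and (b) the Gaussian factor $\exp(-\sigma|x^0-y|^2/(t^0-s))$ evaluated at the center dominates, after a harmless reduction of $\sigma$, the supremum of the Gaussian over the whole starting cylinder $Q_R^+$. Point (b) requires the elementary inequality that for $|x-x^0|\le R\le c\sqrt{t-s}$ one has $|x-y|^2\ge\tfrac12|x^0-y|^2-CR^2$, so $\exp(-\sigma|x-y|^2/(t-s))\le C\exp(-\tfrac\sigma2|x^0-y|^2/(t-s))$; this is routine but must be stated. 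I expect no essential difficulty beyond this, since the needed regularity theory is fully packaged in the preceding lemmas; the write-up is mostly a matter of organizing the four cases and the scaling.
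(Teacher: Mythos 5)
Your proposal is correct and follows essentially the same route as the paper: start from the Gaussian bound \eqref{Okt1} on $\Gamma^{\cal D}$ itself, bootstrap through the local interior/boundary estimates of Lemma \ref{L1a}(i),(ii) on parabolic cylinders of radius $\sim\sqrt{t-s}$ (the paper does this after first reducing to $t-s=1$ by homogeneity, which is equivalent to your working on scaled cylinders directly), handle the $y$-derivatives via the symmetry of $\Gamma^{\cal D}$ (equivalently, your backward-adjoint viewpoint), and absorb the variation of the Gaussian over the cylinder by shrinking $\sigma$. The opening phrase ``method of images'' is a misnomer since no reflection is actually used, but the strategy you then describe is the one in the paper.
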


\begin{proof}
It is sufficient to prove the estimate for $s=0$.

 Let $|\alpha|=|\beta|=0$. Then estimate (\ref{May1}) is a
consequence of estimate (\ref{Okt1}).

(i) Let $\beta=0$. First, we suppose that $x_n\ge 1/2$. Using
Lemma \ref{L1a}(i) and estimate (\ref{May1}) for
$|\alpha|=|\beta|=0$, we obtain
\begin{equation}\label{Aug1}
|D_x^\alpha\Gamma^{\cal D}(x,y,1,0)|\le
C\sup_{Q_{1/2}(x,1)}|\Gamma^{\cal D}(\cdot,y,\cdot,0)|\le C\exp
(-\sigma|x-y|^2).
\end{equation}
Now, estimate (\ref{May1}) for $x_n\ge\sqrt{t/8}$ follows by
homogeneity.

Since the Green function is symmetric, we obtain also estimate
(\ref{May1}) in the case $\alpha=0$ and $\beta$ is arbitrary.

To prove (\ref{May1})\index{} in general case, we consider the
function $G_\beta(x,y,t)=D_y^\beta \Gamma^{\cal D}(x,y,t,0)$.
Reasoning as above we arrive at estimate (\ref{Aug1}) with
$\Gamma^{\cal D}$ replaced by $G_\beta$. Certainly at the last
step we must use (\ref{May1}) with $\alpha=0$ which is already
proved. So, the case (i) is completed.

(ii) Let  first $\beta=0$. By homogeneity it suffices to prove
(\ref{May1}) for $t=1$ . Using Lemma \ref{L1a}(ii) and estimate
(\ref{May1}) for $|\alpha|=|\beta|=0$, we obtain  estimate
(\ref{Aug1}), which implies (\ref{May1}) for $\beta=0$. Since the
Green function is symmetric with respect to $x$ and $y$, we obtain
also estimate (\ref{May1}) for $\alpha=0$. In order to handle the
general case we apply Lemma \ref{L1a}(ii) to the function
$D_y^\beta\Gamma^{\cal D}(x,y,t,0)$ and using estimate
(\ref{May1}) for $\alpha=0$, we obtain (\ref{Aug1}) with
$\Gamma^{\cal D}$ replaced by $D_y^\beta\Gamma^{\cal D}$. By
homogeneity of the Green function we arrive at (\ref{May1}).

The cases (iii) and (iv) are considered similarly.
\end{proof}

\medskip

Below we use the notations
$$
{\cal R}_x=\frac{x_n}{x_n+\sqrt{t-s}},\qquad {\cal
R}_y=\frac{y_n}{y_n+\sqrt{t-s}}.
$$

\begin{sats}\label{T2}
For $x,y\in\mathbb R_+^n$ and $ s<t$ the following estimate is
valid:
\begin{equation}\label{May3}
|D_{x}^{\alpha}D_{y}^{\beta} \Gamma^{\cal D}(x,y;t;s)|\le
C\,\frac{{\cal R}_x^{2-\alpha_1-\varepsilon} {\cal
R}_y^{2-\beta_1-\varepsilon}}{(t-s)^{\frac {n+|\alpha|+|\beta|}2}}
\,\exp \left(-\frac{\sigma|x-y|^2}{t-s}\right),
\end{equation}
where $\sigma$ is the same as in {\rm Lemma \ref{LTT2}},
$\varepsilon$ is an arbitrary small positive number  and $C$ may
depend on $\nu$, $\alpha$, $\beta$ and $\varepsilon$. If
$\alpha_1\le 1$ {\rm(}or $\beta_1\le 1${\rm)} then
$2-\alpha_1-\varepsilon$ {\rm(}$2-\beta_1-\varepsilon${\rm)} must
be replaced by $1-\alpha_1$ {\rm(}$1-\beta_1${\rm)} respectively
in the corresponding exponents.
\end{sats}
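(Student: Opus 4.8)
\textbf{Proof plan for Theorem \ref{T2}.}

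The plan is to bootstrap from the four regimes already established in Lemma \ref{LTT2} by interpolating in the two ``boundary-layer'' variables $\mathcal R_x$ and $\mathcal R_y$ separately. First I would reduce to $s=0$ and, by homogeneity of $\Gamma^{\cal D}$ (the scaling $\Gamma^{\cal D}(x,y;t,0)=t^{-n/2}\Gamma^{\cal D}(x/\sqrt t,y/\sqrt t;1,0)$ up to the time-dependent matrix, which only affects constants through $\nu$), it suffices to prove (\ref{May3}) for $t=1$, in which case $\mathcal R_x\asymp\min\{x_n,1\}$ and $\mathcal R_y\asymp\min\{y_n,1\}$ and the Gaussian factor is $\exp(-\sigma|x-y|^2)$. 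The estimate is then genuinely a statement only in the strip where $x_n$ or $y_n$ is small, say $x_n,y_n\le 1/\sqrt8$, since when either coordinate is $\ge 1/\sqrt8$ Lemma \ref{LTT2}(i),(iii),(iv) already give the bound with the relevant $\mathcal R$-factor equal to a constant.

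The core step is the single-variable case, say $\beta=0$ and $\alpha$ arbitrary with $\alpha_1\ge 2$ (the cases $\alpha_1\le1$ are handled by Lemma \ref{LTT2}(ii) directly, which gives the stated $\mathcal R_x^{1-\alpha_1}$). Fix $y$ with $y_n\ge 1/\sqrt8$ first. Apply the Corollary (estimate (\ref{Oct2})) to the function $u(x,t)=\Gamma^{\cal D}(x,y;t,0)$, which solves ${\cal L}_0u=0$ in $Q_R^+(x^0,t^0)$ whenever this cylinder avoids the pole, i.e. for $R\sim x_n^0$; taking $R=c\,x_n^0$ and using the sup bound $\sup_{Q_R^+}|u|\le C\exp(-\sigma|x-y|^2)$ from (\ref{Okt1}) (with $\sigma$ slightly decreased, absorbing the cylinder radius into the exponent), (\ref{Oct2}) yields
\begin{equation*}
(x_n)^{\alpha_1-2+\varepsilon}\,|D_x^\alpha\Gamma^{\cal D}(x,y;1,0)|\le C\,(x_n)^{\alpha_1-|\alpha|-2+\varepsilon}\exp(-\sigma|x-y|^2),
\end{equation*}
which rearranges to $|D_x^\alpha\Gamma^{\cal D}|\le C\,x_n^{2-\alpha_1-\varepsilon}\exp(-\sigma|x-y|^2)$, i.e. (\ref{May3}) with $\mathcal R_x^{2-\alpha_1-\varepsilon}$ and $\mathcal R_y$ replaced by the constant $1$. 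To remove the restriction $y_n\ge1/\sqrt8$, I would then repeat the argument in the $y$ variable: apply the Corollary again, now to $G(y)=D_x^\alpha\Gamma^{\cal D}(x,y;1,0)$ as a function of $y$ (it solves the adjoint equation, which by the odd-reflection remark has the same structure and the same local estimates), using as input the bound just obtained for $y_n\ge1/\sqrt8$. This produces the factor $\mathcal R_y^{2-\beta_1-\varepsilon}$ (or $\mathcal R_y^{1-\beta_1}$ when $\beta_1\le1$, via Lemma \ref{LTT2}(ii) applied in $y$), and multiplying the two strip estimates together with Lemma \ref{LTT2}(i) in the remaining region gives the full product form of (\ref{May3}).

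The main obstacle is bookkeeping the interplay of the two interpolation directions so that the $\mathcal R_x$- and $\mathcal R_y$-gains genuinely multiply rather than merely each appearing separately: one must run the $y$-iteration on an estimate that already carries the sharp $x_n$-power, and check that the local parabolic estimates (Lemma \ref{L1a}(ii), the Corollary) may be invoked in the half-space $Q^+$ for the Green function with a pole at an interior point $y$, which requires the cylinder radius $R$ to be comparable to $\operatorname{dist}$ to the pole \emph{and} small relative to the distance scale $\sqrt{t}$ — hence the appearance of $Q^+_{R/8^{|\alpha|}}$ and the need to track that the Gaussian weight survives each rescaling with a uniformly positive $\sigma$. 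The $\varepsilon$-loss is unavoidable and enters exactly through (\ref{Kop1a})/(\ref{Oct2}), where $\varepsilon=1-\gamma$ comes from the Morrey embedding exponent; I would simply note that $\varepsilon>0$ is arbitrary and fixed throughout. The cases with $\alpha_1\le1$ or $\beta_1\le1$ are cleaner because Lemma \ref{LTT2}(ii) supplies the non-$\varepsilon$ powers directly, and the same two-step iteration applies verbatim.
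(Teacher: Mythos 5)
Your overall bootstrap strategy — first extract the $\mathcal R_x$-factor via the boundary local estimate (\ref{Oct2}), then run the argument again in the $y$-variable to pick up the $\mathcal R_y$-factor — is the same skeleton as the paper's proof. However, there are three concrete errors/gaps.

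First, the choice $R\sim x_n^0$ in the application of (\ref{Oct2}) is wrong, and the displayed inequality is algebraically inconsistent with the line that follows it. For $t^0=1$, $s=0$ the cylinder $Q_R^+(x^0,1)$ avoids the pole $(y,0)$ as soon as $R<1$, regardless of $x_n^0$ or $y$; there is no reason to shrink $R$ with $x_n^0$. On the contrary, if $R\lesssim x_n^0$ then $Q_R^+$ no longer touches $\{x_n=0\}$, (\ref{Oct2}) collapses to the interior estimate from Lemma \ref{L1a}(i), and your first displayed inequality rearranges to $|D_x^\alpha\Gamma^{\cal D}|\le C\,x_n^{-|\alpha|}\exp(-\sigma|x-y|^2)$, not to $C\,x_n^{2-\alpha_1-\varepsilon}\exp(-\sigma|x-y|^2)$. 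The correct move (as in the paper's proof of (\ref{May3p})) is to take a \emph{fixed} radius, e.g.\ $R=1/2$, and assume $x_n\le 1/2$ so that the cylinder genuinely abuts the boundary; then (\ref{Oct2}) gives $x_n^{\alpha_1-2+\varepsilon}|D_x^\alpha\Gamma^{\cal D}(x,y;1,0)|\le C\exp(-\sigma|x-y|^2)$ directly.

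Second, the case $\alpha_1=0$ is not covered by Lemma \ref{LTT2}(ii). That lemma gives a bound without any $\mathcal R_x$-factor, so it supplies $\mathcal R_x^{1-\alpha_1}$ only when $\alpha_1=1$. For $\alpha_1=0$ you need the extra factor $\mathcal R_x^1$, which the paper obtains from a fundamental-theorem-of-calculus argument: write $\Gamma^{\cal D}(x,y;1,0)=\int_0^{x_n}D_\tau\Gamma^{\cal D}(x',\tau,y;1,0)\,d\tau$ and invoke the already-established $\alpha_1=1$ estimate inside the integral. This step is missing from your plan, and your claim that Lemma \ref{LTT2}(ii) ``supplies the non-$\varepsilon$ powers directly'' is incorrect as stated.

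Third, the step where you apply (\ref{Oct2}) ``in the $y$-variable'' needs more care than a parenthetical remark. The function $(y,s)\mapsto D_x^\alpha\Gamma^{\cal D}(x,y;t,s)$ satisfies the \emph{backward} parabolic (adjoint) equation with homogeneous Dirichlet data on $y_n=0$, so one must perform a time reversal before the local estimates of Section~\ref{Rn+1} apply; this has nothing to do with the odd-reflection device, which concerns the boundary condition. The paper sidesteps this entirely by using the symmetry $\Gamma^{\cal D}(x,y;t,s)=\Gamma^{\cal D}(y,x;t,s)$ to transfer (\ref{May3p}), (\ref{May3pp}) to the $\beta$-derivatives, and then re-running the proof of Lemma \ref{LTT2} with the improved input (yielding (\ref{May1p})) before finishing with a second pass through the proof of (\ref{May3p}), (\ref{May3pp}). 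Your route can likely be made to work, but it is not the ``cleaner'' path you suggest, and the adjoint issue must be addressed explicitly.
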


\begin{proof}
It is sufficient to prove the estimate for $s=0$.

First let us prove the estimate
\begin{equation}\label{May3p}
|D_{x}^{\alpha}\Gamma^{\cal D}(x,y;t;0)|\le C\,\frac{{\cal
R}_x^{2-\alpha_1-\varepsilon}}{t^{\frac{n+|\alpha|}2}}
\,\exp\left(-\frac{\sigma|x-y|^2}{t}\right)
\end{equation}
for $\alpha_1\ge 2$ and
\begin{equation}\label{May3pp}
|D_{x}^{\alpha}\Gamma^{\cal D}(x,y;t;0)|\le C\,\frac{{\cal
R}_x^{1-\alpha_1}}{t^{\frac{n+|\alpha|}2}}\,\exp
\left(-\frac{\sigma|x-y|^2}{t}\right)
\end{equation}
for $\alpha_1\le 1$.

Consider the case  $\alpha_1\ge 2$. If $x_n\le 1/2$ then using
estimate (\ref{Oct2}) we obtain
\begin{multline*}
|D_x^\alpha\Gamma^{\cal D}(x,y;1,0)|\le
Cx_n^{2-\alpha_1-\varepsilon}
\sup_{Q_{1/2}^+(x,1)}|\Gamma^{\cal D}(\cdot,y;\cdot,0)|\le\\
\le Cx_n^{2-\alpha_1-\varepsilon}\exp (-\sigma|x-y|^2),
\end{multline*}
which implies (\ref{May3p}) for $x_n\le \sqrt{t/8}$  by
homogeneity. If $x_n>\sqrt{t/8}$ estimate (\ref{May3p}) follows
from Lemma \ref{LTT2}(iv). If $\alpha_1=1$  estimate
(\ref{May3pp}) follows from Lemma \ref{LTT2}(ii). It remains to
consider  the case $\alpha_1=0$. If $x_n\ge\sqrt{t/8}$ then
estimate (\ref{May3pp}) follows from Lemma \ref{LTT2}(ii). Let
$x_n<\sqrt{1/8}$. Then
\begin{eqnarray*}
&&|\Gamma^{\cal D}(x,y;1,0)|=|\int\limits_0^{x_n}D_\tau\Gamma^{\cal
D}(x',\tau,y;1,0)d\tau
|\\
&&\le C\int\limits_0^{x_n}\exp (-\sigma|\tau-y_n|^2)d\tau\exp
(-\sigma|x'-y'|^2)\le Cx_n\exp (-\sigma|x-y|^2),
\end{eqnarray*}
where we applied estimate (\ref{May3pp}) with $\alpha_1=1$. Using
again the homogeneity argument, we arrive at (\ref{May3pp}) for
$\alpha_1=0$.

Reference to the symmetry of the Green function implies
(\ref{May3}) for $\alpha=0$ from (\ref{May3p}) and (\ref{May3pp}).
Now repeating the proof of Lemma \ref{LTT2} but using inequality
(\ref{May3}) with $\alpha=0$ instead of (\ref{May1}) with
$\alpha=0$ we arrive at the estimate
\begin{equation}\label{May1p}
|D_x^\alpha D_y^\beta\Gamma^{\cal D}(x,y;t,s)|\le C\,\frac{{\cal
R}_y^{2-\beta_1-\varepsilon}}{(t-s)^{\frac{n+|\alpha|+|\beta|}2}}\,\exp
\left(-\frac{\sigma|x-y|^2}{t-s}\right)
\end{equation}
in the cases (i) $\alpha_1\le 1$ and (ii) $\alpha$ is arbitrary
and $x_n\ge\sqrt{t/8}$. Moreover, $2-\beta_1-\varepsilon$ must be
replaced by $1-\beta_1$ when $\beta_1\le 1$.

Finally, repeating the proof of estimates (\ref{May3p}) and
(\ref{May3pp}) but using inequality (\ref{May1p}) instead of
(\ref{May1}) we arrive at (\ref{May3}).
\end{proof}

\section{The weighted estimates in a half-space}\label{Rn+2}

The main result of this section, which is equivalent to estimate
(\ref{Jan5}) and an analogous estimate for the $\widetilde
L_{p,q}$-norms, is Theorem \ref{halfspace}. We precede it by the
following three lemmas which constitute main steps in its proof.

\begin{lem}
Let $\mu\in \mathbb R$, $s<t$ and $x_n>\sqrt{t-s}$,
$y_n>\sqrt{t-s}$. Then the following estimates are valid:
\begin{multline}\label{calG}
\left|\frac {x_n^{\mu}}{y_n^{\mu}}\, D^2_x\Gamma^{\cal D}(x,y;t,s)-
D^2_x\Gamma(x,y;t,s)\right|\\
\le C\frac { y_n^{-1}} {(t-s)^{\frac {n+1}2}} \exp
\left(-\frac{\sigma_0|x-y|^2}{t-s} \right),
\end{multline}
\begin{multline}\label{D2y_calG}
\left|\frac {x_n^{\mu}}{y_n^{\mu}}\, D^2_xD^2_y\Gamma^{\cal
D}(x,y;t,s)-
D^2_xD^2_y\Gamma(x,y;t,s)\right|\\
\le C\frac {y_n^{-1}}{(t- s)^{\frac {n+3}2}} \exp
\left(-\frac{\sigma_0|x-y|^2}{t- s} \right)
\end{multline}
and
\begin{multline}\label{partials_calG}
\left|\frac {x_n^{\mu}}{y_n^{\mu}}\, D^2_x\partial_s\Gamma^{\cal D}(x,y;t,s)-
D^2_x\partial_s\Gamma(x,y;t,s)\right| \\
\le C\frac { y_n^{-1}}{(t- s)^{\frac {n+3}2}} \exp
\left(-\frac{\sigma_0|x-y|^2}{t- s} \right),
\end{multline}
where the positive constant $\sigma_0$ depends only on $\nu$ and
$C$ may depend on $\nu$ and $\mu$.
\end{lem}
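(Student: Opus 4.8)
The plan is to reduce all three estimates to a single mechanism: the difference between the half-space Green function and the whole-space Green function near the region $x_n,y_n>\sqrt{t-s}$ is controlled by a ``reflected'' term that is exponentially small, plus the controlled discrepancy coming from the weight factor $x_n^\mu/y_n^\mu$. Since throughout we are in the regime $x_n>\sqrt{t-s}$ and $y_n>\sqrt{t-s}$, all four hypotheses of Lemma~\ref{LTT2} are available (in particular case (i) applies to $\Gamma^{\cal D}$ with arbitrary multi-indices), so $\Gamma^{\cal D}$ and all its derivatives entering the statement satisfy the same Gaussian bounds \eqref{May1} as $\Gamma$ itself. It therefore suffices to estimate $D^2_x(\Gamma^{\cal D}-\Gamma)$, $D^2_xD^2_y(\Gamma^{\cal D}-\Gamma)$, $D^2_x\partial_s(\Gamma^{\cal D}-\Gamma)$ pointwise, and separately to handle the extra term $(x_n^\mu y_n^{-\mu}-1)D^2_x\Gamma^{\cal D}$ and its analogues.

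First I would dispose of the weight discrepancy. On the set $x_n,y_n>\sqrt{t-s}$ write
$$
\frac{x_n^\mu}{y_n^\mu}-1=\int_{y_n}^{x_n}\mu\,\frac{r^\mu}{y_n^\mu}\,\frac{dr}{r}.
$$
Because both $x_n$ and $y_n$ exceed $\sqrt{t-s}$, on the segment joining them (after trivial monotonicity considerations in $r$) one gets $|x_n^\mu y_n^{-\mu}-1|\le C|x_n-y_n|\,(t-s)^{-1/2}\cdot(\text{bounded factor})$, and then $|x_n-y_n|(t-s)^{-1/2}$ is absorbed into a slightly weaker Gaussian $\exp(-\sigma_0|x-y|^2/(t-s))$ with $\sigma_0<\sigma$, at the cost of a factor that I can further bound by $C(t-s)^{1/2}y_n^{-1}$ (again using $y_n>\sqrt{t-s}$). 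Combining this with the Gaussian bound for $D^2_x\Gamma^{\cal D}$ gives precisely the right-hand side of \eqref{calG}; the same bookkeeping works for \eqref{D2y_calG} and \eqref{partials_calG} with $(t-s)^{-(n+1)/2}$ replaced by $(t-s)^{-(n+3)/2}$, since two extra $y$-derivatives or one extra $s$-derivative each cost a factor $(t-s)^{-1}$ in the Gaussian bound but are compensated exactly by the weaker decay demanded on the right. Here I use that $\partial_s\Gamma^{\cal D}=-\partial_t\Gamma^{\cal D}+\text{(lower order from time-dependence of }A)$; more directly, estimate \eqref{May0'} has its half-space analogue by the same iteration argument as Lemma~\ref{LTT2}, so $|\partial_sD^2_x\Gamma^{\cal D}|\le C(t-s)^{-(n+3)/2}\exp(-\sigma|x-y|^2/(t-s))$ in the region considered.

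The remaining, and genuinely nontrivial, piece is $D^2_x(\Gamma^{\cal D}-\Gamma)$ itself. The function $w(x,y;t,s):=\Gamma(x,y;t,s)-\Gamma^{\cal D}(x,y;t,s)$, for fixed $(y,s)$, solves ${\cal L}_0w=0$ in $\{x_n>0\}\times(s,\infty)$ with boundary value $w|_{x_n=0}=\Gamma(x',0,y;t,s)$. On the boundary $x_n=0$, since $y_n>\sqrt{t-s}$, the Gaussian bound \eqref{May0} gives $|\Gamma(x',0,y;t,s)|\le C(t-s)^{-n/2}\exp(-\sigma|y_n|^2/(t-s))\exp(-\sigma|x'-y'|^2/(t-s))\le C(t-s)^{1/2}y_n^{-1}\cdot(t-s)^{-n/2}\exp(-\sigma_0(\cdots))$, using $\exp(-\sigma y_n^2/(t-s))\le C\sqrt{t-s}\,y_n^{-1}$; more crudely one uses $y_n>\sqrt{t-s}$ to write $(t-s)^{-n/2}\le y_n^{-1}(t-s)^{-(n-1)/2}$ and absorbs the leftover Gaussian in $y_n$. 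I then invoke the interior/boundary gradient iteration of Lemma~\ref{L1a}(i): in the region $x_n>\sqrt{t-s}$, the point $x$ is at distance comparable to $x_n\gtrsim\sqrt{t-s}$ from the lateral boundary and from the initial time, so a parabolic cylinder $Q_{cx_n}(x,t)$ of radius a fixed fraction of $x_n$ fits inside the region where $w$ is a caloric solution with the above boundary data controlled by its maximum; applying Lemma~\ref{L1a}(i) twice gives $|D^2_x w(x,y;t,s)|\le Cx_n^{-2}\sup|w|$ over that cylinder, and $\sup|w|$ over the cylinder is in turn dominated by the maximum principle via the boundary data $\Gamma(\cdot,0,y;\cdot,\cdot)$, which — by the Gaussian decay just noted — contributes $C y_n^{-1}(t-s)^{-(n-1)/2}\exp(-\sigma_0|x-y|^2/(t-s))$; combined with $x_n^{-2}\le C(t-s)^{-1}$ this yields the bound $Cy_n^{-1}(t-s)^{-(n+1)/2}\exp(-\sigma_0|x-y|^2/(t-s))$ of \eqref{calG}. (One must track that the exponential decay in $|x-y|$ survives: the sup of $\exp(-\sigma|z-y|^2/(t-s))$ over $z\in Q_{cx_n}(x,t)$ is comparable to $\exp(-\sigma_0|x-y|^2/(t-s))$ because $x_n\le|x-y|+y_n$ and crude splitting of the quadratic form handles the shift; this is where the slightly smaller constant $\sigma_0$ is spent.) For \eqref{D2y_calG} one applies the same argument to $D^2_y w$, which still solves ${\cal L}_0(D^2_yw)=0$ in $x$ with boundary data $D^2_y\Gamma(x',0,y;t,s)$, now carrying two extra powers of $(t-s)^{-1/2}$ in its Gaussian bound — hence the $(t-s)^{-(n+3)/2}$. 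For \eqref{partials_calG} one uses that $\partial_sw$ also satisfies the homogeneous equation in $x$ away from $\{t=s\}$, with boundary data $\partial_s\Gamma(x',0,y;t,s)$ enjoying the bound \eqref{May0'}, and runs the identical iteration. The main obstacle, and the step requiring care, is exactly this transfer of the Gaussian factor through the maximum-principle/gradient-iteration chain while keeping the decay rate $\sigma_0|x-y|^2/(t-s)$ intact and simultaneously exhibiting the decisive gain of one full power $y_n^{-1}(t-s)^{1/2}$ that comes solely from evaluating the free Green function at the boundary point $x_n=0$ under the standing hypothesis $y_n>\sqrt{t-s}$.
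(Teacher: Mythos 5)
Your decomposition into two pieces, the weight discrepancy $(x_n^\mu y_n^{-\mu}-1)D^2_x\Gamma^{\cal D}$ and the unweighted difference $D^2_x(\Gamma^{\cal D}-\Gamma)$, is a genuinely different strategy from the paper's. The paper keeps the weight glued to the Green function, sets ${\mathbb G}_{0,0}=\frac{x_n^\mu}{y_n^\mu}\Gamma^{\cal D}-\Gamma$, localizes by a cutoff $\zeta(|x-x^0|/x^0_n)$ supported away from $\{x_n=0\}$, notes that $\zeta{\mathbb G}_{0,0}$ has zero Dirichlet and zero initial data, and represents it as a volume potential $\int\Gamma^{\cal D}(x^0,z;1,s)(F_1+F_2)\,dz\,ds$ with a source that is compactly supported near $x^0$ and already carries the Gaussian decay in $|z-y|$; the gain $y_n^{-1}$ is then extracted with the elementary inequality $x_n^\lambda y_n^{-\lambda}\le C_{\lambda,a}\exp(a|x_n-y_n|^2)$. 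Your treatment of the weight discrepancy is, modulo routine details, consistent with that last step and is arguably more transparent.

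The genuine gap is in your handling of $D^2_x(\Gamma^{\cal D}-\Gamma)$. You set $w=\Gamma-\Gamma^{\cal D}$, bound the boundary data $g(z',\tau)=\Gamma(z',0,y;\tau,s)$ pointwise, and then write that ``$\sup|w|$ over the cylinder is in turn dominated by the maximum principle via the boundary data,'' attributing to that sup the same Gaussian factor $\exp(-\sigma_0|x-y|^2/(t-s))$ as the boundary data enjoys. The maximum principle does not do this: since $w\ge0$, it gives $w(z,\tau)\le\sup_{\zeta',\,r\in(s,\tau)}g(\zeta',r)$, a quantity that does \emph{not} depend on $z$ and therefore carries no spatial decay at all. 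The trivial bound $w\le\Gamma$ gives the decay but not the gain $y_n^{-1}\sqrt{t-s}$, and an interpolation between the two produces an extra unwanted power of $y_n/\sqrt{t-s}\ge1$. To make your route rigorous one has to replace the maximum principle by a genuine boundary representation, e.g.
\begin{equation*}
w(x,t)=\int_s^t\!\int_{\mathbb R^{n-1}} a^{nn}(\tau)\,\Gamma\big(x,(\zeta',0);t,\tau\big)\,\partial_{\zeta_n}\Gamma^{\cal D}\big((\zeta',0),y;\tau,s\big)\,d\zeta'\,d\tau,
\end{equation*}
and then invoke the bound on $\partial_{y_n}\Gamma^{\cal D}$ from Lemma~\ref{LTT2}; only then do the factors $\exp(-\sigma x_n^2/(t-\tau))$ and $\exp(-\sigma y_n^2/(\tau-s))$ simultaneously supply the spatial decay and the gain. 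There is also a smaller slip: the interior cylinder around $(x,t)$ on which Lemma~\ref{L1a}(i) is applied must have radius of order $\min(x_n,\sqrt{t-s})=\sqrt{t-s}$, not of order $x_n$, since otherwise its temporal extent $\sim x_n^2$ overshoots the initial time $s$ when $x_n\gg\sqrt{t-s}$; this changes the prefactor from $x_n^{-2}$ to $(t-s)^{-1}$, which happens to be exactly what is needed, so only the bookkeeping in your text is off.
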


\begin{proof} It is sufficient to prove Lemma for $s=0$. We put
$$
{\mathbb G}_{\alpha,\beta}(x,y;t)=\frac {x_n^{\mu}}{y_n^{\mu}} \,
D_{x}^\alpha D_{y}^\beta\Gamma^{\cal D}(x,y;t,0)- D_{x}^\alpha
D_{y}^\beta\Gamma(x,y;t,0)
$$
Since the functions ${\mathbb G}_{\alpha,\beta}$ are positively
homogeneous with respect to variables $x$, $y$ and $\sqrt{t}$, it
is sufficient to prove Lemma for $t=1$ and correspondingly for
$x_n>1$ and $y_n>1$. First, let us prove the estimate
\begin{equation}\label{Nov30}|{\mathbb G}_{0,0}(x,y;1)|
\le Cy_n^{-1} \exp\left(-\widetilde\sigma|x-y|^2 \right)
\end{equation}
for $x_n>1/2$ and $y_n>1/2$. Here $\widetilde\sigma$ is a positive
constant depending on $\sigma$. Let $x^0$ be arbitrary point with
$x^0_n>1/2$. By $\zeta=\zeta(\rho)$ denote a smooth function such
that $\zeta(\rho)=1$ for $\rho\leq 1/4$ and $\zeta(\rho)=0$ for
$\rho\geq 1/2$.
 Applying the operator ${\cal L}_0$ to the function
 $$\phi(x,y,t)= \zeta(|x-x^0|/x^0_n){\mathbb G}_{0,0}(x,y;t),$$
 we obtain
 \begin{equation}\label{Nov1}
 {\cal L}_0\phi(x,y,t)=F_1(x,y,t)+F_2(x,y,t),
 \end{equation}
 where
$$
F_1=-2a^{kj}D_{x_j}\Big (\zeta\Big(\frac{|x-x^0|}{x^0_n}\Big)
 \frac{x_n^{\mu}}{y_n^{\mu}}\Big)D_{x_k}\Gamma^{\cal D}
 -a^{kj}D_{x_k}D_{x_j}
 \Big (\zeta\Big(\frac {|x-x^0|}{x^0_n}\Big)\frac {x_n^{\mu}}{y_n^{\mu}}\Big)\Gamma^{\cal D}
$$
 and
 $$
 F_2=2a^{kj}D_{x_j}\zeta\Big(\frac {|x-x^0|}{x^0_n}\Big)D_{x_k}
 \Gamma+a^{kj}D_{x_k}D_{x_j}\zeta\Big(\frac {|x-x^0|}{x^0_n}\Big)\Gamma.
 $$
 Solving (\ref{Nov1}), we arrive at
 \begin{equation}\label{Nov1g}
 {\mathbb G}_{0,0}(x^0,y;1)=\int\limits_0^1\int\limits_{\mathbb R^n_+}
 \Gamma^{\cal D}(x^0,z;1,s) \Big (F_1(z,y,s)+F_2(z,y,s) \Big )dzds.
 \end{equation}
 In what follows we'll write $x$ instead of $x^0$. Taking into account (\ref{May1})
 and $x_n>1/2>\sqrt{s}/2$, we estimate the first term of the integrand in (\ref{Nov1g}) by
 \begin{equation}\label{Nov5}
 C \frac{z_n^{\mu}y_n^{-\mu}x_n^{-1}}{(1-s)^{\frac n2}s^{\frac {n+1}2}}
 \exp\Big(-\frac{\sigma |x-z|^2}{1-s}-\frac{\sigma |z-y|^2}{s}\Big ).
 \end{equation}
 Similarly,  using (\ref{May0}) and (\ref{May1}) the second term can be estimated by
 \begin{equation}\label{Nov5a}
 C \frac{x_n^{-1}}{(1-s)^{\frac n2}s^{\frac {n+1}2}}\exp\Big(-\frac{\sigma |x-z|^2}{1-s}-\frac{\sigma |z-y|^2}{s}\Big ).
 \end{equation}

 Since the integration in (\ref{Nov1g}) is taken over ${|z-x|<x_n/2}$, we
 obtain $|{\mathbb G}_{0,0}(x,y;1)|$ is majorized by
 \begin{equation}\label{Nov5b}
  C\Big(\frac {x_n^{\mu-1}}{y_n^\mu}+x_n^{-1}\Big)
 \int\limits_{0}^1
 \int\limits_{\mathbb R^n_+}\exp\Big(-\frac{\sigma |x-z|^2}{1-s}-
 \frac{\sigma |z-y|^2}{s}\Big)\frac {dzds}{(1-s)^{\frac n2}s^{\frac {n+1}2}}.
 \end{equation}
 We observe that the exponent in the right-hand side does not exceed
 $$
 \exp\Big(-\frac{\sigma |x-y|^2}{2}-\frac{\sigma |x-z|^2}{2(1-s)}-\frac{\sigma |z-y|^2}{2s}\Big)
 $$
 and split the integral with respect to $s$ into two integrals, one from $0$ to $1/2$ and another from $1/2$ to $1$.
 Using the change of variables $u=(z-y)s^{-1/2}$ in the first integral and $v=(x-z)(1-s)^{-1/2}$ in the second one,
 we estimate the integral in (\ref{Nov5b}) by $C\exp\big (-\sigma|x-y|^2/2 \big)$, that
 gives the estimate
$$
|{\mathbb G}_{0,0}(x,y;1)|\le C\Big(\frac
{x_n^{\mu-1}}{y_n^\mu}+x_n^{-1}\Big)\exp\Big(-\frac{\sigma
|x-y|^2}{2}\Big ).
$$
Using that for any $\lambda\in\mathbb R$, $a>0$ and $x_n> 1/2$,
$y_n> 1/2$
\begin{equation}\label{Dec1m}
x_n^\lambda y_n^{-\lambda}\leq
C_{\lambda,a}\exp\Big(a|x_n-y_n|^2\Big ),
\end{equation}
 we arrive at (\ref{Nov30}).\medskip

Next step includes the following local estimate for solutions  to
the equation ${\cal L}_0 u=h$ in $Q_R(x_0,t_0)$:
\begin{equation}\label{Dec1}
\sup_{Q_{R/2}(x_0,t_0)}|D_x u(x,t)|\leq
C\big(R^{-1}\sup_{Q_R(x_0,t_0)}|u(x,t)|+R\sup_{Q_R(x_0,t_0)}|h(x,t)|\big).
\end{equation}
For $R=1$ it follows from the integral representation (\ref{TTN1})
and estimate (\ref{May0}) for the Green function $\Gamma$ after
rewriting equation for $u$ as equation in the whole space by
introducing an appropriate cut-off function. For arbitrary $R$ it
is proved by homogeneity arguments. Differentiating the equation
with respect $x$ and iteratively using (\ref{Dec1}), we arrive at
\begin{multline}\label{Dec1a}
\sup_{Q_{R/2^{|\alpha|}}(x_0,t_0)}|D_x^\alpha u(x,t)|\leq
C\Big(R^{-|\alpha|}\sup_{Q_R(x_0,t_0)}|u(x,t)\\
+\sum_{\beta<\alpha}R^{2-|\alpha|+|\beta|}\sup_{Q_R(x_0,t_0)}|D_x^\beta
h(x,t)|\Big).
\end{multline}
Applying (\ref{Dec1a}) with $|\alpha|\leq 2$, $t_0=1$ and $R=1/4$,
to equation ${\cal L}_0{\mathbb G}_{0,0}=h$, where
$$
h(x,y,t)=-2\mu\frac{x_n^{\mu-1}}{y_n^\mu}a^{jn}D_{x_j}
\Gamma^{\cal D}(x,y;t,0)-a^{nn}\mu(\mu-1)
\frac{x_n^{\mu-2}}{y_n^\mu}\Gamma^{\cal D}(x,y;t,0)
$$
(cf. (\ref{Nov1})). This gives
$$
|D_x^\alpha{\mathbb G}_{0,0}(x,y;1)|\leq C\sup_{ Q_{1/4}(x,1)}
\big(|{\mathbb G}_{0,0}(\cdot,y;\cdot)|+|h(\cdot,y,\cdot)|
+|D_xh(\cdot,y,\cdot)|\big)
$$
(the last term must be omitted if $|\alpha|=1$). Using
(\ref{Nov30}) for estimating the first term in the right-hand side
and (\ref{May1}) for estimating the other terms, together with
homogeneity arguments, we arrive at the estimate
\begin{equation}\label{Nov30a}
|D_x^\alpha{\mathbb G}_{0,0}(x,y;t)|  \le C\frac { 1} {t^{\frac
{n+|\alpha|-1}2}}\frac {x_n^{\mu-1}}{y_n^\mu} \exp
\left(-\widetilde\sigma_1\frac{|x-y|^2}{t} \right)
\end{equation}
for $x_n>3/4$ and $y_n>3/4$ with a certain positive
$\widetilde\sigma_1$ depending on $\sigma$. Expressing ${\mathbb
G}_{\alpha,0}$ in terms of derivatives of ${\mathbb G}_{0,0}$ and
using (\ref{Dec1m}), we arrive at (\ref{calG}).

Let us prove (\ref{D2y_calG}). Since the Green function is
symmetric with respect to  $x$ and $y$, the estimate
\begin{equation}\label{Nov30bc}
|{\mathbb G}_{0,\beta}(x,y;t)|\leq C\frac { x_n^{-1}} {t^{\frac
{n+1}2}} \exp \left(-\widetilde\sigma_1\frac{|x-y|^2}{t} \right)
\end{equation}
holds for $|\beta |=2$, $x_n>3\sqrt{t}/4$ and $y_n>3\sqrt{t}/4$.
Applying the local estimate (\ref{Dec1a}) with $|\alpha|\leq 2$ to
the equation ${\cal L}_0{\mathbb G}_{0,\beta}=h_\beta$, where
\begin{multline*}
h_\beta(x,y,t)\\
=-2\mu\frac{x_n^{\mu-1}}{y_n^\mu}a^{kn}D_{x_k}
D_y^\beta\Gamma^{\cal
D}(x,y;t,0)-a^{nn}\mu(\mu-1)\frac{x_n^{\mu-2}}{y_n^\mu}
D_y^\beta\Gamma^{\cal D}(x,y;t,0),
\end{multline*}
we obtain
$$
|D_{x}^\alpha{\mathbb G}_{0,\beta}(x,y;1)|\leq C
\sup_{Q_{1/4}(x,1)}\big( |{\mathbb
G}_{0,\beta}(\cdot,y;\cdot)|+|h_\beta(\cdot,y,\cdot)|+
|D_xh_\beta(\cdot,y,\cdot)|\big)
$$
(the last term must be omitted if $|\alpha|=1$). Using here
estimates (\ref{Nov30bc}) and (\ref{May1}) together with
homogeneity arguments and (\ref{Dec1m}), we arrive at
$$
|D_{x}^\alpha{\mathbb G}_{0,\beta}(x,y;1)|\leq C\frac { y_n^{-1}}
{t^{\frac {n+|\alpha|+1}2}} \exp
\left(-\widetilde\sigma_2\frac{|x-y|^2}{t} \right)
$$
for $x_n>1$ and $y_n>1$, which implies (\ref{D2y_calG}). Finally,
inequality (\ref{partials_calG}) follows from (\ref{D2y_calG}),
since the derivative with respect to $s$ can be expressed through
the second derivatives with respect to $y$. The proof is complete.
\end{proof}

For $\mu\in \mathbb R$ we define the weighted kernels
$${\cal G}_{ij}(x,y;t, s)=\frac {x_n^{\mu}}{y_n^{\mu}}\,
D_{x_i}D_{x_j}\Gamma^{\cal D}(x,y;t, s)-\chi_{\{x_n>\sqrt{t-
s}\}}\, D_{x_i}D_{x_j}\Gamma(x,y;t, s).
$$

\begin{lem}\label{LTT3}
The following estimates are valid:
\begin{equation}\label{calGij}
\left|{\cal G}_{ij}(x,y;t, s)\right| \le C\,\frac {{\cal
R}^{1-\varepsilon}_{x}{\cal R}_{y}} {(t- s)^{\frac {n+1}2}}\,\Big
(\frac {x_n^{\mu-1}}{y_n^{\mu}}+y_n^{-1}\Big )\, \exp
\left(-\frac{\sigma_1|x-y|^2}{t- s} \right)
\end{equation}
and
\begin{equation}\label{partials_calGij}
\left|\partial_s {\cal G}_{ij}(x,y;t, s)\right| \le C\,\frac {
{\cal R}^{1-\varepsilon}_{x}{\cal
R}^{-\varepsilon}_{y}}{(t-s)^{\frac {n+3}2}} \,\Big (\frac
{x_n^{\mu-1}}{y_n^{\mu}}+y_n^{-1}\Big)\, \exp
\left(-\frac{\sigma_1|x-y|^2}{t- s} \right)
\end{equation}
for $\mu\in \mathbb R$ and $s<t$.  Here $\varepsilon$ is an
arbitrary small positive number, the positive constant $\sigma_1$
depends only on $\nu$ while $C$ may depend on $\nu$, $\mu$ and
$\varepsilon$.
\end{lem}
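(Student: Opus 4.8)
The plan is to prove (\ref{calGij}) and (\ref{partials_calGij}) by a case analysis; by parabolic homogeneity of the kernels one may take $s=0$, and I write $\sqrt t$ for $\sqrt{t-s}$ below. The cases are distinguished by whether $x_n\le\sqrt t$ or $x_n>\sqrt t$ — so that the indicator $\chi_{\{x_n>\sqrt{t-s}\}}$ is locally constant in $s$, equal to $0$ or $1$ (and $\partial_s{\cal G}_{ij}$ in (\ref{partials_calGij}) is understood classically, that is, at $s\ne t-x_n^2$, the only time at which ${\cal G}_{ij}$ has a jump) — and, when $x_n>\sqrt t$, by whether $y_n>\sqrt t$ or $y_n\le\sqrt t$. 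I will use three facts: (a) ${\cal R}_x\sim x_n/\sqrt t$ when $x_n\le\sqrt t$, while ${\cal R}_x\sim1$ when $x_n>\sqrt t$ (and likewise for ${\cal R}_y$); (b) when $x_n>\sqrt t\ge y_n$ the right-hand side of (\ref{calGij}) is $\ge c\,t^{-(n+2)/2}\exp(-\sigma_1|x-y|^2/t)$ — bound ${\cal R}_x^{1-\varepsilon}$ below by a constant, ${\cal R}_y y_n^{-1}$ by $(y_n+\sqrt t)^{-1}\sim t^{-1/2}$, and $\frac{x_n^{\mu-1}}{y_n^\mu}+y_n^{-1}$ by $y_n^{-1}$; (c) $\partial_s\Gamma^{\cal D}=-a^{kl}(s)D_{y_k}D_{y_l}\Gamma^{\cal D}$ (the adjoint equation in $(y,s)$; likewise for $\Gamma$), as already used for (\ref{partials_calG}), so a $\partial_s$ turns into two $y$-derivatives — which is the source of the exponent $-\varepsilon$ on ${\cal R}_y$ in (\ref{partials_calGij}) via Theorem \ref{T2} with $|\beta|=2$.

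When $x_n\le\sqrt t$ the indicator vanishes, ${\cal G}_{ij}=\frac{x_n^\mu}{y_n^\mu}D_{x_i}D_{x_j}\Gamma^{\cal D}$, and (\ref{calGij}) follows from Theorem \ref{T2} with $\beta=0$: it yields the factor ${\cal R}_y$ required by the target and a factor ${\cal R}_x^{e}$ with $e\in\{1,0,-\varepsilon'\}$ according as $i,j$ contain the index $n$ zero, one, or two times; writing $\frac{x_n^\mu}{y_n^\mu}=\frac{x_n^{\mu-1}}{y_n^\mu}\cdot x_n$ and $x_n\sim\sqrt t\,{\cal R}_x$ turns $t^{-(n+2)/2}{\cal R}_x^{e}$ into $t^{-(n+1)/2}{\cal R}_x^{1+e}$, and ${\cal R}_x^{1+e}\le{\cal R}_x^{1-\varepsilon}$ because ${\cal R}_x\le1$ and $e\ge-\varepsilon$ (one takes the $\varepsilon$ of Theorem \ref{T2} no larger than the one in (\ref{calGij})). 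The $\partial_s$ version is identical after using fact (c), with $|\beta|=2$. When $x_n>\sqrt t$ and $y_n>\sqrt t$ one has ${\cal R}_x\sim{\cal R}_y\sim1$, so the right-hand side of (\ref{calGij}) is $\ge c\,t^{-(n+1)/2}y_n^{-1}\exp(\cdots)$; this is exactly the bound furnished by the preceding lemma's estimate (\ref{calG}) — and (\ref{partials_calG}) gives the analogous bound for $\partial_s{\cal G}_{ij}$ — so, the cancellation between $\Gamma^{\cal D}$ and $\Gamma$ having already been performed there, nothing further is needed in this case.

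The only genuinely new point is the case $x_n>\sqrt t\ge y_n$, where the preceding lemma is unavailable; as it turns out, no cancellation is needed and the two terms are bounded separately. The term $\chi\,D_{x_i}D_{x_j}\Gamma=D_{x_i}D_{x_j}\Gamma$ is bounded by (\ref{May0}) by $C\,t^{-(n+2)/2}\exp(\cdots)$, hence by the right-hand side of (\ref{calGij}) thanks to fact (b). For $\frac{x_n^\mu}{y_n^\mu}D_{x_i}D_{x_j}\Gamma^{\cal D}$, Theorem \ref{T2} with $\beta=0$ and ${\cal R}_x\sim1$ gives $C\frac{x_n^\mu}{y_n^\mu}\cdot\frac{y_n/\sqrt t}{t^{(n+2)/2}}\exp(-\sigma|x-y|^2/t)=C\,(x_n/y_n)^{\mu-1}\cdot\frac{x_n/\sqrt t}{t^{(n+2)/2}}\exp(\cdots)$; the excess factor $x_n/\sqrt t\ge1$ is harmless when $x_n\le2\sqrt t$, while for $x_n>2\sqrt t$ one has $|x-y|\ge x_n-y_n\ge x_n/2$, so $(x_n/\sqrt t)\exp(-\frac12\sigma|x-y|^2/t)$ is bounded and the Gaussian survives with rate $\sigma_1:=\sigma/2$. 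This gives $C\,(x_n/y_n)^{\mu-1}t^{-(n+2)/2}\exp(-\sigma_1|x-y|^2/t)$, again controlled by the right-hand side of (\ref{calGij}) via fact (b); the $\partial_s$ version is the same, with $|\beta|=2$ and (\ref{May0'}) in place of (\ref{May0}). Assembling the three cases and taking $\sigma_1$ to be the smallest rate obtained finishes the proof. The only real difficulty is bookkeeping — keeping the various small exponents mutually consistent, and checking in this last case that the Gaussian decay absorbs the factor $x_n/\sqrt{t-s}$, which it does precisely because the smallness of $y_n$ forces $|x-y|$ to exceed $x_n/2$ once $x_n>2\sqrt{t-s}$.
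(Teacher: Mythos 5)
Your proof is correct and follows essentially the same route as the paper's: the same three-case split according to whether $x_n$ and $y_n$ are above or below $\sqrt{t-s}$, the same appeal to the preceding lemma when both are large, to Theorem \ref{T2} when $x_n$ is small (so the truncated kernel vanishes), and to a separate bound of each summand when $x_n>\sqrt{t-s}\ge y_n$, with the factor $x_n/\sqrt{t-s}$ absorbed into the Gaussian; and the same reduction of the $\partial_s$ estimate to two $y$-derivatives via the adjoint equation. The only difference is cosmetic: the paper absorbs $x_n/\sqrt{t-s}$ through the inequality (\ref{Dec1m}) while you split on $x_n\lessgtr 2\sqrt{t-s}$, and the paper's algebra in the third case runs through the inequality $\frac{x_n^\mu\mathcal R_y}{y_n^\mu}+1\le x_n\mathcal R_y\bigl(\frac{x_n^{\mu-1}}{y_n^\mu}+2y_n^{-1}\bigr)$ rather than your comparison argument. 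One small slip worth fixing: your ``fact (b)'' asserts only the lower bound $\ge c\,t^{-(n+2)/2}\exp(\cdots)$ (obtained by dropping $\frac{x_n^{\mu-1}}{y_n^\mu}$), yet you invoke it to control the term $C\,(x_n/y_n)^{\mu-1}t^{-(n+2)/2}\exp(\cdots)$, which is not $\le C t^{-(n+2)/2}\exp(\cdots)$ when $\mu>1$; you should keep the other term to get the companion lower bound $\text{RHS}\ge c\,(x_n/y_n)^{\mu-1}t^{-(n+2)/2}\exp(\cdots)$, which is what that step actually needs. This is a phrasing issue, not a gap in the idea.
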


\begin{proof} Let $x_n>\sqrt{t-s}$ and $y_n>\sqrt{t-s}$. Then
${\cal R}_{x}\asymp1$ and ${\cal R}_{y}\asymp1$, where ${\cal
R}_{x}\asymp1$ means that ${\cal R}_{x}$ is estimated from below
and from above by positive constants independent of $x$, $y$, $t$
and $s$. Therefore, (\ref{calGij}) and (\ref{partials_calGij})
follow from (\ref{calG}) and (\ref{partials_calG}) respectively.

Now let $x_n<\sqrt{t-s}$ and $y_n>0$. Then
$$
{\cal G}_{ij}(x,y;t, s)=\frac {x_n^{\mu}}{y_n^{\mu}}
D_{x_i}D_{x_j}\Gamma^{\cal D}(x,y;t, s)
$$
and (\ref{May3}) implies
$$
|{\cal G}_{ij}(x,y;t, s)|\leq C\frac {{\cal
R}^{-\varepsilon}_{x}{\cal R}_{y}} {(t- s)^{\frac {n+2}2}}\frac
{x_n^{\mu}}{y_n^{\mu}} \exp \left(-\frac{\sigma|x-y|^2}{t- s}
\right).
$$
Since $x_n/\sqrt{t-s}\leq C{\cal R}_x$ in this case, the last
inequality implies (\ref{calGij}). Using the same arguments, we
estimate
$$
\frac {x_n^{\mu}}{y_n^{\mu}} D_{x_i}D_{x_j}D^2_{y}\Gamma^{\cal
D}(x,y;t, s)
$$
by the right-hand side in (\ref{partials_calGij}). Since the
derivative with respect to $s$ can be expressed through the second
derivatives with respect to $y$, we obtain
(\ref{partials_calGij}).

Finally consider the case $x_n>\sqrt{t-s}$ and $y_n<\sqrt{t-s}$.
Using estimates (\ref{May0}) and (\ref{May3}), we have
$$
|{\cal G}_{ij}(x,y;t, s)|\leq C\frac {1} {(t- s)^{\frac
{n+2}2}}\Big (\frac {x_n^{\mu}{\cal R}_{y}}{y_n^{\mu}}+1\Big )
\exp \left(-\frac{\sigma|x-y|^2}{t- s} \right).
$$
We observe that ${\cal R}_{x}\asymp1$ and
$$
\frac {x_n^{\mu}{\cal R}_{y}}{y_n^{\mu}}+1\leq x_n{\cal
R}_{y}\,\Big(\frac {x_n^{\mu-1}}{y_n^{\mu}}+2y_n^{-1}\Big ).
$$
Since (\ref{Dec1m}) implies
$$
\frac{x_n}{\sqrt{t-s}}\leq C_a\exp\Big(a
\frac{|x_n-\sqrt{t-s}|^2}{t-s}\Big )\leq C_a\exp\Big(a
\frac{|x_n-y_n|^2}{t-s}\Big )
$$
for every positive $a$, we arrive at (\ref{calGij}) with a
$\sigma_1$ less that $\sigma$.

Similar arguments estimate the function
$$
\frac {x_n^{\mu}}{y_n^{\mu}}\, D_{x_i}D_{x_j}D^2_{y}\Gamma^{\cal
D}(x,y;t,s)- D_{x_i}D_{x_j}D^2_{y}\Gamma(x,y;t, s).
$$
by the right-hand side in (\ref{partials_calGij}). Since the
derivative with respect to $s$ can be expressed through the second
derivatives with respect to $y$, we obtain
(\ref{partials_calGij}). The proof is completed.
\end{proof}

\begin{lem}\label{weak_2}
Let a function $h$ be supported in the layer $|s-s^0|\le\delta$
and satisfy $\int h(y,s)\ ds\equiv 0$. Also let $p\in (1,\infty)$
and $\mu$ be subject to {\rm (\ref{Mart2a})}. Then the integral
operator ${\cal G}_{ij}$ satisfies
$$\int\limits_{|t- s^0|>2\delta}\Vert ({\cal G}_{ij}h)(\cdot, t)\Vert_p\ dt\le
C\,\Vert h\Vert_{p,1},$$

\noindent where $C$ does not depend on $\delta$ and $ s^0$.
\end{lem}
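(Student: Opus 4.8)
The plan is to follow the scheme of Step~3 in the proof of Theorem~\ref{trunc}, with $\widehat{\mathfrak G}_{ij}$ replaced by ${\cal G}_{ij}$ and the pointwise bounds (\ref{May0}), (\ref{May0'}) replaced by those of Lemma~\ref{LTT3}. First I would use $\int h(y,s)\,ds\equiv 0$ to write
$$({\cal G}_{ij}h)(x,t)=\int\limits_{-\infty}^{t}\int\limits_{\mathbb R^n_+}\bigl({\cal G}_{ij}(x,y;t,s)-{\cal G}_{ij}(x,y;t,s^0)\bigr)h(y,s)\,dy\,ds.$$
Since $\Gamma^{\cal D}$ vanishes for $t\le s$ and $\operatorname{supp}h\subset\{|s-s^0|\le\delta\}$, only $t>s^0+2\delta$ contributes, and then $t-s\asymp t-s^0$. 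The weighted part $\frac{x_n^{\mu}}{y_n^{\mu}}D_{x_i}D_{x_j}\Gamma^{\cal D}$ is smooth in $s$, while $\chi_{\{x_n>\sqrt{t-s}\}}D_{x_i}D_{x_j}\Gamma$ has a single jump, at $s=t-x_n^2$, of size $D_{x_i}D_{x_j}\Gamma(x,y;t,t-x_n^2)$; hence
$$\bigl|{\cal G}_{ij}(x,y;t,s)-{\cal G}_{ij}(x,y;t,s^0)\bigr|\le\int\limits_{s^0\wedge s}^{s^0\vee s}\bigl|\partial_\tau{\cal G}_{ij}(x,y;t,\tau)\bigr|\,d\tau+\bigl|D_{x_i}D_{x_j}\Gamma(x,y;t,t-x_n^2)\bigr|\,\chi_{\{x_n^2\in[t-s^0-\delta,\,t-s^0+\delta]\}},$$
which gives ${\cal G}_{ij}h={\cal I}_1h+{\cal I}_2h$ exactly as in Step~3.

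The jump term ${\cal I}_2$ I would treat verbatim as in Step~3: on its support $x_n^2\asymp t-s^0\asymp t-s$, so by (\ref{May0}) its kernel is $\le C(t-s)^{-\frac{n+2}2}\exp(-\sigma|x-y|^2/(t-s))$ restricted to $x_n\in[\sqrt{t-s^0-\delta},\sqrt{t-s^0+\delta}]$, and the H\"older--Minkowski argument of Step~3 (with the change of variable $x=z\sqrt{t-s}$, which shrinks the $x_n$-window to a $z_n$-window of length $\asymp\delta/(t-s)$) yields $\int_{|t-s^0|>2\delta}\Vert({\cal I}_2h)(\cdot,t)\Vert_p\,dt\le C\Vert h\Vert_{p,1}$.

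For ${\cal I}_1$ the naive bound via (\ref{partials_calGij}) alone is not enough: it gives the kernel $C\delta\,{\cal R}_x^{1-\varepsilon}{\cal R}_y^{-\varepsilon}(t-s)^{-\frac{n+3}2}(\frac{x_n^{\mu-1}}{y_n^\mu}+y_n^{-1})\exp(\dots)$, whose rescaled profile (after $x=z\sqrt{t-s}$, $y=w\sqrt{t-s}$) contains a factor ${\cal R}_w^{-\varepsilon}w_n^{-1}\sim w_n^{-1-\varepsilon}$, which is not even locally integrable near $\{w_n=0\}$. To fix this I would interpolate the two available estimates. Fixing $\theta\in(0,1)$, I bound
$$\int\limits_{s^0\wedge s}^{s^0\vee s}\bigl|\partial_\tau{\cal G}_{ij}\bigr|\,d\tau\le\Bigl(\int\limits_{s^0\wedge s}^{s^0\vee s}\bigl|\partial_\tau{\cal G}_{ij}\bigr|\,d\tau\Bigr)^{1-\theta}\bigl(|{\cal G}_{ij}(x,y;t,s)|+|{\cal G}_{ij}(x,y;t,s^0)|\bigr)^{\theta},$$
estimating the first factor by (\ref{partials_calGij}) (which retains the gain $\delta^{1-\theta}$) and the second by (\ref{calGij}); the point is that (\ref{calGij}) carries the favourable power ${\cal R}_y^{1}$, hence contributes the weight $y_n^{1-\mu}$ near $\{y_n=0\}$ instead of $y_n^{-\mu}$. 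Using $t-\tau\asymp t-s$ and rescaling, this yields ${\cal I}_1(x,y;t,s)\le C\delta^{1-\theta}(t-s)^{-\frac{n+4-2\theta}2}\,\Psi(z,w)$ with a dimensionless profile $\Psi$ whose near-boundary behaviour is $z_n^{\mu-\varepsilon}$ in $z_n$ and $w_n^{\,\theta-\mu-\varepsilon(1-\theta)}$ (resp.\ $w_n^{-(1+\varepsilon)(1-\theta)}$) in $w_n$.

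It then remains to check that the integral operator $T_\Psi$ on $\mathbb R^n_+$ with kernel $\Psi$ is bounded on $L^p$; this I would do by splitting $\mathbb R^n_+$ according to the signs of $z_n-1$ and $w_n-1$, on each piece of which all ${\cal R}$-factors are explicit powers, followed by a Schur-type estimate with weights $w_n^{c}$. The $L^p$-integrability at $z_n\to0$ requires $p(\mu-\varepsilon)>-1$, i.e.\ $\mu>-1/p$ for $\varepsilon$ small; the $L^{p'}$-integrability at $w_n\to0$ requires $p'\bigl(\theta-\mu-\varepsilon(1-\theta)\bigr)>-1$ together with $p'(1+\varepsilon)(1-\theta)<1$, which for $\theta$ close enough to $1$ and $\varepsilon$ small amounts to $\mu<1+1/p'=2-1/p$. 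Thus for any $\mu$ satisfying (\ref{Mart2a}) one can choose such $\theta,\varepsilon$, obtaining $\Vert T_\Psi\Vert_{L^p\to L^p}<\infty$. Finally, by rescaling the $L^p_y\to L^p_x$ operator norm of ${\cal I}_1(\cdot,\cdot;t,s)$ equals $C\delta^{1-\theta}(t-s)^{-(2-\theta)}\Vert T_\Psi\Vert$, so by Minkowski
$$\int\limits_{|t-s^0|>2\delta}\Vert({\cal I}_1h)(\cdot,t)\Vert_p\,dt\le C\delta^{1-\theta}\Vert T_\Psi\Vert\int\limits_{|s-s^0|<\delta}\Vert h(\cdot,s)\Vert_p\Bigl(\int\limits_{s^0+2\delta}^{\infty}\frac{dt}{(t-s)^{2-\theta}}\Bigr)ds\le C\Vert h\Vert_{p,1},$$
since $\int_{s^0+2\delta}^{\infty}(t-s)^{-(2-\theta)}\,dt\asymp\delta^{-(1-\theta)}$. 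The delicate step — the one I expect to be the main obstacle — is precisely this weight bookkeeping for ${\cal I}_1$: producing the interpolated profile $\Psi$ with exactly the right powers of $\delta$, $t-s$, $z_n$ and $w_n$ so that the $\delta$-powers cancel in the final integration and the endpoint integrability conditions collapse to (\ref{Mart2a}).
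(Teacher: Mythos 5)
Your proposal follows essentially the same route as the paper's proof: split $|{\cal G}_{ij}(x,y;t,s)-{\cal G}_{ij}(x,y;t,s^0)|$ into the time-derivative integral plus the jump across $s=t-x_n^2$, handle the jump term verbatim as in Step~3 of Theorem~\ref{trunc}, and interpolate the estimates (\ref{partials_calGij}) and (\ref{calGij}) to trade a fractional power of $\delta/(t-s)$ against the favourable ${\cal R}_y$-weight from (\ref{calGij}). Where the paper feeds the interpolated pointwise bound into the prepackaged Lemma~\ref{L_p_1} (with $\varkappa=\frac{\varepsilon}{1+\varepsilon}$, which is exactly your $1-\theta$ after identifying $\theta=\frac{1}{1+\varepsilon}$), you instead propose to run the Schur-type computation by hand after rescaling; both arrive at the same endpoint constraints $-1/p<\mu<2-1/p$.

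One step, however, is not correct as written. The displayed inequality
\begin{equation*}
\int\limits_{s^0\wedge s}^{s^0\vee s}\bigl|\partial_\tau{\cal G}_{ij}\bigr|\,d\tau
\le\Bigl(\int\limits_{s^0\wedge s}^{s^0\vee s}\bigl|\partial_\tau{\cal G}_{ij}\bigr|\,d\tau\Bigr)^{1-\theta}
\bigl(|{\cal G}_{ij}(x,y;t,s)|+|{\cal G}_{ij}(x,y;t,s^0)|\bigr)^{\theta}
\end{equation*}
is equivalent to $\int|\partial_\tau{\cal G}_{ij}|\,d\tau\le|{\cal G}_{ij}(s)|+|{\cal G}_{ij}(s^0)|$, which fails in general (e.g.\ if $\partial_\tau{\cal G}_{ij}$ changes sign on the interval). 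The object one should interpolate is the difference itself: with $M_1$ the right-hand side of (\ref{partials_calGij}) times $\delta$, $M_2$ the right-hand side of (\ref{calGij}) (noting $t-s\asymp t-s^0$), and $J$ the jump term, one has both $|{\cal G}_{ij}(s)-{\cal G}_{ij}(s^0)|\le M_1+J$ and $|{\cal G}_{ij}(s)-{\cal G}_{ij}(s^0)|\le 2M_2$, hence $|{\cal G}_{ij}(s)-{\cal G}_{ij}(s^0)|\le\min(M_1,2M_2)+J\le M_1^{1-\theta}(2M_2)^{\theta}+J$, which is exactly the paper's decomposition into ${\cal J}_1+{\cal J}_2$. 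After this repair, your argument coincides with the one in the paper.
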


\begin{proof}
By $\int h(y, s)\ ds\equiv 0$, we have
\begin{equation}\label{difference}
({\cal G}_{ij}h)(x,t)=\int\limits_{0}^{t}\int\limits_{\mathbb
R^n_+} \Bigl({\cal G}_{ij}(x,y;t, s)-{\cal G}_{ij}(x,y;t,
s^0)\Bigr)\, h(y, s)\ dy\ ds.
\end{equation}
We choose $\varepsilon>0$ such that
\begin{equation}\label{mueps}
-\frac 1p+\varepsilon<\mu<2-\frac 1p-\varepsilon.
\end{equation}
For $| s- s^0|<\delta$ and $t- s^0>2\delta$, estimates
(\ref{partials_calGij}) and (\ref{May0}) with $|\alpha|=2$,
$|\beta|=0$ imply
\begin{eqnarray*}
&&\left|{\cal G}_{ij}(x,y;t, s)-{\cal G}_{ij}(x,y;t, s^0)\right|
\le\int\limits_{s^0}^s|\partial_\tau {\cal G}_{ij}(x,y;t,
\tau)|\,d\tau\\
&&+
|D_{x_i}D_{x_j}\Gamma(x,y;t,t-x_n^2)|\,\chi_{\{x_n^2\in(t-s,t-s^0)\}}
\le C\,\frac {{\cal R}^{1-\varepsilon}_{x} {\cal
R}^{-\varepsilon}_{y}} {(t- s)^{\frac {n+1}2}} \,\Big (\frac
{x_n^{\mu-1}}{y_n^{\mu}}+y_n^{-1}\Big)\\
&&\times \frac {\delta} {t- s} \exp \left(-\frac
{\sigma|x-y|^2}{t- s}\right)+C\,\frac
{\chi_{\{x_n^2\in(t-s,t-s^0)\}}}{(t-s)^{\frac {n+2}2}}
\,\exp\left(-\frac {\sigma|x-y|^2}{t-s}\right).
\end{eqnarray*}
On the other hand, estimate (\ref{calGij}) gives
\begin{eqnarray*}
&&\left|{\cal G}_{ij}(x,y;t, s)-{\cal G}_{ij}(x,y;t, s^0)\right|\\
&&\le C\,\frac {{\cal R}^{1-\varepsilon}_{x}{\cal R}_{y}}{(t-
s)^{\frac {n+1}2}}
 \,\Big (\frac {x_n^{\mu-1}}{y_n^{\mu}}+y_n^{-1}\Big) \, \exp
\left(-\frac{\sigma|x-y|^2}{t- s} \right).
\end{eqnarray*}
Combination of these estimates gives
\begin{eqnarray*}
&&\left|{\cal G}_{ij}(x,y;t, s)-{\cal G}_{ij}(x,y;t, s^0)\right| \\
&&\le C\,\frac {{\cal R}^{1-\varepsilon}_{x} {\cal
R}^{1-\varepsilon}_{y}} {(t- s)^{\frac {n+1}2}}\,
 \,\Big (\frac {x_n^{\mu-1}}{y_n^{\mu}}+y_n^{-1}\Big)
\,\left(\frac {\delta}{t- s}\right)^{\frac
{\varepsilon}{1+\varepsilon}}
\!\!\!\! \, \exp \left(-\frac {\sigma|x-y|^2}{t- s} \right)\\
&&+C\,\frac {\chi_{\{x_n^2\in(t-s,t-s^0)\}}}{(t-s)^{\frac {n+2}2}}
\,\exp\left(-\frac {\sigma|x-y|^2}{t-s}\right) =: {\cal
J}_1(x,y,t, s)+{\cal J}_2(x,y,t, s).
\end{eqnarray*}
Applying this inequality for estimating the right-hand side in
(\ref{difference}), we obtain
$$\int\limits_{|t- s^0|>2\delta}\Vert ({\cal G}_{ij}h)(\cdot, t)\Vert_p\ dt
\le \int\limits_{|t- s^0|>2\delta}\Vert ({\cal J}_1h)(\cdot,
t)\Vert_p\ dt +\int\limits_{|t- s^0|>2\delta}\Vert ({\cal
J}_2h)(\cdot, t)\Vert_p\ dt.$$ The second term is estimated by
$C\Vert h\Vert_{p,1}$ in the proof of Theorem \ref{trunc}, Step~3.
Further, the first term can be treated by Lemma \ref{L_p_1} with
$m=1$, $r=1$, $\lambda_1=-\varepsilon$, $\lambda_2=1-\varepsilon$,
$\varkappa=\frac {\varepsilon}{1+\varepsilon}$. The inequality
(\ref{mueps}) becomes (\ref{mu_m}), and $y_n^{-1}$ corresponds to
a particular case $\mu=1$. Thus, this term is also estimated by
$C\Vert h\Vert_{p,1}$.
\end{proof}

Now we are in position to prove one of the main results of this paper.
\begin{sats}\label{halfspace}
Let $p,\ q\in\ (1,\infty)$ and $\mu$ be subject to {\rm
(\ref{Mart2a})}. Then a solution of {\rm (\ref{Jan1})} in $\mathbb
R^n_+\times \mathbb R$ with  zero Dirichlet condition satisfies
\begin{equation}\label{est_halfspace}
\gathered |\!|\!|x_n^\mu\partial_t u|\!|\!|_{p,q}+|\!|\!|x_n^\mu
D^2u|\!|\!|_{p,q}\le
C\ |\!|\!|x_n^\mu f|\!|\!|_{p,q},\\
\|x_n^\mu\partial_t u\|_{p,q}+\|x_n^\mu D^2u\|_{p,q}\le C\
\|x_n^\mu f\|_{p,q},
\endgathered
\end{equation}
where $C$ depends only on $\nu$, $\mu$, $p$ and $q$.
\end{sats}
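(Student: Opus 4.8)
The plan is to reduce everything to the operator with weighted kernel $x_n^\mu y_n^{-\mu}D_{x_i}D_{x_j}\Gamma^{\cal D}$ and to exploit the decomposition built in the previous sections. Writing the solution via the half-space Green function, $u(x,t)=\int\!\int \Gamma^{\cal D}(x,y;t,s)f(y,s)\,dy\,ds$, we have $D_{x_i}D_{x_j}u(x,t)=\int\!\int D_{x_i}D_{x_j}\Gamma^{\cal D}(x,y;t,s)f(y,s)\,dy\,ds$, so that multiplying by $x_n^\mu$ and substituting $g(y,s)=y_n^\mu f(y,s)$ reduces both estimates in \eqref{est_halfspace} for the second-order term to the boundedness of the operator with kernel $x_n^\mu y_n^{-\mu}D_{x_i}D_{x_j}\Gamma^{\cal D}(x,y;t,s)$ in $L_{p,q}(\mathbb R^n_+\times\mathbb R)$ and in $\widetilde L_{p,q}(\mathbb R^n_+\times\mathbb R)$. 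For this we use the splitting announced in the introduction:
\[
\frac{x_n^\mu}{y_n^\mu}D_{x_i}D_{x_j}\Gamma^{\cal D}(x,y;t,s)
=\chi_{\{x_n>\sqrt{t-s}\}}D_{x_i}D_{x_j}\Gamma(x,y;t,s)+{\cal G}_{ij}(x,y;t,s).
\]
The first piece is $\widehat{\mathfrak G}_{ij}$ when $j\ne n$, which is bounded in both scales by Theorem \ref{trunc}; for $j=n$ (equivalently $i=n$) one notes that $D_{x_n}^2 u$ can be recovered from the equation \eqref{Jan1} once $\partial_t u$ and all $D_iD_j u$ with $j\ne n$ are controlled, so it suffices to handle the time derivative separately. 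The remaining piece ${\cal G}_{ij}$ is the complement kernel, and the whole point of Lemmas \ref{LTT3} and \ref{weak_2} is that it is a ``weakly singular'' perturbation.

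Concretely, to bound ${\cal G}_{ij}$ in $L_{p,q}$ I would invoke \cite[Theorem 3.8]{BIN} exactly as in the proof of Theorem \ref{trunc}: one needs $L_p$-boundedness in $x$ (for fixed structure in $t$) together with the cancellation estimate of Lemma \ref{weak_2}, which is precisely the ``second condition'' of that theorem with respect to the time variable. The $L_p$-in-$x$ boundedness, in turn, follows from the pointwise bound \eqref{calGij}: the factors ${\cal R}_x^{1-\varepsilon}$, ${\cal R}_y$, and $(x_n^{\mu-1}/y_n^\mu+y_n^{-1})$ combine with the Gaussian and the range $-1/p<\mu<2-1/p$ (i.e. \eqref{mueps}) to give a kernel whose associated operator is bounded on $L_p(\mathbb R^n_+)$ uniformly in $t,s$ — this is a Schur-type / weighted-Hardy computation, presumably isolated in a lemma like ``Lemma \ref{L_p_1}'' referenced in the proof of Lemma \ref{weak_2}. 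Similarly, for the $\widetilde L_{p,q}$ bound one re-runs the argument of Lemma \ref{weak_1} / Step 4 of Theorem \ref{trunc}: the estimate \eqref{calGij} with the $y$-gradient version (using the full strength of \eqref{D2y_calG}-type bounds from Lemma \ref{LTT3}) yields the cancellation inequality $\int_{|x-y^0|>2\delta}\|({\cal G}_{ij}h)(x,\cdot)\|_q\,dx\le C|\!|\!| h|\!|\!|_{1,q}$ for mean-zero $h$, and \cite[Theorem 3.8]{BIN} again interpolates this with the diagonal $L_q$-boundedness (or $L_p$-boundedness in $x$) to give boundedness in $\widetilde L_{p,q}$ for $p<q$, with $p>q$ following by duality.

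Having bounded the second-derivative term in both norms, the time-derivative term is controlled immediately from the equation: $\partial_t u=a^{ij}(t)D_iD_j u+f$, and $\|a^{ij}\|_{L^\infty}\le\nu^{-1}$, so $\|x_n^\mu\partial_t u\|_{p,q}\le C(\|x_n^\mu D^2u\|_{p,q}+\|x_n^\mu f\|_{p,q})$ and likewise for $|\!|\!|\cdot|\!|\!|_{p,q}$. (This also resolves the $j=n$ case of the second-derivative estimate, since once $\partial_t u$ and the mixed/tangential $D_iD_j u$ are bounded, $a^{nn}D_n^2 u=\partial_t u-f-\sum_{(i,j)\ne(n,n)}a^{ij}D_iD_j u$ together with $a^{nn}\ge\nu$ closes the loop.) The main obstacle, as usual in this circle of ideas, is the verification that the complement kernel ${\cal G}_{ij}$ genuinely defines a bounded operator in the scale of mixed-norm spaces: the pointwise bounds are not integrable against the Gaussian in a naive way — the borderline weights $x_n^{\mu-1}$ and $y_n^{-\mu}$ are exactly what make the $L_p$-in-$x$ estimate hold only under \eqref{Mart2a} — so the delicate step is the weighted-norm Schur estimate encapsulating $L_p$-boundedness in $x$ uniformly in the parabolic scaling, which I expect to be the technical heart of the argument (and which the authors evidently package into an auxiliary lemma, referenced as Lemma \ref{L_p_1}).
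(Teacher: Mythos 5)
Your overall structure — split $x_n^\mu y_n^{-\mu}D_iD_j\Gamma^{\cal D}$ into the truncated singular kernel $\widehat{\mathfrak G}_{ij}$ plus the complement ${\cal G}_{ij}$, dispose of the former by Theorem~\ref{trunc}, and for ${\cal G}_{ij}$ in $L_{p,q}$ combine an $L_p$-bound with Lemma~\ref{weak_2} via \cite[Theorem~3.8]{BIN} — matches the paper for $j\ne n$. But there are two genuine problems.

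First, and most seriously, your treatment of the $j=n$ (and hence $\partial_t u$) case is circular. You estimate $\partial_t u$ from the equation using \emph{all} of $D^2u$, including $D_n^2u$; and you estimate $D_n^2u$ from the equation using $\partial_t u$. Neither is available before the other. The paper breaks this cycle in a specific way: having already controlled $D_iD_ju$ for $j\ne n$, it rewrites \eqref{Jan1} as
$$\partial_tu-a^{nn}(t)\Delta u=\widetilde a^{ij}(t)D_iD_ju+f,\qquad \widetilde a^{ij}=a^{ij}-a^{nn}\delta^{ij},$$
where the right-hand side contains no $D_n^2u$ (since $\widetilde a^{nn}=0$) and is therefore already bounded. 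After the time change $\tau=\int_0^t a^{nn}$ this becomes the heat equation $\partial_\tau u-\Delta u=\widetilde f$, for which the weighted coercive estimate in the half-space is a known result (\cite[Theorem~7.6]{Na}). That gives $\partial_t u$ and $D_n^2u$ simultaneously and non-circularly. Without some such independent input — the decomposition through $\widehat{\mathfrak G}_{ij}$ genuinely requires $j\ne n$ — your argument does not close.

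Second, your route to the $\widetilde L_{p,q}$-bound for ${\cal G}_{ij}$ (a Lemma~\ref{weak_1}-type cancellation estimate $\int_{|x-y^0|>2\delta}\|({\cal G}_{ij}h)(x,\cdot)\|_q\,dx\le C\,|\!|\!| h|\!|\!|_{1,q}$ for mean-zero $h$, then \cite[Theorem~3.8]{BIN}) is not what the paper does and does not obviously work. That cancellation argument needs a control on $|{\cal G}_{ij}(x,y;t,s)-{\cal G}_{ij}(x,y^0;t,s)|$ via a $y$-gradient, but ${\cal G}_{ij}$ carries the weight $y_n^{-\mu}$, which varies by an $O(1)$ factor across the support ball $|y-y^0|<\delta$ whenever $y^0_n\lesssim\delta$; the estimate \eqref{D2y_calG} you invoke is also proved only for $x_n,y_n>\sqrt{t-s}$. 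The paper avoids this entirely: \eqref{calGij} shows ${\cal G}_{ij}$ satisfies the hypotheses of Lemmas~\ref{L_p} and \ref{L_p_infty} (with $m=1$, $r=1$, $\lambda_1=-\varepsilon$, $\lambda_2=1$), giving boundedness in $L_p$ and in $\widetilde L_{p,\infty}$ directly via Schur-type computations with no cancellation, and then the generalized Riesz--Thorin theorem interpolates to $\widetilde L_{p,q}$ for $q\ge p$, with duality for $q<p$. You should replace the cancellation step by this endpoint-plus-interpolation argument.
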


\begin{proof}
The estimate of the last terms in the left-hand side of
(\ref{est_halfspace}) is equivalent to the boundedness of integral
operators with kernels
$${\mathfrak G}_{ij}^{\cal D}(x,y;t,s)=\frac
{x_n^\mu} {y_n^\mu}D_{x_i}D_{x_j}\Gamma^{\cal D}(x,y;t,s)$$ in
$\widetilde L_{p,q}(\mathbb R^n_+\times \mathbb R_+)$ and
$L_{p,q}(\mathbb R^n_+\times \mathbb R)$, respectively.

First, we consider the case $j\ne n$. The kernel ${\mathfrak
G}_{ij}^{\cal D}(x,y;t,s)$ can be written as
$${\mathfrak G}_{ij}^{\cal D}(x,y;t,s)={\cal G}_{ij}(x,y;t,s)+
\chi_{\{x_n>\sqrt{t- s}\}}\, D_{x_i}D_{x_j}\Gamma(x,y;t, s).$$ By
Theorem \ref{trunc} the operator corresponding to the second term
is bounded both in $\widetilde L_{p,q}(\mathbb R^n\times \mathbb
R)$ and in $L_{p,q}(\mathbb R^n\times \mathbb R)$ spaces.

Estimate (\ref{calGij}) shows that the operator ${\cal G}_{ij}$
satisfies the assumptions of Lemmas \ref{L_p} and \ref{L_p_infty}
with $m=1$, $r=1$, $\lambda_1=-\varepsilon$ and $\lambda_2=1$.
(we recall that the term $y_n^{-1}$ corresponds a particular case
$\mu=1$)
Therefore, under condition (\ref{mueps}) this operator is bounded
in $L_p(\mathbb R^n_+\times\mathbb R)$ and in $\widetilde
L_{p,\infty}(\mathbb R^n_+\times \mathbb R)$.
Since $\varepsilon$ is
arbitrarily small, this is true under condition (\ref{Mart2a}).
Generalized Riesz--Thorin theorem, see, e.g., \cite[1.18.7]{Tr},
shows that the operator ${\cal G}_{ij}$ is bounded in $\widetilde
L_{p,q}(\mathbb R^n_+\times \mathbb R_+)$ for any $q\ge p$. For
$q<p$ the statement follows by duality arguments.

Further, by Lemma \ref{weak_2}, the operator ${\cal G}_{ij}$
satisfies the assumptions of Theorem 3.8 in \cite{BIN}. Therefore,
this operator is bounded in $L_{p,q}(\mathbb R^n_+\times \mathbb
R)$ for any $q\in\,(1,p]$. For $q>p$ the statement follows by
duality arguments.

Finally, to estimate $\partial_t u$ and $D_nD_nu$, we rewrite the
equation (\ref{Jan1}) as
\begin{equation}\label{heat}
\partial_tu-a^{nn}\Delta u=\widetilde a^{ij}D_iD_ju+f,
\end{equation}
where $\widetilde a^{ij}(t)=a^{ij}(t)-a^{nn}(t)\delta^{ij}$. After
the change of variable $\tau=\int_0^ta^{nn}(s)\,ds$, equation
(\ref{heat}) becomes
$$
\partial_\tau u-\Delta u=\widetilde f,
$$
where
$$
|\!|\!|x_n^\mu\widetilde f|\!|\!|_{p,q}\le C\ |\!|\!|x_n^\mu
f|\!|\!|_{p,q}, \qquad \|x_n^\mu\widetilde f\|_{p,q}\le C\
\|x_n^\mu f\|_{p,q}.
$$
Now  estimate (\ref{est_halfspace}) follows from \cite[Theorem
7.6]{Na}.
\end{proof}

\section{Solvability of linear and quasilinear Dirichlet problems}\label{solv}

Let $\Omega$ be a bounded domain in $\mathbb R^n$ with boundary
$\partial\Omega$. For a cylinder $Q=\Omega\times(0,T)$, we denote by
$\partial'Q=\{\partial\varOmega\times(0,T)\}\cup
\{\overline{\varOmega} \times \{0\}\}$ its parabolic boundary.

We introduce two scales of functional spaces: ${\mathbb L}_{p,q,(\mu)}(Q)$ and
$\widetilde{\mathbb L}_{p,q,(\mu)}(Q)$,
with norms
$$
{\pmb \|}f{\pmb \|}_{p,q,(\mu),Q}=
\Vert (\widehat d(x))^{\mu}f\Vert_{p,q,Q}=
\Big (\int\limits_0^T\Big (\int\limits_\Omega
(\widehat d(x))^{\mu p}|f(x,t)|^pdx\Big )^{q/p}dt\Big )^{1/q}
%\big\|\,\|(\widehat d(x))^{\mu}f\|_{p,\Omega}\big\|_{q,(0,T)}
$$
and
$$
\pmb {|\!|\!|}f\pmb {|\!|\!|}_{p,q,(\mu),Q}=
|\!|\!| (\widehat d(x))^{\mu}f|\!|\!|_{p,q,Q}=
\Big (\int\limits_\Omega\Big
(\int\limits_0^T (\widehat d(x))^{\mu q}|f(x,t)|^qdt\Big )^{p/q}dx\Big)^{1/p}
%\big\|\,\|(\widehat d(x))^{\mu}f\|_{q,(0,T)}\big\|_{p,\Omega},
$$
respectively, where $\widehat d(x)$ stands for the distance from
$x \in \Omega$ to $\partial\Omega$. For $p=q$ these spaces coincide, and we write
${\mathbb L}_{p,(\mu)}(Q)$.

We denote by ${\mathbb W}^{2,1}_{p,q,(\mu)} (Q)$ and $\widetilde
{\mathbb W}^{2,1}_{p,q,(\mu)} (Q)$ the set of functions with the
finite seminorms
$$
{\pmb \|}\partial_tu {\pmb \|}_{p,q,(\mu),Q}+\sum_{ij}{\pmb \|}
D_iD_ju {\pmb \|} _{p,q, (\mu),Q}
$$
and
$$
\pmb{|\!|\!|} \partial_tu \pmb{|\!|\!|}_{p,q, (\mu),Q}+\sum_{ij}
\pmb{|\!|\!|} D_iD_ju \pmb{|\!|\!|}_{p,q, (\mu),Q}
$$
respectively. These seminorms become norms on the subspaces
defined by $u|_{\partial'Q}=0$.

We say $\partial\Omega \in {\cal W}^2_{p,(\mu)}$ if for any point
$x^0\in\partial\Omega$ there exists a neighborhood $\cal U$  and a
diffeomorphism $\Psi$ mapping ${\cal U}\cap\Omega$ onto the
half-ball $B_1^+$ and satisfying
$$(\widehat d(x))^{\mu}D^2\Psi\in L_p({\cal U}\cap\Omega);\qquad
x_n^\mu D^2\Psi^{-1}\in L_p(B_1^+),$$ where corresponding norms are
uniformly bounded with respect to $x^0$.

It is well known (see, e.g., \cite{Li} and \cite[Lemma 2.6]{KyKr})
that if $\partial\Omega \in {\cal C}^{1,\delta}$, $\delta\in[0,1]$,
then $\partial\Omega \in {\cal W}^2_{\infty,(1-\delta)}$. Moreover,
in this case corresponding diffeomorphisms $\Psi,\Psi^{-1}\in{\cal
C}^{1,\delta}$. Here ${\cal C}^{1,0}$ stads for $C^1$.\smallskip

We set $\widehat\mu(p,q)=1-\frac{n}{p}-\frac{2}{q}$.

\subsection{Linear Dirichlet problem in bounded domains}

We consider the initial-boundary value problem
\begin{equation}\label{DesS4}
{\cal L}u\equiv \partial_tu-a^{ij}(x,t)D_iD_ju+b^i(x,t)D_iu=f(x,t)
\ \ {\textup{in}} \ \ Q,\quad\ u|_{\partial'Q}=0,
\end{equation}
where the leading coefficients $a^{ij}\in {\cal C}(\overline{\Omega}\to
L^\infty(0,T))$ satisfy assumptions $a^{ij}=a^{ji}$ and (\ref{Jan2}).

\begin{sats}\label{linear}
Let $1<p,q<\infty$ and $\mu \in
\big(-\frac{1}{p},2-\frac{1}{p}\big)$.
\medskip

{\bf 1}. Let
$ b^i \in {\mathbb L}_{{\overline{p}},{\overline{q}},(\overline{\mu})}(Q)+
{\mathbb L}_{\infty,(\overline{\overline{\mu}})}(Q)$, where
$\displaystyle {\overline{p}}$ and $\displaystyle {\overline{q}}$
are subject to
$$\overline{p}\ge p;\quad \left[
\begin{array}{ll}
\overline{q}=q;&
\widehat\mu(\overline{p},\overline{q})>0\\
q<\overline{q}<\infty;&
\widehat\mu(\overline{p},\overline{q})=0
\end{array}\right.,
$$
while $\overline {\mu}$ and $\overline{\overline{\mu}}$ satisfy
\begin{equation}\label{mumu}
{\overline{\mu}}=\min\{\mu, \max\{\widehat\mu(p,q),0\}\};\qquad
\overline{\overline{\mu}}\le 1, \quad \overline{\overline{\mu}}<\mu+\textstyle\frac 1p.
\end{equation}
Suppose also that either $\partial\Omega \in {\cal
W}^2_{\infty,(\overline{\overline{\mu}})}$ (in the case
$\overline{\overline{\mu}}=1$ this assumption must be replaced by
$\partial\Omega \in {\cal C}^1$) or $\partial\Omega \in {\cal
W}^2_{\overline{p},(\overline{\mu})}$. Then, for any $f \in {\mathbb
L}_{p,q,(\mu)}(Q)$, the initial-boundary value problem {\rm
(\ref{DesS4})} has a unique solution $u\in {\mathbb
W}^{2,1}_{p,q,(\mu)}(Q)$. Moreover, this solution satisfies
$${\pmb \|}\partial_tu{\pmb \|}_{p,q,(\mu)}+
\sum_{ij}{\pmb \|}D_iD_ju{\pmb \|}_{p,q,(\mu)} \le C {\pmb
\|}f{\pmb \|}_{p,q,(\mu)},
$$
where the positive constant $C$ does not depend on $f$.\medskip

{\bf 2}. Let
$ b^i \in \widetilde{\mathbb L}_{{\overline{p}},{\overline{q}},(\overline{\mu})}(Q)+
{\mathbb L}_{\infty,(\overline{\overline{\mu}})}(Q)$, where
$\displaystyle {\overline{p}}$ and $\displaystyle {\overline{q}}$ are subject to
$$\overline{q}\ge q;\quad \left[
\begin{array}{ll}
\overline{p}=p;&
\widehat\mu(\overline{p},\overline{q})>0\\
p<\overline{p}<\infty;&
\widehat\mu(\overline{p},\overline{q})=0
\end{array}\right.,
$$
while $\overline {\mu}$ and $\overline{\overline{\mu}}$ satisfy
(\ref{mumu}). Suppose also that $\partial\Omega$ satisfies the same
conditions as in the part {\bf 1}. Then, for any $f \in
\widetilde{\mathbb L}_{p,q,(\mu)}(Q)$, the initial-boundary value
problem {\rm (\ref{DesS4})} has a unique solution
$u\in\widetilde{\mathbb W}^{2,1}_{p,q,(\mu)}(Q)$. Moreover, this
solution satisfies
$$\pmb{|\!|\!|} \partial_tu \pmb{|\!|\!|}_{p,q,(\mu)}+
\sum_{ij}\pmb{|\!|\!|} D_iD_ju \pmb{|\!|\!|}_{p,q,(\mu)} \le C
\pmb{|\!|\!|} f \pmb{|\!|\!|}_{p,q,(\mu)},
$$
where the positive constant $C$ does not depend on $f$.

\end{sats}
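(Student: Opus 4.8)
The plan is to reduce the linear problem in the bounded cylinder $Q=\Omega\times(0,T)$ to the half-space estimate of Theorem \ref{halfspace} by a standard localization argument, treating the lower-order term $b^iD_iu$ perturbatively. First I would establish the a priori estimate. Fix a finite cover of $\overline{\Omega}$ by neighborhoods: interior balls, where the operator $\partial_t-a^{ij}(x,t)D_iD_j$ with frozen-in-$x$ coefficients is handled by Theorem \ref{space} (the $\widetilde L_{p,q}$ case) together with the classical Krylov estimate \eqref{Jan4}, and boundary neighborhoods $\mathcal U$ where the diffeomorphism $\Psi$ flattens $\partial\Omega$. Using a partition of unity subordinate to this cover, write $v=\varphi u$ for each cut-off $\varphi$; then $v$ solves an equation of the same form with an extra right-hand side involving $D\varphi\cdot Du$ and $(D^2\varphi)u$, which are lower-order and absorbable. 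In the boundary charts, after the change of variables $y=\Psi(x)$ the weight $(\widehat d(x))^\mu$ becomes comparable to $y_n^\mu$ (since $\widehat d\asymp$ distance to the flattened boundary), and the transformed leading coefficients are again measurable in $t$, continuous in the space variable, and uniformly elliptic — the price being terms with second derivatives of $\Psi$, which is exactly where the hypothesis $\partial\Omega\in{\cal W}^2_{\overline p,(\overline\mu)}$ (or ${\cal W}^2_{\infty,(\overline{\overline\mu})}$) enters: those terms are controlled in the appropriate weighted space by the smoothness of $\Psi$.

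The next step is to freeze the space-dependence of the leading coefficients. Since $a^{ij}\in{\cal C}(\overline\Omega\to L^\infty(0,T))$, for any $\varepsilon>0$ we may choose the cover fine enough that $\mathrm{osc}_{\mathcal U}\,a^{ij}(\cdot,t)<\varepsilon$ uniformly in $t$; then $a^{ij}(x,t)D_iD_jv = a^{ij}(x^0,t)D_iD_jv + (a^{ij}(x,t)-a^{ij}(x^0,t))D_iD_jv$, and the second term has small weighted norm and is absorbed into the left-hand side. Applying Theorem \ref{halfspace} to the frozen operator in each boundary chart, and \eqref{Jan4}/Theorem \ref{space} in each interior chart, and summing, yields
\begin{equation*}
{\pmb\|}\partial_tu{\pmb\|}_{p,q,(\mu)}+\sum_{ij}{\pmb\|}D_iD_ju{\pmb\|}_{p,q,(\mu)}
\le C\,{\pmb\|}f{\pmb\|}_{p,q,(\mu)}+C\,{\pmb\|}b^iD_iu{\pmb\|}_{p,q,(\mu)}
+C\,{\pmb\|}u{\pmb\|}_{p,q,(\mu)}
\end{equation*}
(and likewise with $\pmb{|\!|\!|\cdot|\!|\!|}$). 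The lower-order term is handled by a weighted parabolic embedding: writing $b^i=b^i_1+b^i_2$ with $b^i_1\in{\mathbb L}_{\overline p,\overline q,(\overline\mu)}$ and $b^i_2\in{\mathbb L}_{\infty,(\overline{\overline\mu})}$, Hölder's inequality together with a Gagliardo–Nirenberg/interpolation inequality in the weighted spaces ${\mathbb W}^{2,1}_{p,q,(\mu)}\hookrightarrow$ (weighted $L$-space for $Du$) — this is where the numerology $\widehat\mu(p,q)=1-\frac np-\frac2q$ and the constraints \eqref{mumu} on $\overline\mu,\overline{\overline\mu}$ come from — gives ${\pmb\|}b^iD_iu{\pmb\|}_{p,q,(\mu)}\le \epsilon\,{\pmb\|}D^2u{\pmb\|}_{p,q,(\mu)}+C_\epsilon\,{\pmb\|}u{\pmb\|}_{p,q,(\mu)}$ for any $\epsilon>0$, where smallness is gained either from the absolute continuity of the norm of $b^i_1$ on small time intervals or from shrinking $T$. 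Finally the zeroth-order term ${\pmb\|}u{\pmb\|}_{p,q,(\mu)}$ is bounded by $T^{1/q}$ times a norm of $\partial_t u$ plus the initial data (which is zero) via the fundamental theorem of calculus in $t$, so it too is absorbed for $T$ small, and then extended to arbitrary $T$ by iterating over a finite number of time slabs $[0,T_1],[T_1,2T_1],\dots$. This proves the a priori estimate.

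For existence and uniqueness I would use the method of continuity. Uniqueness is immediate from the a priori estimate applied to the difference of two solutions with $f=0$. For existence, connect ${\cal L}$ to the model operator ${\cal L}_1=\partial_t-\Delta$ (whose solvability in the relevant weighted spaces on $Q$ follows from \cite[Theorem 7.6]{Na} combined with the localization-and-flattening already performed, or directly) by the family ${\cal L}_\theta=(1-\theta){\cal L}_1+\theta{\cal L}$, $\theta\in[0,1]$; each ${\cal L}_\theta$ is of the admissible form with the same ellipticity and lower-order hypotheses, so the a priori estimate holds uniformly in $\theta$, and the set of $\theta$ for which ${\cal L}_\theta:{\mathbb W}^{2,1}_{p,q,(\mu)}\cap\{u|_{\partial'Q}=0\}\to{\mathbb L}_{p,q,(\mu)}$ is onto is open and closed, hence all of $[0,1]$. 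Part {\bf 2}, with the $\widetilde{\mathbb L}_{p,q,(\mu)}$-norms, runs word for word along the same lines, invoking the $\widetilde L_{p,q}$ statements of Theorems \ref{space} and \ref{halfspace} in place of their $L_{p,q}$ counterparts, and using the embedding estimate for $b^iD_iu$ in the $\pmb{|\!|\!|\cdot|\!|\!|}$-scale, which accounts for the different relation between $\overline p$ and $p$ (now $\overline p=p$, $\overline q\ge q$). The main obstacle I anticipate is the boundary localization step in its full generality: making precise that under the bare hypothesis $\partial\Omega\in{\cal W}^2_{\overline p,(\overline\mu)}$ the transformed equation still falls within the scope of Theorem \ref{halfspace} — in particular controlling the terms $x_n^\mu(D^2\Psi)Du$ and $x_n^\mu(D^2\Psi)\nabla\Psi^{-1}\cdot D^2u$ in the weighted mixed-norm spaces, which again requires the weighted embedding inequality and a careful matching of the weight exponents $\mu,\overline\mu,\overline{\overline\mu}$ against $\widehat\mu(\overline p,\overline q)$; this is precisely the bookkeeping encoded in \eqref{mumu} and the displayed constraints on $\overline p,\overline q$.
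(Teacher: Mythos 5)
Your proposal is correct and takes essentially the same route as the paper: a partition of unity, local rectifying of the boundary, and coefficient freezing reduce the problem to the whole-space and half-space models (Theorems \ref{space} and \ref{halfspace} together with Krylov's estimate), with the lower-order terms and the boundary-flattening error terms absorbed via H\"older's inequality and weighted embedding theorems under the hypotheses on $b^i$, $\overline\mu$, $\overline{\overline\mu}$ and $\partial\Omega$. The paper simply cites the ``standard scheme'' of \cite[Ch.~IV, \S9]{LSU} for the localization, absorption, and method-of-continuity steps you spell out explicitly, so there is no substantive difference in approach.
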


\begin{Rem}
 These assertions generalize {\rm \cite[Theorem 4.2]{Na}} and {\rm \cite[Theorem 2.10]{KyKr}}.
\end{Rem}

\begin{proof}
The standard scheme, see \cite[Ch.IV, \S9]{LSU}, including partition of unity, local
rectifying of $\partial\Omega$ and coefficients freezing, reduces the proof to
the coercive estimates for the model problems to equation
(\ref{Jan1}) in the whole space and in the half-space. These
estimates are obtained in \cite[Theorem 1.1]{Kr} and our Theorems
1 and 4. By the H\"older inequality and the embedding theorems
(see, e.g., \cite[Theorems 10.1 and 10.4]{BIN}), the assumptions on $b^i$ guarantee that
the lower-order terms in (\ref{DesS4}) belong to desired weighted spaces,
${\mathbb L}_{p,q,(\mu)}(Q)$ and $\widetilde{\mathbb L}_{p,q,(\mu)}(Q)$, respectively.
By the same reasons, the requirements on $\partial\Omega$ imply $\partial\Omega\in {\cal C}^1$ and
ensure the invariance of assumptions on $b^i$ under rectifying of the boundary.
\end{proof}

\subsection{Quasilinear Dirichlet problem in bounded domains}

In this subsection, we consider the initial-boundary value problem
\begin{equation}\label{quasi}
\partial_tu-a^{ij}(x,t,u,Du)D_iD_ju + a(x,t,u,Du) = 0 \quad\mbox{in}\ \ Q,\qquad
u|_{\partial'Q}=0.
\end{equation}
We suppose that the first derivatives of the coefficients
$a^{ij}(x,t,z,\mathfrak p)$ with respect to $x$, $z$ and $\mathfrak
p$ are locally bounnded and the following inequalities hold for all
$(x;t) \in Q$, $z \in {\mathbb R}^1$ and $\mathfrak p \in{\mathbb
R}^n$ with some positive $\nu$ and $\nu_1$:
\begin{equation}\label{quasi1}
\gathered \nu |\xi|^2  \leqslant a^{ij}(x,t,z,\mathfrak p)\xi_i
\xi_j \leqslant
\nu^{-1}|\xi|^2\qquad  \forall \xi \in {\mathbb R}^n, \\
|a(x,t,z,\mathfrak p)| \leqslant \nu_1 |\mathfrak p|^2+
b(x,t)|\mathfrak p|+\Phi(x,t),\\
\left|\frac {\partial a^{ij}(x,t,z,\mathfrak p)}{\partial
\mathfrak p}
\right|\leqslant \frac {\nu_1}{1+|\mathfrak p|},\\
\left|{\mathfrak p}\cdot \frac {\partial a^{ij}(x,t,z,\mathfrak
p)}{\partial z}+ \frac {\partial a^{ij}(x,t,z,\mathfrak
p)}{\partial x}\right|\le \nu_1|\mathfrak p|+\Phi_1(x,t).
\endgathered
\end{equation}

\begin{sats}\label{quasilinear}
{\bf 1}. Let the following assumptions be satisfied:
\begin{description}
\item[{(i)}] $1<q\leqslant p<\infty$,\ \ $\widehat{\mu}(p,q)>0$,\
\ $-1/p<\mu<\widehat{\mu}(p,q)$,\ \ $\partial\Omega \in {\cal
W}^2_{p,(\mu)}$; \item[{(ii)}] functions $a^{ij}$ and $a$ satisfy
the structure conditions {\rm (\ref{quasi1})}; \item[{(iii)}]
$b,\Phi\in{\mathbb L}_{p,q,(\mu)}(Q)$; \item[{(iv)}] $\Phi_1 \in
{\mathbb L}_{p_1,q_1,(\mu_1)}(Q)$,\quad $q_1\leqslant
p_1<\infty$,\ \  $\widehat{\mu}(p_1,q_1)>\max\{\mu_1,0\}$;
\item[{(v)}] $a(\cdot,z,\mathfrak p)$ is continuous w.r.t.
$(z,\mathfrak p)$ in the norm ${\pmb \|}\cdot{\pmb
\|}_{p,q,(\mu),Q}$.
\end{description}
Then the problem {\rm (\ref{quasi})} has a solution $u\in {\mathbb
W}^{2,1}_{p,q,(\mu)}(Q)$.
\medskip

{\bf 2}. Let the following assumptions be satisfied:
\begin{description}
\item[{(i)}] $1<p\leqslant q<\infty$,\ \ $\widehat{\mu}(p,q)>0$,\
\ $-1/p<\mu<\widehat{\mu}(p,q)$,\ \ $\partial\Omega \in {\cal
W}^2_{p,(\mu)}$; \item[{(ii)}] functions $a^{ij}$ and $a$ satisfy
the structure conditions {\rm (\ref{quasi1})}; \item[{(iii)}]
$b,\Phi\in\widetilde{\mathbb L}_{p,q,(\mu)}(Q)$; \item[{(iv)}]
$\Phi_1 \in \widetilde{\mathbb L}_{p_1,q_1,(\mu_1)}(Q)$,\quad
$p_1\leqslant q_1<\infty$,\ \
$\widehat{\mu}(p_1,q_1)>\max\{\mu_1,0\}$; \item[{(v)}]
$a(\cdot,z,\mathfrak p)$ is continuous w.r.t. $(z,\mathfrak p)$ in
the norm $\pmb{|\!|\!|}\cdot\pmb{|\!|\!|}_{p,q,(\mu),Q}$.
\end{description}
Then the problem {\rm (\ref{quasi})} has a solution $u\in
\widetilde{\mathbb W}^{2,1}_{p,q,(\mu)}(Q)$.
\end{sats}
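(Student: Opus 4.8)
The plan is to derive both parts from the linear theory of Theorem \ref{linear} by a Leray--Schauder fixed point argument; I describe part~{\bf 1}, part~{\bf 2} being identical with the $\widetilde{\mathbb L}$-scale in place of the ${\mathbb L}$-scale throughout. Work on the H\"older space $X={\cal C}^{1,\beta}(\overline Q)$ with $\beta>0$ small enough that ${\mathbb W}^{2,1}_{p,q,(\mu)}(Q)$ embeds compactly into $X$ (possible since the hypotheses $\widehat\mu(p,q)>0$ and $\mu<\widehat\mu(p,q)$ force $\mu<1$). Define ${\cal T}\colon v\mapsto u$, where $u$ solves the \emph{linear} problem $\partial_tu-a^{ij}(x,t,v,Dv)\,D_iD_ju=-a(x,t,v,Dv)$ in $Q$ with $u|_{\partial'Q}=0$. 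For $v$ in a ball of $X$ the leading coefficients $a^{ij}(\cdot,v,Dv)$ lie in ${\cal C}(\overline\Omega\to L^\infty(0,T))$ with a modulus of continuity uniform over that ball (using the local Lipschitz dependence of $a^{ij}$ on $(x,z,\mathfrak p)$ built into (\ref{quasi1}) and the uniform H\"older bound on $(v,Dv)$), and $a(\cdot,v,Dv)\in{\mathbb L}_{p,q,(\mu)}(Q)$ by the growth bound in (\ref{quasi1}) together with hypothesis (iii). Hence Theorem \ref{linear} (with $b^i\equiv0$) makes ${\cal T}$ well defined and shows that ${\cal T}$ sends bounded subsets of $X$ into bounded subsets of ${\mathbb W}^{2,1}_{p,q,(\mu)}(Q)$, so ${\cal T}$ is compact on $X$; its continuity on $X$ follows from the local Lipschitz dependence of $a^{ij}$, from assumption (v), and from the growth bound via dominated convergence. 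By the Leray--Schauder principle it remains only to bound, uniformly in $\sigma\in[0,1]$, every $u\in X$ with $u=\sigma\,{\cal T}u$, that is, every solution of (\ref{quasi}) with $a$ replaced by $\sigma a$; since $|\sigma a|\le|a|$ this is the a priori estimate for (\ref{quasi}) itself.

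For this a priori estimate I would follow the classical quasilinear ladder. First, a bound $\|u\|_{L^\infty(Q)}\le M$: the exponential substitution $v^\pm=\lambda^{-1}(e^{\pm\lambda u}-1)$ with $\lambda\nu\ge\nu_1+1$ absorbs the quadratic term of (\ref{quasi1}) and yields $\partial_tv^\pm-a^{ij}D_iD_jv^\pm\le C\,(b^2+\Phi)$ with zero data on $\partial'Q$, after which a (weighted) parabolic Aleksandrov--Krylov maximum principle applies, the right-hand side belonging to the requisite integrability class thanks to (iii) and the restrictions on $\mu$ in (i). Next, interior a priori H\"older estimates for $u$ and for $Du$, together with their weighted analogues near $\partial\Omega$ obtained after straightening $\partial\Omega$ by the diffeomorphism $\Psi$ (as in the model problems behind Theorem \ref{linear}), follow from the De Giorgi--Nash--Ladyzhenskaya--Uraltseva technique of \cite[Ch.~V]{LSU}. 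This is the step where the full strength of the structure conditions (\ref{quasi1}) is needed, in particular $|\partial_{\mathfrak p}a^{ij}|\le\nu_1(1+|\mathfrak p|)^{-1}$ and the bound on $\mathfrak p\cdot\partial_za^{ij}+\partial_xa^{ij}$.

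With uniform moduli of continuity for $u$ and $Du$ now available, the leading coefficient $(x,t)\mapsto a^{ij}(x,t,u(x,t),Du(x,t))$ has controlled modulus of continuity in $x$, so the constant in the estimate of Theorem \ref{linear}, applied to the frozen equation $\partial_tu-a^{ij}(\cdot,u,Du)D_iD_ju=-a(\cdot,u,Du)$ via a partition of unity subordinate to small parabolic balls, is under control. Combining that estimate with a weighted Gagliardo--Nirenberg inequality of the type $\||Du|^2(\widehat d)^\mu\|_{p,q}\le\varepsilon\,\|(\widehat d)^\mu D^2u\|_{p,q}+C_\varepsilon$ (valid precisely because $\widehat\mu(p,q)>0$ and $\mu<\widehat\mu(p,q)$) lets one absorb the quadratic gradient term; the remaining contributions $b|Du|$ and $\Phi$ are handled by (iii), and the $\Phi_1$-type lower-order terms coming from the $x$- and $z$-derivatives of $a^{ij}$ by (iv) and the relevant embedding theorem. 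This yields $\|u\|_{{\mathbb W}^{2,1}_{p,q,(\mu)}(Q)}\le C$, the desired a priori bound, and completes part~{\bf 1}. Part~{\bf 2} is obtained by the same argument with $\widetilde{\mathbb L}$ everywhere; the roles of $p$ and $q$ in the auxiliary embedding and interpolation inequalities are interchanged, which is exactly why the hypotheses on $\overline p,\overline q$ in Theorem \ref{linear} and the standing assumption $p\le q$ here appear in the transposed form.

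I expect the main obstacle to be the a priori gradient estimate up to the boundary under the weak hypothesis $\partial\Omega\in{\cal W}^2_{p,(\mu)}$, together with the bookkeeping of weights that makes the weighted interpolation inequality and the absorption of the quadratic gradient term simultaneously admissible for every $\mu\in(-1/p,\widehat\mu(p,q))$; once these are settled, the remainder is a standard, if technical, adaptation of the classical scheme and of the linear theory developed in the earlier sections.
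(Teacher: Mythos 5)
Your proposal is correct and follows essentially the same route as the paper, which disposes of the theorem by citing the Leray--Schauder scheme of \cite[Ch.V, \S6]{LSU}, the a priori estimates of \cite{Na}, \cite{LU1}, \cite{ApN} (observing that these do not use continuity of $a^{ij}$ in $t$), and the linear solvability supplied by Theorem \ref{linear}; what you have done is unpack the content of those references. One small point you leave implicit but the paper stresses in its closing remark: the restriction $q\le p$ in part~{\bf 1} (resp.\ $p\le q$ in part~{\bf 2}) is forced specifically by the Aleksandrov--Krylov maximum-principle step, which currently requires the right-hand side to lie in the \emph{stronger} of the two mixed norms $\|\cdot\|_{p,q}$, $|\!|\!|\cdot|\!|\!|_{p,q}$ (see \cite{Na1}), and not merely by the interpolation bookkeeping you mention at the end.
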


\begin{proof}
The proof by the Leray--Schauder principle is also rather
standard, see, \cite[Ch.V, \S6]{LSU}. In the case when the leading
coefficients are continuous in $t$, these assertions were proved
in \cite[Theorem 4.3]{Na}. Corresponding a priori estimates in
\cite{Na}, see also \cite{LU1} and \cite{ApN}, do not require
continuity of $a^{ij}$ with respect to $t$, while the solvability
of the corresponding linear problem follows from Theorem
\ref{linear}.
\end{proof}

Note that in Theorem \ref{quasilinear} for $p>q$ we deal with ${\mathbb L}_{p,q,(\mu)}(Q)$ scale
while for $p<q$ we deal with $\widetilde{\mathbb L}_{p,q,(\mu)}(Q)$ scale. The reason is that all
the a priori estimates for quasilinear equations are based on the Aleksandrov--Krylov
maximum principle. Up to now this statement is proved only if the right-hand side
of the equation belongs to the space with stronger norm, see \cite{Na1}.

\section{Appendix. Estimates of some integral operators}\label{int-oper}

In this section we denote $x=(x',x'')$ where $x'\in{\mathbb
R}^{n-m}$, $x''\in{\mathbb R}^{m}$, $1\le m\le n$. Also we use the
notation
$$
R_{x}=\frac{|x''|}{|x''|+\sqrt{t-s}};\quad R_{y}=\frac{|y''|}
{|y''|+ \sqrt{t-s}}.
$$
The following two lemmas are generalizations of \cite[Lemmas 2.1 and 2.2]{Na},
where they are proved for $r=2$.
\begin{lem}\label{L_p}
Let $1< p<\infty$, and let the kernel ${\cal K}(x,y,t,s)$
satisfy for $t>s$ the inequality
\begin{equation}\label{kernel_k}
|{\cal K}(x,y;t,s)|\le C\,\frac {R_{x}^{\lambda_1+r}
R_{y}^{\lambda_2}}{(t-s)^{\frac {n+2-r}2}}\,
\frac{|x''|^{\mu-r}}{|y''|^{\mu}}\,\exp
\left(-\frac{\sigma|x-y|^2}{t-s} \right),
\end{equation}

\noindent where $\sigma>0$, $0<r\le 2$,
${\lambda_1}+{\lambda_2}>-m$,
\begin{equation}\label{mu_m}
-\frac mp-\lambda_1<\mu<m-\frac mp+\lambda_2.
\end{equation}
Then the integral operator ${\cal K}$, corresponding to the kernel
{\rm (\ref{kernel_k})}, is bounded in $L_p(\mathbb R^n\times\mathbb R)$.
\end{lem}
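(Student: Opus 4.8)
The plan is to reduce the boundedness of $\mathcal K$ on $L_p(\mathbb R^n\times\mathbb R)$ to a one–dimensional weighted estimate in the $x''$–variable, handling the $x'$ and $t$ variables by crude convolution bounds that cost nothing. First I would freeze $t-s=\theta>0$, split $x=(x',x'')$ and $y=(y',y'')$, and integrate out the Gaussian in the $x'-y'$ direction: since $\int_{\mathbb R^{n-m}}\theta^{-(n-m)/2}\exp(-\sigma|x'-y'|^2/\theta)\,dx'$ is a constant, the action of $\mathcal K$ in the primed variables is dominated by a bounded convolution operator, so by Young's/Minkowski's inequality it suffices to bound, for each fixed $t,s$, the operator on $L_p(\mathbb R^m_{x''})$ with kernel
\begin{equation*}
\mathcal K_\theta(x'',y'')=\frac{R_x^{\lambda_1+r}R_y^{\lambda_2}}{\theta^{(m+2-r)/2}}\,\frac{|x''|^{\mu-r}}{|y''|^{\mu}}\,\exp\Big(-\frac{\sigma|x''-y''|^2}{\theta}\Big),
\end{equation*}
and then recombine over $t$ via another application of Minkowski's integral inequality (the kernel decays like $\theta^{-1}$ times a scale-invariant factor after the rescaling below, so the $ds$-integral converges). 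Thus the heart of the matter is a scale-invariant weighted inequality on $\mathbb R^m$.

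Next I would exploit homogeneity: substitute $x''=\sqrt\theta\,\xi$, $y''=\sqrt\theta\,\eta$. Because $r>0$ the powers of $\theta$ balance exactly (this is where the exponent $\tfrac{n+2-r}{2}$ in \eqref{kernel_k} is used), and $R_x=\tfrac{|\xi|}{|\xi|+1}$, $R_y=\tfrac{|\eta|}{|\eta|+1}$ become functions of $\xi,\eta$ alone. So one is reduced to proving that the operator on $L_p(\mathbb R^m)$ with kernel
\begin{equation*}
\mathcal H(\xi,\eta)=\Big(\frac{|\xi|}{|\xi|+1}\Big)^{\lambda_1+r}\Big(\frac{|\eta|}{|\eta|+1}\Big)^{\lambda_2}\,\frac{|\xi|^{\mu-r}}{|\eta|^{\mu}}\,\exp(-\sigma|\xi-\eta|^2)
\end{equation*}
is bounded. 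Here I would apply Schur's test with power weights: choose a test function $|\xi|^{-a}$ for a suitable $a\in(0,m)$ and verify
\begin{equation*}
\int_{\mathbb R^m}\mathcal H(\xi,\eta)\,|\eta|^{-a}\,d\eta\le C\,|\xi|^{-a},\qquad \int_{\mathbb R^m}\mathcal H(\xi,\eta)\,|\xi|^{-a+?}\,d\xi\le C\,|\eta|^{-?},
\end{equation*}
splitting each integral into the near-origin region, the bounded annulus, and the far region $|\eta|\gtrsim1$ (or $|\xi-\eta|\gtrsim1$) where the Gaussian gives super-polynomial decay and all weights are harmless. In the near-origin regime the factors $\tfrac{|\xi|}{|\xi|+1}\sim|\xi|$ supply the extra powers $|\xi|^{\lambda_1+r}$, $|\eta|^{\lambda_2}$, and local integrability of $|\xi|^{\mu-r+\lambda_1+r}=|\xi|^{\mu+\lambda_1}$ near $0$, together with that of $|\eta|^{-\mu+\lambda_2}$ and the balancing of exponents, is exactly what \eqref{mu_m} and $\lambda_1+\lambda_2>-m$ encode (after pairing with the $p$-dependent weight from Schur, the condition $-\tfrac mp-\lambda_1<\mu<m-\tfrac mp+\lambda_2$ translates into $a$ lying in a nonempty interval).

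The main obstacle I anticipate is bookkeeping the three regimes in Schur's test and pinning down the admissible Schur exponent $a$ so that \emph{simultaneously} both integrals converge — i.e. checking that the interval of valid $a$ forced by \eqref{mu_m} is nonempty, and that the $R_x,R_y$ factors (which behave like $1$ for large argument and like the argument for small argument) do not spoil integrability at the "matching" scale $|\xi|\sim|\eta|\sim1$. Once the Schur estimate on $\mathbb R^m$ is in place, undoing the rescaling, restoring the $x'$-convolution by Minkowski, and integrating in $s$ is routine; this mirrors the proof of \cite[Lemma 2.1]{Na} for $r=2$, the only new point being that the argument goes through verbatim for any $r\in(0,2]$ since $r$ enters only through the (automatically balanced) homogeneity.
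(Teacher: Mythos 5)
Your reduction to a fixed--$\theta$ problem on $L_p(\mathbb R^m_{x''})$ breaks down at the last step, the ``recombine over $t$ via Minkowski'' claim. After the rescaling $x''=\sqrt\theta\,\xi$, $y''=\sqrt\theta\,\eta$, the kernel $\mathcal K_\theta$ is exactly a dilation of the fixed kernel $\mathcal H$, so the $L_p(\mathbb R^m)\to L_p(\mathbb R^m)$ operator norm of $\mathcal K_\theta$ is \emph{exactly} $\theta^{-1}\,\|\mathcal H\|_{L_p\to L_p}$ by scale invariance of the Lebesgue measure. No choice of Schur weight $|\xi|^{-a}$ can change this $\theta$-dependence, because the rescaling argument is an isometry up to the factor $\theta^{-1}$. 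Consequently Minkowski's inequality in the $s$-variable produces the divergent integral $\int_0^\infty \theta^{-1}\,d\theta$, and the argument cannot close.

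The paper avoids this obstacle precisely by \emph{not} decoupling the $s$-integral. It applies H\"older's inequality jointly in $(y,s)$ with auxiliary parameters $\gamma_1,\gamma_2$ (satisfying $-m/p<\gamma_1<\lambda_1+\mu$ and $0<\gamma_2<m/p'+\lambda_2-\mu$). In the resulting dual integral $I_3$, integrating first in $y''$ produces a factor $(|x''|+\sqrt{t-s})^{-\mu p'}$, and the subsequent $s$-integral converges at $\theta\to\infty$ only because of the strict inequality $\gamma_1<\lambda_1+\mu$ -- i.e.\ the Schur-type exponent supplies the extra decay at large $\theta$ that the weights alone do not give for each fixed $\theta$. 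Likewise the second integral $I_4$ converges at $\theta\to\infty$ because $\gamma_2>0$. Your version of Schur's test lives entirely on the scale-invariant kernel $\mathcal H$ and therefore cannot see this interaction between the $\theta$-scale and the $x''$-scale. A concrete instance of the failure: take $m=n$, $r=2$, $\lambda_1=\lambda_2=\mu=0$; then $\|\mathcal K_\theta\|_{L_p\to L_p}\asymp \theta^{-1}$, yet the lemma's hypotheses hold and the conclusion is true -- the boundedness is saved by the joint estimate, not by summing fixed-time operator norms. To repair your argument you would have to keep the $s$-integral coupled to the $y''$-integral (and later the $x''$-integral), which brings you back essentially to the paper's Hölder computation of $I_3$ and $I_4$.
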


\begin{proof} By (\ref{mu_m}) there exist numbers  $\gamma_1$ and $\gamma_2$ such that
\begin{equation}\label{gamma}
-\frac mp<\gamma_1<\lambda_1+\mu, \qquad 0<\gamma_2<\frac
m{p'}+\lambda_2-\mu\, .
\end{equation}
Let $h\in L_p$. Applying (\ref{kernel_k}) and the H\"older
inequality, we have
\begin{multline}\label{est_l_p}
|({\cal K}h)(x,t)| \le C\,\biggl(\,\int\limits_{-\infty}^{t}\int\limits_ {{\mathbb
R}^n} \exp \left(-\frac{\sigma|x-y|^2}{t-s} \right)\frac
{|h(y,s)|^p\ R_{x}^{\gamma_1 p+r}\,R_{y}^{\gamma_2p}}
{|x''|^{(r-\mu) p}(t-s)^{\frac {n+2-r}2}}\ dyds\biggr)^{\frac{1}{p}}\\
\times\biggl(\,\int\limits_{-\infty}^{t}\int\limits_{{\mathbb R}^n}
\exp \left(-\frac{\sigma|x-y|^2}{t-s} \right)\frac {{\cal
R}_{x}^{(\lambda_1-\gamma_1)p'+r}{\cal
R}_{y}^{(\lambda_2-\gamma_2)p'}} {|y''|^{\mu p'}(t-s)^{\frac
{n+2-r}2}}\ dy ds \biggr)^{\frac{1}{p'}}.
\end{multline}
Let us denote by $I_3$ the last integral over
$(-\infty,t)\times\mathbb R^n$. Using the change of variable
$y=x-z\sqrt{t-s}$ in $I_3$ and, in the case $m<n$, integrating
there with respect to $z'$  after straightforward calculations  we
obtain
$$I_3=\int\limits_{0}^{t}\frac {R_{x}^{(\lambda_1-\gamma_1)p'+r}}
{(t-s)^{1-r/2}}\int\limits_{{\mathbb R}^m} \frac
{\exp(-\sigma|z''|^2)
|x''-z''\sqrt{t-s}|^{(\lambda_2-\gamma_2-\mu)p'}\ dz''}{\left(
|x''-z''\sqrt{t-s}|+\sqrt{t-s}\right)^{(\lambda_2-\gamma_2)p'}}\
ds.$$ By (\ref{gamma}) the  integral over $\mathbb R^m$ is
absolutely convergent and it is estimated by
$C(|x''|+\sqrt{t-s})^{-\mu p'}$. Therefore,
\begin{equation}\label{Mars3a}
I_3\le C\int\limits_{-\infty}^{t}\frac
{|x''|^{(\lambda_1-\gamma_1)p'+r}\ ds}
{(|x''|+\sqrt{t-s})^{(\lambda_1+\mu-\gamma_1)
p'+r}(t-s)^{1-r/2}}\leq C\,|x''|^{r-\mu p'}.
\end{equation}
We used here that the integral is absolutely convergent, since
$r>0$ and $\lambda_1+\mu-\gamma_1>0$ by (\ref{gamma}). Applying
this inequality for estimating the right-hand side in
(\ref{est_l_p}), we obtain
\begin{multline*}
\int\limits_{-\infty}^{\infty}\int\limits_{\mathbb R^n}|({\cal
K}h)(x,t)|^p\ dx dt\le
C\int\limits_{-\infty}^{\infty}\int\limits_{\mathbb R^n}|h(y,s)|^p
\ dy ds\\
\times
\sup\limits_{y,s}\int\limits_{s}^{\infty}\int\limits_{{\mathbb
R}^n} \exp \left(-\frac{\sigma|x-y|^2}{t-s} \right)\frac {{\cal
R}_{x}^{\gamma_1 p+r} R_{y}^{\gamma_2p}} {|x''|^{r}(t-s)^{\frac
{n+2-r}2}}\ dx dt.
\end{multline*}
Denote by $I_4$ the last integral over $(s,\infty)\times\mathbb R^n$.
Using the change of variable $y=x-z\sqrt{t-s}$ in $I_4$ and,
in the case $m<n$, integrating there with respect to $z'$,   we
obtain
$$
I_4=\int\limits_{s}^{\infty}\frac
{R_{y}^{\gamma_2p}}{(t-s)^{1-r/2}} \int\limits_{{\mathbb R}^m}
\frac {\exp(-\sigma|z''|^2) |y''-z''\sqrt{t-s}|^{\gamma_1 p}\
dz''} {\left(|y''-z''\sqrt{t-s}|+\sqrt{t-s}\right)^{\gamma_1
p+r}}\ dt.
$$
By (\ref{gamma}),  the  integral over $\mathbb R^m$ is absolutely
convergent and it is estimated by $C(|y'|+\sqrt{t-s})^{-r}$.
Therefore,
$$I_4\le C\int\limits_{s}^{\infty}\frac {|y''|^{\gamma_2 p}\ dt}
{(|y''|+\sqrt{t-s})^{\gamma_2 p+r}(t-s)^{1-r/2}}\leq C.$$ This
completes the proof.
\end{proof}

\begin{Rem}. {\rm Lemma \ref{L_p}} is also true  in the case $p=1$ or $p=\infty$.
The proof repeats with evident changes the proof presented above.
\end{Rem}

\begin{lem}\label{L_p_infty}
Under assumptions of Lemma \ref{L_p}, the operator ${\cal K}$ is bounded
in $\widetilde L_{p,\infty}(\mathbb R^n\times\mathbb R)$.
\end{lem}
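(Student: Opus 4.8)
The plan is to imitate the proof of Lemma \ref{L_p}, taking advantage of the fact that the $\widetilde L_{p,\infty}$-norm only involves $H(y):=\sup_\tau|h(y,\tau)|$, which does not depend on the time variable.

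First I would reproduce verbatim the chain of estimates from the proof of Lemma \ref{L_p} up to and including (\ref{Mars3a}): choosing $\gamma_1,\gamma_2$ as in (\ref{gamma}) and applying (\ref{kernel_k}) with H\"older's inequality in the variables $(y,s)$ gives (\ref{est_l_p}), and the bound $I_3\le C|x''|^{r-\mu p'}$ holds as before. Raising (\ref{est_l_p}) to the power $p$, inserting this bound, and collecting the powers of $|x''|$ (the exponent $rp/p'-\mu p$ produced by $I_3^{p/p'}$ combines with the $-(r-\mu)p$ already present to give exactly $-r$) yields
$$
|({\cal K}h)(x,t)|^p\le C\,|x''|^{-r}\int\limits_{-\infty}^{t}\int\limits_{\mathbb R^n}\exp\left(-\frac{\sigma|x-y|^2}{t-s}\right)\frac{|h(y,s)|^p\,R_{x}^{\gamma_1 p+r}R_{y}^{\gamma_2 p}}{(t-s)^{\frac{n+2-r}2}}\,dy\,ds.
$$

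The key point is now that $R_{x},R_{y}$ depend on $t$ and $s$ only through the combination $t-s$. Hence, bounding $|h(y,s)|^p\le H(y)^p$ and substituting $u=t-s$, the right-hand side becomes an integral over $(0,\infty)\times\mathbb R^n$ that is \emph{independent of $t$}; consequently $\sup_t|({\cal K}h)(x,t)|^p$ is majorized by that same expression. Integrating in $x\in\mathbb R^n$ and applying Fubini, the resulting inner integral in $(x,u)$ is exactly the integral $I_4$ estimated in the proof of Lemma \ref{L_p} (with $t-s$ renamed $u$), which was shown there to be bounded by a constant uniformly in $y$. Therefore
$$
\int\limits_{\mathbb R^n}\sup_t|({\cal K}h)(x,t)|^p\,dx\le C\int\limits_{\mathbb R^n}H(y)^p\,dy=C\,|\!|\!|h|\!|\!|_{p,\infty}^p,
$$
which is the claimed boundedness.

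The only delicate point is the well-posedness of ${\cal K}h$ for $h\in\widetilde L_{p,\infty}$: such $h$ need not decay in time, so the defining integral is not obviously absolutely convergent for every $(x,t)$. I would deal with this in the standard way, first proving the a priori estimate for $h$ bounded with compact support in space–time (where the computation above applies without any convergence issue) and then extending ${\cal K}$ to all of $\widetilde L_{p,\infty}(\mathbb R^n\times\mathbb R)$ by density. I do not expect a genuine obstacle here; the whole substance of the argument is the reduction to the integral $I_4$ already handled in Lemma \ref{L_p}, so the proof is essentially a rearrangement of that one.
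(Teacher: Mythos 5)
Your proposal is correct and follows essentially the same route as the paper: Hölder's inequality in $(y,s)$, the bound $I_3\le C|x''|^{r-\mu p'}$ from (\ref{Mars3a}), bounding $|h(y,s)|$ by $\sup_s|h(y,s)|$, observing that the remaining integrand depends on $t$ and $s$ only through $t-s$ so that the supremum over $t$ can be taken for free, and finally recognizing the $I_4$-type integral after integrating in $x$. The additional remark about well-posedness via density is a harmless side observation that the paper leaves implicit.
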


\begin{proof}
Let $h\in \widetilde L_{p, \infty}$ and let $\gamma_1$ and
$\gamma_2$ satisfy (\ref{gamma}). Using (\ref{kernel_k}) and the
H\"older inequality, we have
\begin{eqnarray*}
|({\cal K}h)(x,t)|&\le& C\,\biggl(\int\limits_{-\infty}^{t}
\int\limits_{{\mathbb R}^n} \exp \left(-\frac{\sigma|x-y|^2}{t-s}
\right)\sup\limits_{s}
|h(y,s)|^p \\
&\times& \frac{ R_{x}^{\gamma_1 p+r}\ R_{y}^{\gamma_2 p}}
{|x''|^{(r-\mu) p}(t-s)^{\frac {n+2-r}2}} \ dyds
\biggr)^{\frac{1}{p}}\cdot I_3^{\frac{1}{p'}}\, ,
\end{eqnarray*}
where $I_3$ is the same as in the previous lemma. Applying
estimate (\ref{Mars3a}), we obtain
\begin{eqnarray*}
&&\int\limits_{{\mathbb R}^n} \sup\limits_t|({\cal K}h)(x,t)|^p\ dx
\le  C\, \int\limits_{{\mathbb R}^n} \sup\limits_{s}|h(y,s)|^p\
dy\\
&\times&
\sup\limits_{y}\int\limits_{0}^{\infty}\int\limits_{{\mathbb R}^n}
\exp \left(-\frac{\sigma|x-y|^2}{\tau} \right)\frac{|x''|^{\gamma_1
p} |y''|^{\gamma_2 p}\ dxd\tau}
{\left(|x''|+\sqrt{\tau}\right)^{\gamma_1p+r}
\left(|y''|+\sqrt{\tau}\right)^{\gamma_2p}\tau^{\frac {n+2-r}2}}.
\end{eqnarray*}
The last integral is estimated in the same way as $I_4$ from the
previous lemma. Therefore, it is bounded uniformly w.r.t. $y$, and the statement follows.
\end{proof}

The next lemma is a generalization of \cite[Lemma 3.2]{Na}.
\begin{lem}\label{L_p_1}
Let $1<p<\infty$, $\sigma>0$, $\varkappa>0$, $0\le r\le 2$,
${\lambda_1}+{\lambda_2}>-m$ and let $\mu$ be subject to {\rm
(\ref{mu_m})}. Also let
 the kernel ${\cal K}(x,y,t,s)$ satisfy
 the inequality
\begin{equation}\label{kernel_k1}
|{\cal K}(x,y,t,s)|  \le C\,\frac {R_{x}^{\lambda_1+r} {\cal
R}_{y}^{\lambda_2}}{(t-s)^{\frac {n+2-r}2}}
\,\frac{|x''|^{\mu-r}}{|y''|^{\mu}}\, \left(\frac
{\delta}{t-s}\right)^{\varkappa}\!\!\, \exp
\left(-\frac{\sigma|x-y|^2}{t-s} \right),
\end{equation}
for $t>s+\delta$. Then for any $s^0>0$ the norm of the operator
$${\cal K}\ :\ L_{p,1}(\mathbb R^n\times\ (s^0-\delta,s^0+\delta))\ \to \
L_{p,1}(\mathbb R^n\times\ (s^0+2\delta,\infty))$$ does not exceed
a constant $C$ independent of $\delta$ and $s^0$.
\end{lem}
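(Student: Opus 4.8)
The plan is to reduce, by Minkowski's inequality in the time variable, to a pointwise‑in‑$t$ operator estimate, which is the ``stationary'' version of the computation already carried out in the proof of Lemma~\ref{L_p}. Let $h$ be supported in $\mathbb R^n\times(s^0-\delta,s^0+\delta)$. For $t>s^0+2\delta$ and $s$ in that interval we have $t>s+\delta$, so the bound (\ref{kernel_k1}) is available on this range, and there $t-s\asymp t-s^0$. Denote by ${\cal K}^{(s)}$ the integral operator acting in the $x$-variable only, with kernel ${\cal K}(\cdot,\cdot,t,s)$; by the convention (\ref{Feb15a}), $({\cal K}h)(x,t)=\int_{s^0-\delta}^{s^0+\delta}\big({\cal K}^{(s)}h(\cdot,s)\big)(x,t)\,ds$ for $t>s^0+2\delta$. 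By Minkowski's inequality and Tonelli's theorem,
\begin{equation}\label{Lp1min}
\int\limits_{s^0+2\delta}^{\infty}\Vert({\cal K}h)(\cdot,t)\Vert_p\,dt\le
\int\limits_{s^0-\delta}^{s^0+\delta}\Big(\,\int\limits_{s^0+2\delta}^{\infty}\big\Vert({\cal K}^{(s)}h(\cdot,s))(\cdot,t)\big\Vert_p\,dt\Big)\,ds,
\end{equation}
so it suffices to prove that for every $g\in L_p(\mathbb R^n)$ and every $s\in(s^0-\delta,s^0+\delta)$
\begin{equation}\label{Lp1goal}
\int\limits_{s^0+2\delta}^{\infty}\Vert({\cal K}^{(s)}g)(\cdot,t)\Vert_p\,dt\le C\,\Vert g\Vert_p,
\end{equation}
with $C$ independent of $s^0$, $\delta$ and $s$; inserting (\ref{Lp1goal}) into (\ref{Lp1min}) then gives the asserted bound by $C\Vert h\Vert_{p,1}$.

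To prove (\ref{Lp1goal}) I would first establish, for fixed $t>s^0+2\delta$, the estimate
\begin{equation}\label{Lp1stat}
\Vert({\cal K}^{(s)}g)(\cdot,t)\Vert_p\le\frac{C}{t-s}\,\Big(\frac{\delta}{t-s}\Big)^{\varkappa}\Vert g\Vert_p.
\end{equation}
This is exactly the estimate of Lemma~\ref{L_p} with the $s$- and $t$-integrations removed and the harmless factor $\big(\delta/(t-s)\big)^{\varkappa}$ simply carried along: one picks $\gamma_1,\gamma_2$ satisfying (\ref{gamma}) (possible by (\ref{mu_m})), splits the majorant in (\ref{kernel_k1}) and applies H\"older's inequality in $y$ exactly as in (\ref{est_l_p}); the factor playing the role of $I_3^{1/p'}$ is now a pure integral over $\mathbb R^n$, estimated after the substitution $y=x-z\sqrt{t-s}$ by the absolute convergence of the resulting $z''$-integral, which is guaranteed by (\ref{gamma}). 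Raising the outcome to the $p$-th power and integrating in $x$ via $x=y+z\sqrt{t-s}$ — again using (\ref{gamma}) for convergence — produces (\ref{Lp1stat}); the power $(t-s)^{-1}$ in front is forced by scaling, since the substitution $x=\sqrt{t-s}\,\xi$, $y=\sqrt{t-s}\,\eta$ turns the majorant in (\ref{kernel_k1}) into $(t-s)^{-(n+2)/2}\big(\delta/(t-s)\big)^{\varkappa}$ times a $(t-s)$-independent kernel in $(\xi,\eta)$.

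Finally, substituting $u=t-s$ in (\ref{Lp1goal}) and using (\ref{Lp1stat}),
$$
\int\limits_{s^0+2\delta}^{\infty}\Vert({\cal K}^{(s)}g)(\cdot,t)\Vert_p\,dt\le C\,\Vert g\Vert_p\,\delta^{\varkappa}\int\limits_{s^0+2\delta-s}^{\infty}u^{-1-\varkappa}\,du=\frac{C}{\varkappa}\,\Big(\frac{\delta}{s^0+2\delta-s}\Big)^{\varkappa}\Vert g\Vert_p\le\frac{C}{\varkappa}\,\Vert g\Vert_p,
$$
because $s<s^0+\delta$ forces $s^0+2\delta-s>\delta$; together with (\ref{Lp1min}) this proves the lemma. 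The only point requiring genuine care is the bookkeeping behind (\ref{Lp1stat}) — checking that the two auxiliary $z$-integrals converge under (\ref{gamma})/(\ref{mu_m}) and that the powers of $t-s$ come out as stated — but this is a line‑by‑line specialization of the already proved Lemma~\ref{L_p}, so it introduces no new difficulty; the hypotheses $\varkappa>0$ and $t>s+\delta$ are used only in the final elementary integration in $u$.
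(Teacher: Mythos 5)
Your proof is correct. Both routes rest on the same underlying ingredients — H\"older in $y$ with the Gaussian factor, convergence of the resulting auxiliary $z''$-integrals under (\ref{gamma})/(\ref{mu_m}), and a final elementary $t$-integral that converges because $\varkappa>0$ and $t-s>\delta$ — but the organization is genuinely different. The paper applies H\"older for fixed $(x,t,s)$, pulls the $s$-integral outside the $L_p(dx)$-norm by Minkowski only at the penultimate step, and estimates two explicit integrals $I_5,I_6$ along the way; it does not introduce the auxiliary exponents $\gamma_1,\gamma_2$, since the $\varkappa$-weight alone handles the $t$-integration and the two endpoint conditions $(\lambda_2-\mu)p'>-m$, $(\lambda_1+\mu)p>-m$ coming from (\ref{mu_m}) suffice for the spatial integrals. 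You instead apply Minkowski first, isolate a fixed-$(t,s)$ operator ${\cal K}^{(s)}$, and reduce its $L_p$-bound by parabolic scaling $x=\xi\sqrt{t-s}$, $y=\eta\sqrt{t-s}$ to the boundedness of a single $(t,s)$-independent Schur kernel on $L_p(\mathbb R^n)$, from which the factor $(t-s)^{-1}(\delta/(t-s))^{\varkappa}$ in (\ref{Lp1stat}) falls out automatically. This scaling reduction is a cleaner modular presentation — it makes transparent why the final $u$-integral appears and why $\varkappa>0$ and $t>s+\delta$ are exactly what is needed — at the modest cost of carrying the $\gamma_1,\gamma_2$ split inherited from Lemma~\ref{L_p}, which (as the paper's version shows) is actually dispensable here. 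You correctly observe that omitting the $s,t$-integrations removes the need for $r>0$, which is why your argument covers the full range $0\le r\le 2$ claimed in the lemma.
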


\begin{proof}
Let $h\in L_{p, 1}$ be supported in the layer $|s-s^0|\le\delta$.
Using (\ref{kernel_k1}) and the H\"older inequality, we have
\begin{eqnarray}\label{1st}
|({\cal K}h)(x,t)|&\le& C\,\int\limits_{0}^{t}\frac
{\delta^{\varkappa} ds} {(t-s)^{\varkappa+1-r/2}}\nonumber \\
&\times&\biggl(\ \int\limits_{\mathbb R^n} \exp
\left(-\frac{\sigma|x-y|^2} {t- s}\right)\frac {|x''|^{(\mu-r)
p}R^{(\lambda_1+r)p}_{x}\ |h(y, s)|^p} {(t- s)^{\frac n2}}\
dy\biggr)^{\frac{1}{p}}\nonumber\\
&\times&\biggl(\ \int\limits_{\mathbb R^n}\exp
\left(-\frac{\sigma|x-y|^2}{t- s} \right) \frac {{\cal
R}_y^{\lambda_2p'}}{|y''|^{\mu p'}(t- s)^{\frac n2}}\ dy
\biggr)^{\frac{1}{p'}}.
\end{eqnarray}
Denote by $I_5$ the integral in the last large brackets. Using the
change of variable $y=x-z\sqrt{t- s}$ and, in the case $m<n$,
integrating with respect to $z'$, we obtain
$$I_5=C\int\limits_{\mathbb R^m}\frac {\exp \left(-\sigma |z''|^2 \right)
|x''-z''\sqrt{t- s}|^{(\lambda_2-\mu) p'}\ dz''}
{\left(|x''-z''\sqrt{t- s}|+\sqrt{t- s}\right)^{\lambda_2p'}}\leq
C \left(|x''|+\sqrt {t- s}\right)^{-\mu p'}\!.
$$
From this estimate and (\ref{1st}), it follows that
\begin{eqnarray*}
&&\int\limits_{s^0+2\delta}^{\infty}\Vert ({\cal K}h)(\cdot,
t)\Vert_p\ dt\le C\,\int\limits_{s^0+2\delta}^{\infty} \biggl(\
\int\limits_ {\mathbb R^n}\biggl(\ \int\limits_{-\infty}^t \biggl(\
\int\limits_{\mathbb R^n}
\exp \left(-\frac {\sigma|x-y|^2}{t- s} \right)\\
&\times&\ \frac {|x''|^{(\lambda_1+\mu)p}|h(y, s)|^p\ dy}
{{\left(|x''|+\sqrt {t- s}\right)^{(\lambda_1+\mu+r)p}}\ ({t-
s})^{\frac n2}}\biggr)^{\frac{1}{p}}\frac {\delta^{\varkappa}\ ds}
{(t- s)^{\varkappa+1-r/2}}\biggr)^p dx\biggr)^{\frac{1}{p}}dt
\end{eqnarray*}
Using Minkowski inequality, we estimate the right-hand side by
\begin{eqnarray*}
&&C\,\int\limits_{ s^0+2\delta}^{\infty}\int\limits_ { s^0-\delta}^
{ s^0+\delta}\ \frac {\delta^{\varkappa}\ ds dt}
{(t-s)^{\varkappa+1-r/2}}\,\biggl(\ \int\limits_{\mathbb R^n}
\int\limits_{\mathbb R^n}\exp \left(-\frac
{\sigma|x-y|^2}{t- s} \right)\\
&\times&\, \frac {|x''|^{(\lambda_1+\mu)p}|h(y, s)|^p\ dy dx}
{{\left(|x''|+\sqrt {t- s}\right)^{(\lambda_1+\mu+r)p}}\ ({t-
s})^{\frac n2}}\biggr)^{\frac{1}{p}}\\
&\le &C\,\int\limits_{s^0-\delta}^{ s^0+\delta} \| h(\cdot,s)\|_p\
ds \int\limits_ {s^0+2\delta}^{\infty}\frac {\delta^{\varkappa} dt}
{(t-s)^{\varkappa+1-r/2}} \cdot \sup\limits_yI_6^{\frac{1}{p}},
\end{eqnarray*}
where
$$
I_6=\int\limits_{\mathbb R^n} \exp \left(-\frac
{\sigma|x-y|^2}{t-s} \right) \frac {|x''|^{(\lambda_1+\mu)p}\ dx}
{{\left(|x''|+\sqrt {t-s}\right)^{(\lambda_1+\mu+r)p}}\ ({t-
s})^{\frac n2}}.
$$
In order to estimate $I_6$, we apply the change of variables
$x=z\sqrt{t- s}$ and $y=w\sqrt{t- s}$ and, in the case $m<n$,
integrate with respect to $z'$. This leads to
$$I_6=\frac C{(t- s)^{rp/2}}\,\int\limits_{\mathbb R^m}\frac
{\exp\left(-\sigma |z''-w''|^2 \right)\ |z''|^{(\lambda_1+\mu)p}\
dz''} {(|z''|+1)^{(\lambda_1+\mu+r)p}}\le C\,(t- s)^{-rp/2}.
$$
Thus,
$$\int\limits_{ s^0+2\delta}^{\infty}\Vert ({\cal K}h)(\cdot, t)\Vert_p\ dt
\le C\,\Vert h\Vert_{p,1}\, \sup\limits_{|s- s^0|<\delta}
\int\limits_ { s^0+2\delta}^{\infty}\frac {\delta^{\varkappa} dt}
{(t-s)^{1+\varkappa}}\le C\,\Vert h\Vert_{p,1},
$$
which completes the proof.
\end{proof}

\vspace{6mm}

\noindent {\bf Acknowledgements.} V.~K. was supported by the
Swedish Research Council (VR). A.~N. was supported by grants NSh.227.2008.1 and RFBR 09-01-00729.
He also acknowledges the Link\"oping University for the financial support of his visit in May 2008.

\end{document}